\definecolor{darkcyan}{cmyk}{1, 0, 0, 0.6}
\newtheorem{thm}{Theorem}[section]
\newtheorem{prop}[thm]{Proposition}
\newtheorem{coro}[thm]{Corollary}
\newtheorem{lemma}[thm]{Lemma}
\theoremstyle{definition}
\newtheorem{exmp}[thm]{Example}
\newtheorem{defi}[thm]{Definition}
\theoremstyle{remark}
\newtheorem{remark}[thm]{Remark}
\numberwithin{equation}{section}
\newcommand*\dif{\mathop{}\!\mathrm{d}}
\newcommand{\second}{{\prime \prime}}
\newcommand{\R}{\mathbb R}
\newcommand{\mc}[1]{\mathcal{#1}}
\newcommand{\mr}[1]{\mathrm{#1}}
\newcommand{\bs}[1]{\boldsymbol{#1}}
\newcommand{\ms}[1]{\mathsf{#1}}
\newcommand{\msc}[1]{\mathscr{#1}}
\newcommand{\ol}[1]{\overline{#1}}
\newcommand{\wt}[1]{\widetilde{#1}}
\author{Fabio Ancona}
\address{Dipartimento di Matematica \say{Tullio Levi Civita}, Università di Padova}
\email{ancona@math.unipd.it  \textnormal{(Fabio Ancona)}}
\author{Elio Marconi}
\address{Dipartimento di Matematica \say{Tullio Levi Civita}, Università di Padova}
\email{elio.marconi@unipd.it  \textnormal{(Elio Marconi)}}
\author{Luca Talamini}
\address{Mathematics Area, SISSA, Trieste}
\email{ltalamin@sissa.it  \textnormal{(Luca Talamini)}}
\title[Quasi-entropy solution to scalar balance laws]
{On the structure of entropy dissipation and regularity for quasi-entropy solutions \\ to 1d scalar conservation laws \\ and to isentropic Euler  system with $\gamma=3$}
\begin{document}

\begin{abstract}
In this paper, we first investigate quasi-entropy solutions to scalar conservation laws in several space dimensions. In this setting, we introduce a suitable Lagrangian representation for such solutions. Next, we prove that, in one space dimension and for fluxes $f$ satisfying a general non-degeneracy condition, the entropy dissipation measures of quasi-entropy solutions are concentrated on a $1$-rectifiable set. The same result is obtained for the isentropic Euler system with $\gamma = 3$, for which we also slightly improve the available fractional regularity by exploiting the sign of the kinetic measures. 
\end{abstract}
\maketitle

\section{Introduction}
Consider a scalar conservation law
\begin{equation}\label{eq:conslaw}
    \partial_ t u + \mr{div}_x f(u) = 0,  \qquad \text{in} \quad (0, T) \times \mathbb R^d
\end{equation}
where $f: \mathbb R \to \mathbb R^d$ is a $C^2$ function. In this paper we study \emph{quasi-entropy solutions} of \eqref{eq:conslaw}. These are functions $u \in \mathbf L^{\infty}((0, T) \times \mathbb R^d)$ such that the distribution $\mu_\eta := \eta_t(u) + q_x(u)$ is a locally finite measure for every entropy-entropy flux $(\eta, \bs q)$, i.e. a pair of $C^2$ smooth functions such that $\bs q^\prime = f^\prime \eta^\prime$. In general, if $u$ is a quasi-entropy solution to \eqref{eq:conslaw}, it satisfies the balance law of the form
\begin{equation}\label{eq:scamultsource}
\begin{aligned}
    & \partial_t  u + \mathrm{div}_x f(u) = m, \qquad \text{in} \quad (0, T) \times \mathbb R^d.
    \end{aligned}
\end{equation}
where $m \in \msc M([0,T] \times \mathbb R^d)$, the source, is a locally finite measure. 
It is well known that weak solutions of \eqref{eq:conslaw} are not unique, and that the well posedness is restored only in the smaller class of \textit{entropy solutions}. These last are weak solutions to \eqref{eq:conslaw} that in addition dissipates every convex entropy:
\begin{equation}\label{eq:entropy}
    \mu_{\eta} \doteq \partial_ t \eta(u) + \mr{div}_x \bs q(u) \leq 0, \qquad \text{in $\msc D^{\prime}$}.
\end{equation}
where here $\eta$ (the entropy) is any scalar convex function.
Since \cite{Kr77} it is known that the Cauchy problem associated to \eqref{eq:conslaw} is well posed in the class of entropy solutions.

In the fundamental paper \cite{LPT_kinetic} the authors prove that entropy solutions of \eqref{eq:conslaw} satisfy a \textit{kinetic formulation}, where the collision term, which encodes the entropy production \eqref{eq:entropy}, takes the form of a derivative of a nonnegative finite measure:
\begin{equation}\label{eq:kineticintro}
\partial_t \chi + f^{\prime}(v) \cdot \nabla_x \chi = \partial_v \mu, \qquad \chi(t, x, v) = \mathbf{1}_{0 < v \leq u(t, x)}-\mathbf{1}_{u(t, x) \leq v < 0} .
\end{equation}

Although entropy solutions provide a satisfactory mathematical theory for \eqref{eq:conslaw}, more general concepts of solution are studied in the literature, in particular the class of \textit{finite entropy solutions} (or, solutions with finite entropy production) appears in many physical models: these are quasi-entropy solutions for which, in addition, $m = 0$ in \eqref{eq:scamultsource}. Being a finite entropy solution corresponds, from the kinetic formulation point of view, to allowing finite measures $\mu$ (without sign) in the right hand side of \eqref{eq:kineticintro}. For quasi-entropy solutions, an additional source term $\mu_0 \in \msc M([0,T] \times \mathbb R^d \times \mathbb R)$ must be added:
\begin{equation}\label{eq:kinmu0intro}
    \partial_t \chi + f^{\prime}(v) \cdot \nabla_x \chi = \partial_v \mu_1 + \mu_0
\end{equation}

Finite entropy solutions (therefore, quasi-entropy solutions satisfying \eqref{eq:kinmu0intro} with $\mu_0 = 0$) arise for example in the study of large deviations for scalar conservation laws \cite{bertini_al}, \cite{Mariani_large} and in the study of the $\Gamma$-convergence problem of the Aviles-Giga functional, see e.g. \cite{marconi_aviles} and the references therein. Moreover, finite entropy solutions find applications in the theory of diffusive-dispersive approximations to conservation laws and more generally whenever non-classical shock waves are present, see \cite{LeFloch2002} and the references therein.
 In \cite{HT02}   it is proved that diffusive dispersive approximations to conservation laws with general flux functions converge to finite entropy solutions, where $\mu_1$ is a general locally finite measure. It is expected that the results of this paper will be useful in the study of these solutions. More importantly for our present purposes, as it shown in Proposition \ref{prop:wzfes} later in this paper, the Riemann invariants of an entropy solution of the Euler system with $\gamma = 3$ 
\begin{equation}\label{eq:isointro}
\begin{aligned}
    & \partial_t \rho + \partial_x (\rho u) = 0, \\
    & \partial_t (\rho u)  + \partial_x (\rho u^2 + \rho^3/3) = 0
    \end{aligned}
\end{equation}
are quasi-entropy solutions to the Burgers equation
\begin{equation}\label{eq:burintro}
    \partial_t u + \partial_x \left( \frac{u^2}{2}\right) = 0
\end{equation}
if $\rho$ is bounded away from zero. This explains the need for a source term $\mu_0$ in \eqref{eq:kinmu0intro}. The system has been studied in the literature by various authors, see for example \cite{Gol23,LPT_kineticise, vasseur_gamma3,  Vas99}. In \cite{ABBCN} it is conjectured that systems of conservation laws might share some of their regularity properties with scalar balance laws. In this direction, in \cite{AMT25} it is proved that entropy solutions to $2\times 2$ systems satisfy a pair of kinetic equations with source terms, and it is proved that points outside the jump set are of \emph{vanishing mean oscillation}. Moreover, in \cite{Tal25}, these tools are further exploited to obtain strong time regularity and a decay result for entropy solutions.

In one space dimension, starting from the work of Ole{\u\i}nik \cite{Oleinik_translation}, various authors studied the regularizing effect induced by the interaction of the nonlinearity of the flux with the entropy condition \eqref{eq:entropy}. Typically, the nonlinearity of the flux forces \textit{characteristic lines} to intersect; together with the entropy condition this produces the desired regularizing effect. For example, for entropic solutions $u$ to Burgers equation, the one-sided Lipschitz estimate 
$$
\partial_x u(t, \cdot) \leq \frac{1}{t}, \qquad \text{in $\msc D^{\prime}$}
$$
induces a regularizing effect $L^{\infty}$ to $BV$. See also   \cite{ABM25, AT26, BM_structure, Junca, LPT_kineticise, LPT_kinetic, Mar_reg} for various results on the regularizing effect of nonlinearity in conservation law.  Therefore, from Vol'pert chain rule for $BV$ functions \cite{volpert_chain}, it follows that the measures $\mu_{\eta}$ are concentrated on the 1-rectifiable jump set of $u$. It has been a long standing problem to understand whether or not this concentration property holds for general fluxes and entropy solutions in one space dimension. In one space dimension, a first result \cite{DLR03} solves the problem for entropy solutions with $C^2$ fluxes for which $\{u | f^\second(u) = 0 \}$ is locally finite, and only in a recent work \cite{BM_structure} the authors prove the same concentration property for entropy solutions with general smooth fluxes. The result of \cite{BM_structure} is achieved by using in a crucial way a \textit{Lagrangian representation} for solutions of \eqref{eq:conslaw}, which is an extension of the method of characteristics in the non-smooth setting.

In \cite{marconi_burgers} it is proved that the concentration property remains valid for finite entropy solutions to one dimensional Burgers equation. The result is also achieved by using a Lagrangian representation, but a different one: one can think of it as representing characteristics at the level of the kinetic formulation \eqref{eq:kineticintro}, rather then at the level of \eqref{eq:conslaw}. This kind of representation was introduced for the first time in \cite{BBM_multid} for entropic (multi-d) solutions of \eqref{eq:conslaw} and in \cite{marconi_structure} for finite entropy solutions.

In the multidimensional case, using techniques coming from geometric measure theory, in \cite{DOW03} a partial rectifiability result for the entropy dissipation is proved. In particular, letting $\nu(t, x)$ be the $(t, x)$-marginal of the measure $|\mu|$, with $\mu$ as in \eqref{eq:kineticintro}, they prove that the set $\mathbf J \subset (0,T) \times \mathbb R^d$  of points $(t, x)$ of positive $\msc H^{d}$ density of $\nu$ is $d$-rectifiable.  However, the question of whether $\nu$ is concentrated on the jump set $\mathbf J$ remains open.

\subsection{Results and plan of the paper} The  paper is organized as follows.

In Section \ref{sec:KFFES} we discuss the equivalence between the notion of quasi-entropy solution and the kinetic formulation \eqref{eq:kinmu0intro}. The results of this section are quite standard, see for example \cite{DOW03}.

In Section \ref{sec:lagrangian} we discuss a further equivalence, the one between quasi-entropy solutions and solutions which satisfy a Lagrangian representation, in the spirit of \cite{BBM_multid, marconi_structure}.  The main theorem, Theorem \ref{thm:lagrkin}, relies on a theorem of Smirnov \cite{smirnov_currents} (see Theorem \ref{thm:smirnov}) and we prove the existence of the Lagrangian representation via a measure-theoretic reparametrization argument. In this way we can also show some additional properties of the Lagrangian representation (Theorem \ref{thm:lagraex}).

In Section \ref{sec:structurekm} we study the structure of the dissipation measures for scalar conservation laws in one space dimension, satisfying a general non-degeneracy condition on the flux $f$, Definition \ref{def:fluxgenn}. In Theorem \ref{thm:1drect}, which is the main Theorem of this section, we show that the entropy dissipation measure (which is the pushforward measure $(p_{t,x})_\sharp |\mu_1|$ of $|\mu_1|$ by the canonical projection in the $t$,$x$ variables) is concentrated on a 1-rectifiable set. The arguments are inspired by \cite{marconi_burgers}, \cite{Mar23}.

In Section \ref{sec:RBE}, Theorem \ref{thm:besov}, we prove a regularity result for quasi entropy solutions to the Burgers equation when the measure $\mu_1$ in \eqref{eq:kinmu0intro} has a positive sign. The argument relies on the Lagrangian representation of Section \ref{sec:lagrangian}, and exploits the sign of the kinetic measure $\mu_1$, slightly improving the fractional regularity available in the literature \cite{GP13} in terms of Besov spaces.

Finally, in Section \ref{sec:isentropic} we show that the all the above results can be applied to entropy solution away from vacuum of the Euler system with $\gamma = 3$.

\section{Kinetic Formulation of quasi-entropy solutions}\label{sec:KFFES}
We start by giving the definition of quasi-entropy solution. Here we consider a $C^2$ \textit{flux} function $f  : \mathbb R \to \mathbb R^d$ and an open set $\Omega \subset \mathbb R^{d+1}$. Most of the times we will take $\Omega = (0,T) \times \mathbb R$ for some $T >0$.

\begin{defi}\label{defi:fes}
    We say that $u \in \mathbf L^\infty(\Omega; \mathbb R)$ is a \textit{quasi-entropy} solution with flux $f$ if for every entropy-entropy flux pair $(\eta, \bs q)$, with $\eta \in C^2(\mathbb R)$, there exists a locally finite measure $\mu_{\eta} \in \msc M(\Omega)$ such that 
    \begin{equation}\label{eq:fenergy}
        \partial_t \eta(u) + \mr{div}_x \, \bs q(u) \, = \, \mu_{\eta} \qquad \text{in $\msc D^\prime_{t,x}(\Omega)$}.
    \end{equation}
\end{defi}
Since in particular the pair $(\eta_0, \bs q_0)$ defined by 
$
\eta_0(u) = u$, $\bs q_0(u) = f(u)
$
is an entropy-entropy flux pair, if $u$ is a quasi-entropy solution we find that $u$ solves \eqref{eq:scamultsource} with $m = \mu_{\eta_0}$.
\begin{remark}
 The more classical concept of entropy solution to the conservation law
 $$
\partial_t u + \mr{div}_x \, f(u) = 0
$$
can be obtained by requiring, in addition, that $\mu_{\eta_0} = 0$ and that for every \textit{convex} entropy $\eta$, the corresponding entropy measure $\mu_{\eta}$ is nonpositive.
\end{remark}

 In \cite{LPT_kinetic}, Lions, Perthame and Tadmor characterize entropy solutions via a \textit{kinetic formulation}. 
Here we use a slightly different formulation, that immediately follows from the one of \cite{DOW03}, which is adapted for quasi-entropy solutions. We write its proof for completeness.
\begin{prop}\label{prop:feskin}
    A function $u \in \mathbf L^\infty(\Omega; \mathbb R)$ is a quasi-entropy solution in the sense of Definition \ref{defi:fes} if and only if, 
there exist locally finite measures $\mu_0, \mu_1 \in \msc M(\Omega \times \mathbb R)$ with $\mr{supp}\, \mu_i \subset \Omega \times [\inf u, \sup u]$, such that 
\begin{equation}\label{eq:kinu}
    \partial_t \chi + f^\prime(v) \cdot \nabla_x \chi = \partial_v \mu_1 + \mu_0 \qquad \text{in $\msc D^\prime(\Omega \times \mathbb R)$}
\end{equation}
where 
\begin{equation}\label{eq:chidef}
\chi(t, x, v) = \begin{cases}
    1 & \text{if $v \leq u(t,x)$},\\
    0 & \text{otherwise}.
\end{cases}
\end{equation}
\end{prop}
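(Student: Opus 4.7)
The plan is to move between \eqref{eq:fenergy} and \eqref{eq:kinu} by testing against appropriately structured test functions. For the implication $(\Leftarrow)$, fix $\eta \in C^2(\mathbb R)$ with flux $\bs q' = f'\eta'$, pick $M > \|u\|_\infty$ and $\beta \in C_c^\infty(\mathbb R)$ with $\beta \equiv 1$ on $[-M,M]$, and for any $\varphi \in C_c^\infty(\Omega)$ test \eqref{eq:kinu} against $\Phi(t,x,v) := \varphi(t,x)\eta'(v)\beta(v)$. On the left-hand side, using $\chi = \mathbf{1}_{v\le u}$ and $\beta \equiv 1$ on the essential range of $u$, one gets $\int \chi\eta'\beta\,dv = \eta(u) + c_1$ and $\int \chi f'\eta'\beta\,dv = \bs q(u) + c_2$, with constants $c_i$ that drop after pairing with derivatives of $\varphi$; this yields $-\int[\eta(u)\partial_t\varphi + \bs q(u)\cdot\nabla_x\varphi]$. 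On the right-hand side, the support condition on $\mu_i$ (where $\beta=1$ and $\beta'=0$) reduces the pairing to $-\int\varphi\,\eta''(v)\,d\mu_1 + \int\varphi\,\eta'(v)\,d\mu_0$. Equating, $\mu_\eta$ exists as the locally finite measure $-p_\sharp(\eta''(v)\mu_1) + p_\sharp(\eta'(v)\mu_0)$, with $p$ the projection onto $\Omega$.

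For $(\Rightarrow)$, introduce the first-order distribution $T := \partial_t\chi + f'(v)\cdot\nabla_x\chi$ on $\Omega\times\mathbb R$. Using $\eta(u(t,x)) - \eta(-\infty) = \int \eta'(v)\chi(t,x,v)\,dv$ (interpreted with a lower cutoff below $\inf u$), a direct computation shows that for tensor test functions $\Phi = \varphi\otimes\psi$ with $\eta' = \psi$, one has $\langle T,\Phi\rangle = \langle\mu_\eta,\varphi\rangle$. The critical quantitative step is, for each compact $K\subset\Omega$, the uniform bound
$$
|\mu_\eta|(K) \le C_K\bigl(\|\eta'\|_\infty + \|\eta''\|_\infty\bigr), \qquad \forall\,\eta \in C^2([-M-1,M+1]),
$$
obtained by applying Banach--Steinhaus to the family $\{\eta\mapsto\langle\mu_\eta,\varphi\rangle\}_{\varphi\in C_c^\infty(K),\,\|\varphi\|_\infty\le 1}$ on the Banach space $C^2([-M-1,M+1])$: each functional is continuous via $\langle\mu_\eta,\varphi\rangle = -\int[\eta(u)\partial_t\varphi + \bs q(u)\cdot\nabla_x\varphi]$, pointwise boundedness is the quasi-entropy assumption $|\mu_\eta|(K)<\infty$, and the translation invariance $\mu_\eta = \mu_{\eta+c}$ lets one replace $\|\eta\|_{C^2}$ by $\|\eta'\|_\infty+\|\eta''\|_\infty$.

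This bound translates to $|\langle T,\varphi\otimes\psi\rangle| \le C_K\|\varphi\|_\infty(\|\psi\|_\infty + \|\psi'\|_\infty)$ for tensor test functions; a standard density argument (using that tensor sums are $C^1$-dense in $C_c^\infty(\Omega\times\mathbb R)$ and that $T$ is a distribution of order $\le 1$) extends the estimate to $|\langle T,\Phi\rangle| \le C_{K'}(\|\Phi\|_\infty + \|\partial_v\Phi\|_\infty)$ for all $\Phi \in C_c^\infty$ supported in a fixed compact set. Hence $T$ is a distribution of order $0$ in $(t,x)$ and order $\le 1$ in $v$, and Hahn--Banach together with the Riesz representation theorem applied to the isometric embedding $\Phi\mapsto(\Phi,\partial_v\Phi)\in C_0\times C_0$ produce Radon measures $\mu_0,\mu_1$ on $\Omega\times\mathbb R$ with $T = \mu_0 + \partial_v\mu_1$. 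The support condition $\mr{supp}\,\mu_i \subset \Omega\times[\inf u,\sup u]$ follows because $\chi$ is constant in $(t,x)$ outside this set (so $T$ vanishes there), and one can localize the Hahn--Banach extension accordingly.

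The main obstacle is the forward direction, in particular the Banach--Steinhaus estimate and the subsequent structural decomposition of $T$ into $\mu_0 + \partial_v\mu_1$ rather than a general first-order distribution; the latter reflects the linear dependence of $\mu_\eta$ on the pair $(\eta',\eta'') = (\psi,\psi')$, which matches precisely the form of the decomposition.
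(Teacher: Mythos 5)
Your $(\Leftarrow)$ direction and your Banach--Steinhaus step are both correct; the latter is essentially the dual of the paper's application of the uniform boundedness principle (the paper fixes $\phi$ and lets $L_\phi$ act on $\varrho=\eta''\in C(\mathbb R)$, you index by $\varphi$ and let the functionals act on $\eta\in C^2$ --- both yield the same quantitative control on $\mu_\eta$).

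The gap is the ``standard density argument.'' The tensor estimate
$|\langle T,\varphi\otimes\psi\rangle|\le C_K\,\|\varphi\|_\infty\bigl(\|\psi\|_\infty+\|\psi'\|_\infty\bigr)$
does \emph{not} upgrade by density to $|\langle T,\Phi\rangle|\le C_K\bigl(\|\Phi\|_\infty+\|\partial_v\Phi\|_\infty\bigr)$ for general $\Phi$: if $\Phi_n=\sum_i\varphi_i^n\otimes\psi_i^n\to\Phi$ in $C^1$, the tensor estimate only yields $|\langle T,\Phi_n\rangle|\le C_K\sum_i\|\varphi_i^n\|_\infty\bigl(\|\psi_i^n\|_\infty+\|(\psi_i^n)'\|_\infty\bigr)$, which is a projective-tensor-type norm of the approximant that need not be controlled by $\|\Phi_n\|_\infty+\|\partial_v\Phi_n\|_\infty$ (an injective-type norm). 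The order-$\le1$ property of $T$ guarantees $\langle T,\Phi_n\rangle\to\langle T,\Phi\rangle$ but gives no handle on passing the \emph{estimate} from tensors to limits. Consequently the Hahn--Banach$+$Riesz decomposition you invoke lacks the a priori bound it requires; that bound is true, but it is essentially equivalent to the statement being proved. The paper avoids this circularity by never decomposing the full distribution $T$: it constructs $\mu_1$ directly as the Riesz representative of the bilinear map $(\phi,\varrho)\mapsto\int\bigl(\phi_t\,\eta_\varrho(u)+\nabla_x\phi\cdot \bs q_\varrho(u)\bigr)$ with $\eta_\varrho''=\varrho$, and then identifies the remainder explicitly as $\mu_0=\mu_{\eta_0}\otimes\delta_0$ by splitting $\eta'=(\eta'-\eta'(0))+\eta'(0)$ and computing the action of $\partial_t\chi+f'\cdot\nabla_x\chi$ on each piece. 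To repair your argument you would need to reproduce this direct construction of $\mu_1$ (and the concrete formula for $\mu_0$) rather than attempt a blind dualization of $T$.
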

\begin{remark}
    Clearly the measures $\mu_0, \mu_1$ such that \eqref{eq:kinu} holds are not unique. It is possible to choose the measures $\mu_1, \mu_0$ such that their support is contained in the topological boundary of the hypograph of $u$:
    $$
    \mr{supp} \, \mu_i \subset \partial \, \mr{hyp} \, u \qquad i = 0,1,
    $$
    where
    $$
    \mr{hyp}(u)=\big\{(t,x,v)\in\Omega\times \R\,|\,
    v\leq u(t,x)\big\}.
    $$
    However, later on we will need to make a finer choice on the measures $\mu_0, \mu_1$, that in particular implies the above condition.
\end{remark}
 \begin{proof}
     We first show that if $u$ satisfies \eqref{eq:kinu}, then $u$ is a quasi-entropy solution. Let $(\eta, \bs q)$ be any entropy-entropy flux pair. Let $U \Subset \Omega$ (compactly contained in $\Omega$) and for every test function $\phi \in C^{\infty}_c(\Omega)$, with $\mr {supp} \, \phi \subset U$, we can compute, omitting the variables $(t,x)$, 
     $$
     \begin{aligned}
         \int_{\Omega} \phi_t \,\eta(u) + \nabla_x \phi \cdot \bs q(u) \dif x \dif t &  = \int_{\Omega} \int_{\mathbb R} \chi(v) \,\Big( \phi_t\eta^\prime(v)+ \nabla_x \phi\, \eta^\prime(v) \ f^\prime(v)\Big)  \dif v \dif x \dif t\\
         & = -\int_{\Omega} \int_{\mathbb R} \phi \, \eta^\prime \, \dif \mu_0(t, x, v) + \int_{\Omega} \int_{\mathbb R} \phi \, \eta^{\second} \, \dif \mu_1(t,x, v) \dif x \dif t.
     \end{aligned}
     $$
 In particular, letting $C_U = \Vert\mu_0\Vert_{\msc M( U)} + \Vert \mu_1\Vert_{\msc M(U)}$ and defining the distribution
 $$
 T_{\eta} \doteq \partial_t \eta(u) + \nabla_x \bs q(u)
 $$
 we obtain the bound
 \begin{equation}\label{eq:bTeta}
   \langle T_\eta, \, \phi\rangle  \leq C_U\, \Vert \phi\Vert_{\mc C^0} \, \Vert \eta^\prime\Vert_{\mc C^1} \qquad \forall \; \phi \in C^\infty_c(\Omega) \qquad \text{with $\mr{supp}\, \phi \subset U$}.
 \end{equation}
 By Riesz theorem, $T_{\eta}$ can be identified with a locally  finite measure.

 Conversely, assume that $u$ is a quasi entropy solution as in Definition \ref{defi:fes}. Define a distribution $T \in \msc D^\prime(\Omega \times \mathbb R)$ by 
 $$
 \langle T, \phi\, \varrho \rangle \doteq \int_{\Omega} \phi_t \eta_{\varrho}(u)\,  +\, \nabla_x \phi\cdot   \bs q_\varrho(u) \dif x \dif t \qquad \forall \phi \in C^\infty_c(\Omega), \quad \varrho \in C^\infty_c(\mathbb R)
 $$
 where we define the entropy-entropy flux pair associated to $\varrho$
 $$
 \eta_{\varrho}(v) \doteq \int_0^v \int_0^z \varrho(w) \dif w \dif z, \qquad \bs q_\varrho(v) \doteq \int_0^v f^\prime(z) \eta^\prime_\varrho(z)  \dif z.
 $$
 This definition is sufficient because finite sums $\sum_{i=1}^N \varphi_i(t,x) \varrho_i(\xi)$ are dense in $C_c^{\infty}(\mathbb R^3)$ (see e.g. \cite[Section 4.3]{FJ99})
We again consider any $U \Subset \Omega$ and for any $\phi \in \msc D(U)$ we define a linear functional $L_\phi \, : \,  C(\mathbb R) \to \mathbb R$ by 
$$
L_\phi(\varrho) \doteq   \int_{U} \phi_t \eta_{\varrho}(u)\,  +\, \nabla_x \phi\cdot   \bs q_\varrho(u) \dif x \dif t.
$$
Each functional $L_\phi$ is bounded, and therefore also continuous, since it holds
$$
|L_\phi(\varrho)| \leq C_{U, \phi} \, \Vert \varrho\Vert_{\mc C^0} \qquad \forall \varrho \in  C(\mathbb R)
$$
for some constant $C_{U, \phi}$ depending only the set $U$ and the $C^1$ norm of the function $\phi$.
Since $u$ is a quasi-entropy solution, we deduce that the family  of functionals $L_\phi$ is pointwisely bounded, because
$$
\sup_{\substack{\phi \in C^\infty_c(U) \\ |\phi|_0 \leq 1} } |L_\phi(\varrho)| \leq \int_U \dif |\mu_{\eta_\varrho}|.
$$
Therefore, by the uniform boundedness principle, the family $L_\phi$ is uniformly (norm) bounded, that is
\begin{equation}\label{eq:ubp}
    \sup_{\substack{\phi \in C^\infty_c(U) \\ |\phi|_0 \leq 1, \,|\varrho|_0 \leq 1} } |L_\phi(\varrho)| = \sup_{\substack{\phi \in C^\infty_c(U) \\ |\phi|_0 \leq 1, \,|\varrho|_0 \leq 1} } |\langle T, \phi \varrho\rangle|  \leq C_U.
\end{equation}
We also notice that by definition of $L_\phi$, it holds
\begin{equation}\label{eq:suppmu1}
L_\phi(\varrho)  = 0 \qquad \forall \varrho \in  C(\mathbb R) \quad \text{with $\mr{supp}\, \varrho \subset \mathbb R \setminus [\inf u, \sup u]$}.
\end{equation}
Combining \eqref{eq:ubp}, \eqref{eq:suppmu1} with the Riesz representation theorem, we then obtain the existence of a locally finite measure $\mu_1 \in \msc M(\Omega \times \mathbb R)$, with $\mr{supp}\, \mu_1 \subset \Omega \times [\inf u, \sup u]$, such that 
\begin{equation}\label{eq:Trho}
\langle T, \phi \varrho \rangle = L_\phi(\varrho)= \int_{\Omega} \int_{\mathbb R} \phi(t,x) \varrho(v) \dif \mu_1(t,x,v) \qquad \forall \phi \in C^\infty_c(\Omega), \quad \varrho \in C^\infty_c(\mathbb R).
\end{equation}
Finally, we calculate, for any $\phi \in C^\infty_c(\Omega)$ and $\eta^\prime \in C^\infty_c(\mathbb R)$, 
$$
\begin{aligned}
    \Big\langle \partial_t \chi + f^\prime(v) \nabla_x \chi, \, \phi \eta^\prime\Big\rangle & = \Big\langle \partial_t \chi + f^\prime(v) \nabla_x \chi, \, \phi (\eta^\prime-\eta^\prime(0)) \Big\rangle \\
    & +  \Big\langle \partial_t \chi + f^\prime(v) \nabla_x \chi, \, \phi \eta^\prime(0) \Big\rangle \\
    & = \Big\langle \partial_t \chi + f^\prime(v) \nabla_x \chi, \, \phi \int_0^v \eta^{\second} \Big\rangle \\
    & +  \Big\langle \partial_t \chi + f^\prime(v) \nabla_x \chi, \, \phi \eta^\prime(0) \Big\rangle \\
    & = -\Big\langle T, \phi \eta^\second \Big\rangle  +  \Big\langle \partial_t \chi + f^\prime(v) \nabla_x \chi, \, \phi \eta^\prime(0) \Big\rangle \\
    & = -\int_{\Omega} \int_{\mathbb R} \phi(t,x) \eta^{\second}(v) \dif \mu_1 + \eta^\prime(0) \int_{\Omega} \phi(t,x) \dif \mu_{\eta_0}
\end{aligned}
$$
where we recall that $\eta_0(u) = u$ is the trivial entropy and we used \eqref{eq:Trho} with $\varrho = \eta^{\second}$.
The result follows by setting $\mu_0 =\mu_{\eta_0} \times \delta_0$.
 \end{proof}

\section{Lagrangian Representation}\label{sec:lagrangian}
In this section we introduce the concept of \textit{Lagrangian representation} of quasi-entropy solution to the balance law \eqref{eq:conslaw}, which is the main tool that we will use throughout the rest of this chapter. This kind of representation was introduced for the first time in \cite{BBM_multid} for entropy solutions to scalar conservation laws:
\begin{equation}\label{eq:scld}
\partial_t u + \mr{div}_x \, f(u) \, = \, 0.
\end{equation}

In \cite{BBM_multid}, the authors use the Lagrangian representation to prove that continuous solutions to the conservation law \eqref{eq:scld} do not dissipate any entropy.

In \cite{marconi_structure} the Lagrangian representation is introduced also for finite entropy solutions to the conservation law \eqref{eq:scld} (i.e., no source terms are present, but the measures $\mu_{\eta}$ in \eqref{eq:entropy}  can change sign) and later on, in \cite{marconi_burgers}, it is used to prove that, in the one dimensional case and for Burgers equation $f(u) = u^2/2$, the entropy dissipation measures are concentrated on a $1$-rectifiable set. In this case the existence of a Lagrangian representation was proved via an explicit approximation procedure, similar to the transport-collapse scheme. In our setting, inspired by the method in \cite{Bon17}, where it is used for the continuity equation, we adopt a different approach based on Smirnov's Theorem on the representation of $1$-currents. This method provides finer control over the behavior of the Lagrangian curves, which will be essential in the subsequent analysis.

\subsection{Definition of the Lagrangian Representation}
In this subsection we define the Lagrangian representation of a quasi-entropy solution to the balance law \eqref{eq:conslaw}.
In the following, we denote by $\Gamma$ the space of curves
$$
\Gamma \doteq   \Big\{(I_{\gamma}, \gamma)\; \big| \;  \gamma = (\gamma^x, \gamma^v):I_{\gamma} \to \mathbb R^d_x \times \mathbb R^+_v,\; \; \gamma^x\;  \text{is Lipschitz and $\gamma^v$ is in $BV(I_\gamma)$}\Big\}
$$
where $I_{\gamma} = (t^1_{\gamma}, t^2_{\gamma})$ is the domain of definition of a curve $\gamma$.
A locally finite measure on $\Gamma$ is a measure $\bs \omega \in \mathscr M(\Gamma)$ such that $\bs \omega(K) < +\infty$ for every set $K \subset \Gamma$ of the form 
$$
 K = \Big\{  \gamma \in \Gamma \; \big| \;|\gamma^x(0)|, |\gamma^v(0)| \leq M, \quad \mathrm{Lip} (\gamma^x) \leq M, \quad \| \gamma^v\|_{BV} \leq M \Big\}.
$$

\begin{defi}\label{def:lagrangianrep}
A \textit{Lagrangian representation} (of the hypograph) of a quasi-entropy solution $u$ to \eqref{eq:conslaw} is a locally finite measure $\bs \omega \in \msc {M}^+(\Gamma)$ such that:
\begin{enumerate}
    \item it holds
    \begin{equation}\label{eq:deflagromega}
        \chi \cdot \mathscr L^{d+2} = \int_{\Gamma} (\mathrm{id}, \gamma)_{\sharp} \mathscr L^1 \llcorner I_{\gamma} \dif \bs \omega(\gamma)
    \end{equation}
    where $\chi$ is defined as in \eqref{eq:chidef}, and the measure in the right hand side is defined on test functions $\varphi \in  C_c(\mathbb R^{d+2})$ by
    $$
    \int_{\mathbb R^{d+2}} \varphi \dif \Big[ \int_{\Gamma} (\mathrm{id}, \gamma)_{\sharp} \mathscr L^1 \llcorner I_{\gamma} \dif \bs \omega(\gamma)\Big] := \int_\Gamma \Big( \int_{I_\gamma} \varphi(t,\gamma(t)) \dif t\Big) \dif \bs \omega(\gamma).
    $$
    \item $\omega$ is concentrated on the set of curves $\gamma = (\gamma^x, \gamma^v) \in \Gamma$ such that 
    \begin{equation}\label{eq:charaeqla}
    \begin{aligned}
   &  \dot{\gamma}^x = f^{\prime}(\gamma^v(t)), \qquad \text{for $\msc L^1$-a.e. $t \in I_{\gamma}$}, 
   \end{aligned}
    \end{equation}    
    \item the following integral bound holds: for every compact $K \subset \mathbb R^d$ and $T > 0$, one has
    \begin{equation}\label{eq:CKTlr}
        \begin{aligned}
     \int_{\Gamma}  \mathrm{Tot.Var.}\big(\gamma^v, \, I_\gamma \cap [0,T]\big) \cdot \mathbf 1_{\{\gamma^x(t_\gamma^1) \in K\}}(\gamma) \,\dif \bs \omega(\gamma) < C_{K,T}\\
\sum_{i=1}^2 \bs \omega \Big(\big\{ \gamma \in \Gamma \; | \; (t,\gamma^x(t^i_\gamma)) \in (0, T) \times K \big\} \Big) \leq \wt C_{K,T}
     \end{aligned}
    \end{equation}
\end{enumerate}
\end{defi}

The link between the notion of kinetic solution and Lagrangian representation is given by the following proposition.
\begin{thm}\label{thm:lagrkin}
A function $u \in \mathbf L^\infty(\Omega, \; \mathbb R)$ is a quasi-entropy solution of \eqref{eq:conslaw}  if and only if it admits a Lagrangian representation.
\end{thm}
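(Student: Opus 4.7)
By Proposition~\ref{prop:feskin} it suffices to prove the equivalence between admitting a Lagrangian representation and satisfying the kinetic formulation \eqref{eq:kinu} for some locally finite $\mu_0,\mu_1$, and I would establish the two implications separately.

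\emph{Lagrangian $\Rightarrow$ kinetic.} Given $\boldsymbol{\omega}$, test \eqref{eq:kinu} against $\phi \in C^\infty_c(\Omega \times \mathbb{R})$. Using the disintegration \eqref{eq:deflagromega} together with the characteristic equation \eqref{eq:charaeqla}, the BV chain rule applied to $t \mapsto \phi(t,\gamma(t))$ yields
$$\bigl(\partial_t + f'(\gamma^v)\cdot\nabla_x\bigr)\phi(t,\gamma(t)) = \frac{d}{dt}\phi(t,\gamma(t)) - \partial_v\phi(t,\gamma(t))\,\dot\gamma^v.$$
Integrating in $t \in I_\gamma$ and then against $\boldsymbol{\omega}$, the total-derivative term produces boundary contributions at $t^1_\gamma,t^2_\gamma$, which by the second bound of \eqref{eq:CKTlr} define a locally finite measure $\mu_0$ (supported on the curve endpoints); the $\dot\gamma^v$ term, after integration by parts in $v$ and using the first bound of \eqref{eq:CKTlr}, defines a locally finite measure $\mu_1$ with $\partial_v\mu_1$ matching the remaining contribution.

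\emph{Kinetic $\Rightarrow$ Lagrangian.} Following the strategy used in~\cite{Bon17} for the continuity equation, associate to $\chi$ and the kinetic velocity $(1,f'(v),0)$ the vector-valued measure
$$T := \chi\cdot(1,f'(v),0)\,\mathscr L^{d+2} \quad \text{on } \Omega \times \mathbb{R}.$$
Since $\chi \in \{0,1\}$ is supported in $\Omega \times [\inf u,\sup u]$ and $f'$ is bounded on this interval, $T$ has locally finite mass; a direct distributional computation gives $\partial T = -(\partial_v\mu_1 + \mu_0)$, which is also locally finite, so $T$ is a normal $1$-current. Smirnov's theorem (Theorem~\ref{thm:smirnov}) then decomposes $T$ as a superposition of elementary $1$-currents supported on Lipschitz arcs in $\Omega \times \mathbb{R}$. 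The $t$-component of the velocity of $T$ on $\mathrm{supp}(T)$ equals $\chi \in \{0,1\}$, which allows each arc to be reparametrized by $t$, producing curves of the form $(\gamma^x(t),\gamma^v(t))$ that automatically satisfy $\dot\gamma^x = f'(\gamma^v)$, i.e.\ \eqref{eq:charaeqla}. The BV regularity of $\gamma^v$ along each arc, summed against $\boldsymbol{\omega}$, corresponds via the decomposition to the finite total mass of $\mu_1$ and gives the first bound of \eqref{eq:CKTlr}; the endpoint measure of the decomposition matches $\mu_0$ and gives the second bound; and the projection of the decomposition onto the density component recovers \eqref{eq:deflagromega}.

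The main obstacle I anticipate is the fine bookkeeping of Smirnov's decomposition when $\mu_1$ is \emph{signed}: the source $\partial_v\mu_1$ forces each elementary arc in the decomposition to carry genuine $v$-oscillation rather than mere endpoint jumps, and one must ensure that these oscillations are absorbed into $\mathrm{Tot.Var.}(\gamma^v)$ along $\boldsymbol{\omega}$-a.e.\ curve, rather than being lost to closed ``cycle'' components of the decomposition. A clean way to handle this is to lift the problem to an augmented ambient space encoding a Jordan-type splitting of $\mu_1$, apply Smirnov's theorem there, and project back. A separate subtlety, handled by the positivity of the $t$-component of the velocity on $\mathrm{supp}(T)$, is promoting the a.e.\ characteristic identity into a pointwise statement along $\boldsymbol{\omega}$-a.e.\ parametrized curve, which is the technical step that makes the subsequent analysis of Sections~\ref{sec:structurekm}--\ref{sec:RBE} possible.
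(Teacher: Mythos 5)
Your first implication (Lagrangian $\Rightarrow$ quasi-entropy) is essentially the paper's argument, modulo the fact that you write $\dot\gamma^v$ as if it were a pointwise derivative, whereas $\gamma^v$ is only $BV$: one needs Vol'pert's chain rule, which splits the contribution into a diffuse part and a jump part. This is exactly how the measures $\mu_1^\gamma$ in \eqref{eq:mu1def} acquire both a $(\mathbb I,\gamma)_\sharp\widetilde D\gamma^v$ piece and an $\mathscr H^1$-on-vertical-segments piece. That omission is a sketch-level gloss, not a conceptual error.

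The second implication, however, has a genuine gap in the definition of the current. You propose
\[
T := \chi\cdot(1,f'(v),0)\,\mathscr L^{d+2},
\]
i.e.\ with vanishing $v$-component, and then assert that $\partial T = -(\partial_v\mu_1+\mu_0)$ is locally finite. This is false in general: $\mu_1$ is merely a Radon measure, so $\partial_v\mu_1$ is a distribution of order one, not a measure, and your $T$ is \emph{not} a normal current, so Theorem~\ref{thm:smirnov} does not apply. Moreover, even ignoring the normality issue, the polar vector of your $T$ has zero $v$-component on $\mathrm{supp}\,T$; by the parallelism property of Smirnov's decomposition (Remark~\ref{eq:intcurv}), the resulting curves would satisfy $\dot\gamma^v=0$, hence $\gamma^v\equiv\text{const}$, and the induced $\mu_1$ in \eqref{eq:mu1def} would vanish identically --- you cannot recover a nontrivial $\mu_1$ this way. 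The paper's fix (equation \eqref{eq:Tdef}) is to set the $v$-component of $\ms T$ equal to $-\mu_1$ itself:
\[
\ms T = \Big(\chi\,\mathscr L^{d+2},\ \chi f'(v)\,\mathscr L^{d+2},\ -\mu_1\Big),
\]
so that, after integration by parts in $v$, $\partial\ms T$ collapses to $-\mu_0$ plus the temporal boundary terms (a finite measure), and the vertical oscillation of the curves is driven precisely by the $-\mu_1$ component. Your closing remarks about ``lifting to an augmented ambient space encoding a Jordan splitting of $\mu_1$'' gesture at the problem but propose a different and unspecified mechanism; the paper's resolution lives in the same ambient space and requires no lift. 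Finally, you do not address a-cyclicity of $\ms T$, which is a hypothesis of Theorem~\ref{thm:smirnov}; the paper devotes Step~1 of the proof of Theorem~\ref{thm:lagraex} to it, using the positivity of the $t$-component of the polar vector on the absolutely continuous part together with compact $v$-support to rule out cycles sitting on the singular part.
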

We prove that if $u$ admits a Lagrangian representation, then $u$ is a quasi-entropy solution. The opposite implication is deeper: the proof, which relies on the structure of 1-dimensional normal currents in $\mathbb R^n$, is carried out in the next subsection (see Theorem \ref{thm:lagraex}). Actually, for the opposite implication, we will prove a stronger result, stating the existence of a \emph{good} Lagrangian representation (see Definition \ref{defi:goodlr})
\begin{proof}
Assume that $u$ admits a Lagrangian representation $\bs \omega$. We have to show the existence of a pair of locally finite measures $\mu_0, \mu_1 \in \msc M(\Omega \times \mathbb R)$ such that \eqref{eq:kinu} holds. To do so, we consider the distribution $$S \doteq \partial_t \chi + f^{\prime}(v) \cdot \nabla_x \chi$$ and calculate, for every test functions $\eta^\prime \in \msc D(\mathbb R)$ and $\phi \in \msc D(\Omega)$,
\begin{equation}\label{eq:Scomput}
    \begin{aligned}
-\langle S, \eta^{\prime} \phi \rangle & \; \; \; =   \int_\Omega \int_\mathbb R \eta^{\prime}(v) \chi(t, x, v) [\phi_t(t, x) + f^{\prime} (v)\cdot \nabla_x \phi (t, x)] \dif v \dif x \dif t\\
          &  \stackrel{\text{by \eqref{eq:deflagromega}}} {=}   \int_{\Gamma}  \int_{I_{\gamma}} \eta^{\prime}(\gamma^v(t)) [\phi_t(t, \gamma^x(t)) + f^{\prime} (\gamma^v(t))\cdot \nabla_x \phi (t, \gamma^x(t))] \dif t \dif \bs \omega(\gamma) \\
          & \stackrel{\text{by \eqref{eq:charaeqla}}}{=} \int_{\Gamma} \int_{I_{\gamma}} \eta^{\prime}(\gamma^v(t)) [\phi_t(t, \gamma^x(t)) + \dot \gamma^x(t)\cdot \nabla_x \phi (t, \gamma^x(t))] \dif t \dif \bs \omega(\gamma) \\
          & \; \;  \;=    \int_{\Gamma} \int_{I_{\gamma}} \eta^{\prime}(\gamma^v(t)) \frac{\dif}{\dif t} \phi(t, \gamma^x(t)) \dif t\dif \bs \omega(\gamma).
    \end{aligned}
\end{equation}
Now we compute more explicitly the integrand in the last line: in particular, since $\eta^\prime \phi$ has compact support, by (3) we deduce that for almost every $\gamma$ the function  $t \mapsto g_{\gamma}(t)\doteq \eta^{\prime}(\gamma^v(t)) \phi(t, \gamma^x(t))$ is in $BV(I_\gamma)$. Let $J_\gamma$ be the jump set of $\gamma^v$, defined by 
$$
J_\gamma \doteq \big\{ t \in I_\gamma \; | \; \gamma^v(t+) \neq \gamma^v(t-)\big\}.
$$
By Volpert's chain rule we obtain that $ D g_{\gamma} \in \msc M(I_{\gamma})$ is given by 
$$
\begin{aligned}
     D g_{\gamma}   =   \eta^{\prime}(\gamma^v(t))& \,\frac{\dif}{\dif t}\phi(t, \gamma^x(t)) \cdot \msc L^1 \,+\, \eta^{\second}(\gamma^v(t)) \phi(t, \gamma^x(t)) \cdot \wt D \gamma^v(t) \\
    & +  \sum_{t \in J_\gamma} \big(\eta^{\prime}(\gamma^v(t+))-\eta^{\prime}(\gamma^v(t-))\big) \cdot \phi(t, \gamma^x(t)) \cdot \delta_t
\end{aligned}
$$
where $\wt D\gamma^v$ denotes the diffuse part of the derivative of $\gamma^v$, see \cite[\S 3.9]{AFP_book}.
Therefore, using the chain rule in \eqref{eq:Scomput}, we obtain 
\begin{equation}\label{eq:integrandcompS}
    \begin{aligned}
    \int_{I_{\gamma}} \eta^{\prime}(\gamma^v(t)) \frac{\dif}{\dif t} \phi(t, \gamma^x(t)) \dif t & = \int_{I_{\gamma}} \dif D g_{\gamma}(t) \\
  & -\int_{I_{\gamma}} \eta^{\second}(\gamma^v(t)) \phi(t, \gamma^x(t)) \dif \wt D \gamma^v(t)\\ 
  & - \sum_{t\in J_\gamma} (\eta^{\prime}(\gamma^v(t+))-\eta^{\prime}(\gamma^v(t-))) \cdot \phi(t, \gamma^x(t)).
    \end{aligned}
\end{equation}
For every $\gamma \in \Gamma$, define the measure $\mu_1^{\gamma} \in \msc M(\Omega \times \mathbb R)$ as 
\begin{equation}\label{eq:mu1def}
    \begin{aligned}
    \mu_1^{\gamma}& \doteq -(\mathbb I , \gamma)_{\sharp} \Tilde{D} \gamma^v \\
    & - \mathscr H^1 \llcorner \{(t,x,v) \; | \; \gamma^v(t-) \leq v \leq \gamma^v(t+), \quad t \in J_\gamma\}\\
    & +\mathscr H^1 \llcorner \{(t,x,v) \; | \; \gamma^v(t+) \leq v \leq \gamma^v(t-), \quad t \in J_\gamma\},
    \end{aligned}
 \end{equation}
and the measure $\mu_0^{\gamma}\in \msc M(\Omega \times \mathbb (0,1))$ as
\begin{equation}\label{eq:mu0def}
 \mu_0^{\gamma} \doteq \delta_{(t^1_{\gamma}, \gamma(t^1_{\gamma}))} - \delta_{(t^2_{\gamma}, \gamma(t^2_{\gamma}))}.
\end{equation}
Using that 
$$
\int_{I_\gamma} \dif D g_\gamma(t) = \eta^\prime(\gamma(t_\gamma^2)) \phi(t, \gamma^x(t_\gamma^2))-  \eta^\prime(\gamma(t_\gamma^1)) \phi(t, \gamma^x(t_\gamma^1))
$$
and inserting \eqref{eq:integrandcompS} in the last line of \eqref{eq:Scomput}, we obtain, 
\begin{equation}
    \begin{aligned}
   & -\big\langle S, \eta^{\prime} \phi\big\rangle & =  \int_{\Gamma} \big\langle -\mu^{\gamma}_0, \; \eta^{\prime} \phi\big\rangle \dif \bs \omega(\gamma)  + \int_{\Gamma} \big\langle \mu^{\gamma}_1, \; \eta^{\second} \phi\big\rangle \dif \bs \omega(\gamma).
    \end{aligned}
\end{equation}
This shows that setting 
\begin{equation}\label{eq:inducedpair}
\begin{aligned}
    & \mu_1 : = \int_{\Gamma} \mu_1^{\gamma}\dif \bs \omega(\gamma), \qquad \mu_0 := \int_{\Gamma} \mu_0^{\gamma} \dif \bs \omega(\gamma)
    \end{aligned}
\end{equation}
it holds
\begin{equation}
    S = \partial_v \mu_1 + \mu_0, \qquad \text{in} \; \msc D^{\prime} (\Omega \times \mathbb R)
\end{equation}
    which proves the claim. Finally, notice that $\mu_1, \mu_0$ are locally finite measures, by assumption (3) in Definition \ref{def:lagrangianrep}. Therefore $u$ is a quasi-entropy solution by Proposition \ref{prop:feskin}.
\end{proof}

We say that a pair $(\mu_0, \mu_1) \in \mathscr M(\Omega \times \mathbb R)^2$ is induced by a Lagrangian representation if \eqref{eq:inducedpair} holds, with $\mu_1^{\gamma}, \mu_0^{\gamma}$ defined in \eqref{eq:mu1def}, \eqref{eq:mu0def}. Every Lagrangian representation induces a unique pair $(\mu_0, \mu_1)$.
We give the following definition.
\begin{defi}\label{defi:goodlr}
We say that a Lagrangian representation $\bs \omega$ is \emph{good} if the pair $(\mu_0, \mu_1) \in \mathscr M(\Omega \times \mathbb R)^2$ induced by $\bs \omega$ satisfies additionally
\begin{equation}\label{eq:optcond}
    \begin{aligned}
        \left| \mu_1 \right| = \int_{\Gamma} |\mu_1^{\gamma}| \dif \bs \omega(\gamma), \qquad \left| \mu_0 \right| = \int_{\Gamma} |\mu_0^{\gamma}| \dif \bs \omega(\gamma).
    \end{aligned}
\end{equation}
\end{defi}

Condition \eqref{eq:optcond}  says that there are no cancellations, e.g. the second condition in \eqref{eq:optcond} implies that no curves are created in the same point where others are canceled, that is, up to a negligible set of curves, for every pair of curves $\gamma, \sigma$ there holds
$$
(t_\gamma^1, \gamma(t_\gamma^1)) \neq (t_\sigma^1, \sigma(t_\sigma^1)) .
$$
The analogous condition for $\mu_1^\gamma$ means that, up to a negligible set of curves, there holds
$$
\{(t,x,v) \;  | \; \gamma^x(t) = \sigma^x(t) = x, \quad \gamma^v(t+) < v < \gamma^v(t-), \quad \sigma^v(t+) > v > \sigma^v(t-)\} = \emptyset
$$
$$
\{(t,x,v) \;  | \; \gamma^x(t) = \sigma^x(t) = x, \quad \gamma^v(t+) > v > \gamma^v(t-), \quad \sigma^v(t+) < v < \sigma^v(t-)\} = \emptyset
$$

\begin{remark}
    Not every couple $(\mu_0, \mu_1) \in \mathscr M(\Omega \times \mathbb R)^2$ is induced by a good Lagrangian representation. For example, every couple $(\mu_0, \mu_1) \neq 0 $  which corresponds to $u = $ const., i.e. $\mu_0 + \partial_v \mu_1 = 0$,  is not induced by any good Lagrangian representation.
\end{remark}

\subsection{Existence of (good) Lagrangian Representations.}
In this subsection we use a Theorem of Smirnov \cite{smirnov_currents}  about the decomposition of  1-currents into currents of the form $\llbracket\gamma\rrbracket$ associated to Lipschitz curves (see Example \ref{ex:curvents}) to derive good  Lagrangian representations for \textit{all} quasi-entropy solutions. As a byproduct, we obtain a proof of the remaining implication of Proposition \ref{thm:lagrkin}.

\subsubsection{Preliminaries About the Theory of Currents} For an introduction to the subject we refer for example to \cite{KP_book}.
The space $\mathscr D_k(\mathbb R^d)$ of \textit{$k$-dimensional currents} is the dual of the space $\mathscr D^k(\mathbb R^d)$ of all smooth $k$-differential forms with compact support in $\mathbb R^d$. 

Given a $k$-current $\ms T \in \mathscr D_k(\mathbb R^d)$, its \textit{mass} $\mathbb M(\ms T)$ is defined as 
$$
\mathbb M (\ms T) = \sup\{\langle \ms T \; , \omega\rangle \; | \; \omega \in \mathscr D^k(\mathbb R^d), \quad |\omega | \leq 1\}
$$
\begin{remark}[1-currents with finite mass]
The space of 1-currents with finite mass can be identified with Radon vector measures $\ms T \in \mathscr M(\mathbb R^d)^d$, and we will often identify a 1-current with finite mass with the associated vector measure. The duality of a 1-current  with finite mass $\ms T = (\ms T_1, \ldots, \ms T_d)$ with a vector field $\Phi = (\Phi^1, \ldots, \Phi^d)\in C^\infty_c(\mathbb R^d,\;  \mathbb R^d)$ is 
$$
\langle \ms T \; , \; \Phi\rangle =  \sum_{i=1}^n \int_{\mathbb R^d}\Phi^i \dif \ms T_i.
$$
Analogously, the space of $0$-currents with finite mass can be identified with the space of Radon measures on $\mathbb R^d$.
\end{remark}

\begin{exmp}[Rectifiable $k$-currents]
Let  a $k$-rectifiable set $M \subset \mathbb R^d$ oriented by a unit $k$-vector $\tau$, and a measurable  multiplicity function $\theta$ be given. Then we denote by $\llbracket M ,\; \tau, \; \theta\rrbracket$ the $k$-current defined by 
$$
\langle \llbracket M ,\; \tau, \; \theta\rrbracket, \; \omega \rangle = \int_M \theta(x)\langle \tau(x), \; \omega(x) \rangle \dif \msc H^k(x), \qquad \forall \; \omega \in \msc D^k(\mathbb R^d)
$$
\end{exmp}
The \textit{boundary} of a k-current $\ms T$ is a $k-1$-current $\partial \ms T \in \mathscr D_{k-1}(\mathbb R^d)$ defined as
$$
\langle \partial \ms T \; , \; \omega\rangle = \langle \ms T \; , \; d \omega\rangle, \qquad \forall \; \omega \in \mathscr D^{k-1}(\mathbb R^d)
$$
where $d : \mathscr D^{k-1}(\mathbb R^d) \to \mathscr D^{k}(\mathbb R^d)$ is the De Rham's-Cartan differential.

\begin{defi}[Normal Currents]
We say that a $k$-current $\ms T \in \mathscr D_{k}(\mathbb R^d)$ is \textit{normal} if both $\ms T$ and $\partial \ms T$ are of finite mass. 
\end{defi}

\begin{defi}[A-cyclic currents]
    Let $\ms T \in  \msc D_k(\mathbb R^d)$ be a $k$-current. We say that 
\begin{enumerate}
    \item A current $\ms C \in \msc D_k(\mathbb R^d)$ is a \textit{subcurrent} of $\ms T$, and we write $\ms C \leq \ms T$, if $\mathbb M (\ms T) = \mathbb M (\ms T-\ms C) +\mathbb M (\ms C)$.
    \item A current $\ms C \leq \ms T$ is a \textit{cycle} of $ \ms T$ if $\partial \ms C = 0$.
    \item $\ms T$ is a-cyclic if its only cycle is $\ms C = 0$.
\end{enumerate}
\end{defi}

\begin{remark}
    In general, given $\ms T, \ms C \in \msc D_k(\mathbb R^d)$, it is clear by the definition of mass of a current that it holds $\mathbb M (\ms T) \leq \mathbb M (\ms T-\ms C) +\mathbb M (\ms C)$. Therefore $\ms C$ is a subcurrent of $\ms T$ if and only if $\mathbb M (\ms T) \geq \mathbb M (\ms T-\ms C) +\mathbb M (\ms C)$. In particular, if $\ms T$ is of finite mass, $\ms C$  must be of finite mass.
\end{remark}

\begin{exmp}[Structure of 1-subcurrents]\label{rem:substru}
    Let $\ms T \in \msc D_1(\mathbb R^d)$ be a 1-current with finite mass, and let $\left\Vert \ms T\right\Vert \in \msc M^+(\mathbb R^d)$ be its total variation measure. By the polar decomposition of vector valued measures (see \cite{AFP_book}) there exists a unit measurable vector field $\vec S: \mathbb R^d  \to \mathbb R^d$, defined $\left\Vert \ms T\right\Vert$-a.e., such that $\ms T = \vec S \cdot \left\Vert \ms T \right\Vert$. It is easy to see that every subcurrent $\ms C \in \msc D_1(\mathbb R^d)$  of $\ms T$ has the form 
    $$
    \ms C = \vec S \cdot  \left\Vert \ms C \right\Vert, \qquad  \left\Vert \ms C \right\Vert \leq  \left\Vert \ms T \right\Vert \quad \text{as measures}
    $$
\end{exmp}

\begin{exmp}[$1$-currents associated with Lipschitz curves]\label{ex:curvents}
The  most important example for the following of this section is the case of  1-rectifiable currents $$\llbracket \gamma \rrbracket : = \llbracket \gamma\big((0,1)\big) ,\; \tau(x), \; \msc H^0\big(\gamma^{-1}(\{x\})\big) \rrbracket \in \msc D_1(\mathbb R^d)$$ associated with a Lipschitz curve $\gamma: [0,1] \to \mathbb R^d$, where $\tau(x)$ is the tangent vector the the graph $\gamma((0,1))$ at a point $x \in \gamma((0,1))$, defined for $\mathscr H^1$-almost every point $x \in \gamma((0, 1))$, and
 $\msc H^0\big(\gamma^{-1}(\{x\})\big)$ counts how many times the curve $\gamma$ passes through the point $x$.  The action of $\llbracket \gamma \rrbracket$ on 1-forms (which can be identified with vector fields $\Phi \in C^\infty_c(\mathbb R^d, \mathbb R^d)$) can be explicitly written as (see the Area Formula in \cite{AFP_book})  
$$
\begin{aligned}
\langle \llbracket \gamma \rrbracket, \; \Phi \rangle & = \int_{\gamma((0, 1))}\msc H^0\big(\gamma^{-1}(\{x\})\big)\cdot  \Big\langle  \frac{\dot{\gamma}}{|\dot{\gamma}|}, \; \Phi\Big\rangle \cdot \dif \msc H^1 \\
& = \int_0^1 \langle \Phi(\gamma(\tau)), \;  \dot \gamma(\tau)\rangle  \dif \tau, \qquad \forall \; \Phi \in  C^\infty_c(\mathbb R^d, \mathbb R^d)
\end{aligned}
$$
In particular $\llbracket \gamma \rrbracket$ has finite mass since 
$$
\langle \llbracket \gamma \rrbracket, \; \Phi \rangle  \leq \int_0^1 |\dot{\gamma}(\tau)| \dif \tau  \leq \mr{Lip} (\gamma), \qquad \forall \; \Phi \in \mc C_c^{\infty}(\mathbb R^d, \mathbb R^d), \quad |\Phi| \leq 1.
$$
The boundary of the 1-current $\llbracket \gamma \rrbracket \in \msc D_1(\mathbb R^d)$, satisfies
$$
\langle \partial \llbracket \gamma \rrbracket, \; \phi \rangle  = \int_0^1 \frac{\dif}{\dif \tau} \phi(\gamma(\tau)) \dif \tau = \phi(\gamma(1))-\phi(\gamma(0)), \qquad \forall \; u \in C^\infty_c(\mathbb R^d)
$$
so that $\partial \llbracket \gamma \rrbracket = \delta_{\gamma(1)}- \delta_{\gamma(0)} \in \msc M(\mathbb R^d)$. Therefore $\llbracket \gamma \rrbracket$ is a normal 1-current. Moreover, a 1-current $\llbracket \gamma \rrbracket \in \msc D_1(\mathbb R^d)$ is a cycle if and only if $\gamma(1) = \gamma(0)$, otherwise it is an a-cyclic 1-current.
\end{exmp}

\subsubsection{Smirnov's Theorem for Normal Currents}
The following Theorem of Smirnov yields a decomposition of $1$-normal currents into superposition of currents associated to Lipschitz curves.
\begin{thm}[Smirnov, \cite{smirnov_currents}]\label{thm:smirnov}
Let $U \subset \mathbb R^d$ be an open set. Let $\ms T \in \mathscr D_1(U)$ be a normal, a-cyclic 1-current in $U$. Then there exists a positive measure $\boldsymbol{\eta} \in \mathscr M^+(\mathrm{Lip}((0,1)  ;  U))$ such that 
\begin{equation}\label{eq:Tdecomp}
    \langle \ms T, \omega \rangle = \int_{\mathrm{Lip}((0,1)  ;  U)} \langle \llbracket\gamma\rrbracket, \omega\rangle \dif \boldsymbol{\eta} (\gamma), \qquad \forall \; \omega \in \mathscr D^1(U)
\end{equation}
Furthermore, the mass of $\ms T$ decomposes as 
\begin{equation}\label{eq:Mdecomp}
    \mathbb M (\ms T)  =  \int_{\mathrm{Lip}((0,1)  ; U)}  \mathbb M (\llbracket\gamma\rrbracket) \dif \boldsymbol{\eta} (\gamma)
\end{equation}
and the boundary measure $\partial T$ decomposes as 
\begin{equation}\label{eq:dTdecomp}
    (\partial T)^+ = (e_1)_{\sharp} \boldsymbol{\eta}, \qquad  (\partial T)^- = (e_0)_{\sharp} \boldsymbol{\eta}
\end{equation}
where 
$$
e_t : \mathrm{Lip}((0,1)  ;  U) \to \mathbb R^d, \qquad e_t(\gamma) = \gamma(t)
$$
is the evaluation map at a point $t \in (0, 1)$ 
and $(\partial T)^{\pm}$ denotes the positive/negative part of the measure $\partial T$.
\end{thm}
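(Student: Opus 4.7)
The plan is to construct $\boldsymbol\eta$ by an iterative extraction of subcurrents of the form $\llbracket\gamma\rrbracket$ from $\ms T$, exploiting the a-cyclicity hypothesis crucially to guarantee non-cancellative additivity. The key preparatory observation from Example \ref{rem:substru} is that the polar decomposition $\ms T = \vec S \cdot \|\ms T\|$ forces every subcurrent of $\ms T$ to share the orientation $\vec S$, so all extracted curves must be tangent to $\vec S$ in a measure-theoretic sense, and the total mass is counted additively over any decomposition into subcurrents.

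First I would build the \emph{elementary extraction step}: given (a portion of) a boundary source in $(\partial \ms T)^-$, produce a Lipschitz curve $\gamma$ emanating from it, following $\vec S$, and terminating at a point of $(\partial \ms T)^+$, such that $\llbracket\gamma\rrbracket$ is a subcurrent of $\ms T$. Since $\vec S$ is only measurable, the standard ODE flow is unavailable, and this is the main technical obstacle. The strategy is to regularize $\ms T$ by mollification into smooth normal currents $\ms T_\varepsilon$, to which the classical method of characteristics applies (yielding $\boldsymbol\eta_\varepsilon$ concentrated on integral curves of the smooth vector field $\vec S_\varepsilon$), and then to pass to the limit $\varepsilon \to 0$ in the space of Radon measures on $\mathrm{Lip}((0,1); U)$. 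Tightness follows from equi-Lipschitz bounds on the supports and control of endpoints via $(\partial \ms T_\varepsilon)^{\pm}$, while the uniform mass bound $\mathbb M(\ms T_\varepsilon) \leq \mathbb M(\ms T)$ provides total mass control for the family $\boldsymbol\eta_\varepsilon$.

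The full extraction proceeds by transfinite induction: at each stage $\alpha$ extract a maximal subcurrent $\int \llbracket\gamma\rrbracket \, \dif \boldsymbol\eta_\alpha$ from the current residual and iterate on the remainder. The a-cyclicity of $\ms T$ is inherited by any subcurrent, and is the decisive ingredient ensuring that the procedure terminates with residual $0$: a nonzero residual $\ms R$ with $\partial \ms R = 0$ would be a cycle of $\ms T$, hence zero by assumption, so any nonvanishing residual must have nontrivial boundary and hence permit further extraction. The mass identity \eqref{eq:Mdecomp} then follows from the non-cancellative nature (by definition of subcurrent) of each extraction step, and the boundary decomposition \eqref{eq:dTdecomp} follows by tracking how the endpoint measures $(e_0)_\sharp \boldsymbol\eta_\alpha$ and $(e_1)_\sharp \boldsymbol\eta_\alpha$ assemble into $(\partial \ms T)^\mp$.

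The central difficulty I anticipate is combining the non-smooth elementary step with the limiting procedure: ensuring that, after mollification and weak-$*$ limit in $\msc M^+(\mathrm{Lip}((0,1);U))$, the resulting curves remain tangent to the original unregularized vector field $\vec S$ and do not create spurious cycles in the limit. Without this care the mass additivity degrades and a-cyclicity is lost in the limit. The lower semicontinuity of mass under weak-$*$ convergence, combined with the rigidity given by the shared orientation $\vec S$ across all subcurrents, is the principal mechanism for stabilizing the limit.
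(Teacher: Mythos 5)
The paper does not prove this theorem; it is stated as a cited result of Smirnov \cite{smirnov_currents} and used as a black box throughout Section~\ref{sec:lagrangian}, so there is no in-paper proof to compare against. Viewed as a standalone outline, your sketch leaves the substantive part open. The central gap is the elementary extraction step: mollification is unsuitable here, because convolving the vector measure $\ms T$ produces smooth currents $\ms T_\varepsilon$ that are in general \emph{not} a-cyclic --- a-cyclicity is unstable under smoothing, and the convolved current can develop cycles even when $\ms T$ has none. Once a-cyclicity of the approximants is lost, some characteristics of $\vec S_\varepsilon$ close up rather than flow from $(\partial\ms T_\varepsilon)^-$ to $(\partial\ms T_\varepsilon)^+$, and a weak-$*$ limit $\boldsymbol\eta_\varepsilon\rightharpoonup\boldsymbol\eta$ may charge closed curves, which breaks both \eqref{eq:Mdecomp} and \eqref{eq:dTdecomp}. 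The shared-orientation rigidity of Example~\ref{rem:substru} constrains subcurrents of $\ms T$ itself, but neither subcurrents of $\ms T_\varepsilon$ nor weak-$*$ limits of curve measures need be subcurrents of $\ms T$, so that rigidity does not stabilize the limit in the way you propose.

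You flag this as the central difficulty, and you are right that it is; but the resolution you offer --- lower semicontinuity of mass plus orientation rigidity --- does not close it, and this is precisely where the content of Smirnov's theorem lies. The transfinite exhaustion is also incomplete as stated: to terminate it you would need a quantitative lower bound on the mass removed per extraction step, or a separate compactness mechanism, since otherwise the process can stall at a nonzero a-cyclic residual from which your elementary step extracts no curve of positive $\boldsymbol\eta$-mass. The high-level picture --- superposition of elementary curves with mass additivity enforced by the subcurrent relation, and a-cyclicity ruling out a trivial residual --- is the right one, but the hard analytic work that turns this picture into a theorem is left undone.
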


\begin{remark}\label{eq:intcurv}
    Let $\ms T = \vec S \cdot \left\Vert \ms T\right\Vert$ be the polar decomposition of the vector measure $\ms T$. From the formula of decomposition of the mass it follows that the curves on which $\boldsymbol{\eta}$ is concentrated are such that $\vec S(\gamma(\tau))$ is parallel to $\dot \gamma(\tau)$ for $\msc L^1$ a.e. $\tau$. In fact, since $\ms T$ is of finite mass, we can use $\vec S$ as a test vector field in equation \eqref{eq:Tdecomp} (see for example \cite{KP_book}). This yields
    \begin{equation}\label{eq:paral1}
    \begin{aligned}
     \mathbb M(\ms T)  = \langle \ms T, \; \vec S \rangle & = \int_{\mathrm{Lip}((0,1)  ; U)} \langle \llbracket\gamma\rrbracket, \vec S \rangle \dif \boldsymbol{\eta} (\gamma) = \int_{\mathrm{Lip}((0,1)  ; U)} \int_0^1 \langle \dot \gamma(\tau), \; \vec S(\gamma(\tau)) \rangle \dif \tau \dif \boldsymbol{\eta} (\gamma)
    \end{aligned}
    \end{equation}
    But on the other hand, by equation \eqref{eq:Mdecomp}, it also holds
    \begin{equation}\label{eq:paral2}
    \mathbb M(\ms T) = \int_{\mathrm{Lip}((0,1)  ; U)}  \mathbb M (\llbracket\gamma\rrbracket) \dif \boldsymbol{\eta} (\gamma) = \int_{\mathrm{Lip}((0,1)  ; U)} \int_0^1 |\dot \gamma(\tau)| \dif \tau \dif \boldsymbol{\eta} (\gamma)
    \end{equation}
   Since $\vec S$ is a unitary vector, it holds
   $$
   \langle \dot \gamma(\tau), \; \vec S(\gamma(\tau)) \rangle \leq |\dot \gamma(\tau)| , \qquad \text{for $\msc L^1$ a.e. $\tau \in (0, 1)$}
   $$
    Therefore combining \eqref{eq:paral1}, \eqref{eq:paral2} one obtains that $\dot{\gamma}(\tau)$ must be parallel to $\vec S$ for $\mc L^1$ a.e. $\tau \in (0,1)$, for $\bs \eta$ a.e. $\gamma \in \Gamma$.
\end{remark}

We fix some notation that we will need for the following proposition. We let $\Gamma$ be as above, and as usual elements of $\Gamma$ will be denoted by $\gamma = (\gamma^x, \gamma^v)$. Instead, we let $$\Tilde{\Gamma} = \mathrm{Lip}((0,1),\, \Omega \times \mathbb R).$$ Elements of $\Tilde{\Gamma}$ are denoted by $$\Theta(\tau) \doteq \big(\Theta^t(\tau), \Theta^x(\tau), \Theta^v(\tau)\big) \in \Omega \times \mathbb R, \quad\tau \in (0, 1).$$

\begin{thm}\label{thm:lagraex}
    Let $u$ be a quasi-entropy solution and let $(\mu_0, \mu_1) \in \mathscr M(\Omega \times \mathbb R)^2$ be as in Proposition \ref{prop:feskin}. Then there exists a good Lagrangian representation $\boldsymbol{\omega} \in \mathscr M^+(\Gamma)$ of $u$.  Moreover, letting $(\wt\mu_0, \wt \mu_1)$ be the pair induced by $\bs \omega$ as in Definition \ref{defi:goodlr}, it holds $|\wt \mu_0| \leq |\mu_0|$ and
    \begin{equation}\label{eq:newpairest}
        \wt \mu_1^+ \leq \mu_1^+, \qquad \wt \mu_1^- \leq \mu_1^-, \qquad \text{as measures}.
    \end{equation}
\end{thm}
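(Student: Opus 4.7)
The plan is to encode the kinetic equation \eqref{eq:kinu} as a locally normal $1$-current on the extended phase space $\Omega\times\R$ and apply Smirnov's theorem (Theorem \ref{thm:smirnov}), then reparametrize the resulting Lipschitz curves by time. Concretely I would consider the vector-valued Radon measure
$$\ms T:=\bigl(\chi-\mathbf 1_{\{v\leq 0\}}\bigr)\bigl(1,f'(v),0\bigr)\cdot\mc L^{d+2}+(0,0,-\mu_1),$$
where the subtraction of the stationary $\mathbf 1_{\{v\leq 0\}}$ is a divergence-free shift whose only role is to render the $v$-support compact. A direct distributional computation from \eqref{eq:kinu} gives $\mr{div}\,\ms T=\mu_0$, hence $\partial \ms T=-\mu_0$ and $\ms T$ is, after localizing in a bounded open exhaustion of $\Omega$, a normal $1$-current. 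I would then split off cycles: there exist $\ms T^a$ a-cyclic and $\ms C$ a cycle with $\ms T=\ms T^a+\ms C$, $\partial \ms T^a=\partial \ms T$ and $\mathbb M(\ms T)=\mathbb M(\ms T^a)+\mathbb M(\ms C)$, so Theorem \ref{thm:smirnov} provides $\bs\eta\in\msc M^+(\mr{Lip}((0,1);\Omega\times\R))$ decomposing $\ms T^a$ into Lipschitz-curve currents.

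Next I would reparametrize by time. By Remark \ref{eq:intcurv}, for $\bs\eta$-a.e.\ $\Theta=(\Theta^t,\Theta^x,\Theta^v)$ the velocity $\dot\Theta$ is $\mc L^1$-a.e.\ parallel to the unit polar vector $\vec S$ of $\ms T^a$. Where the absolutely continuous part of $\ms T$ dominates, $\vec S$ is proportional to $(1,f'(v),0)$ and has strictly positive $t$-component; where the singular part carried by $\mu_1$ dominates, $\vec S$ is purely vertical, of the form $(0,0,\pm 1)$. Consequently $\Theta^t$ is non-decreasing, and each $\Theta$ can be reparametrized by its time coordinate on $I_\gamma=(\Theta^t(0),\Theta^t(1))$, producing $\gamma=(\gamma^x,\gamma^v)\in\Gamma$ with $\gamma^x$ Lipschitz, $\dot\gamma^x=f'(\gamma^v)$ a.e., and $\gamma^v\in BV(I_\gamma)$ whose jumps correspond to intervals on which $\Theta^t$ is constant. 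Pushing $\bs\eta$ forward under this measurable reparametrization defines $\bs\omega\in\msc M^+(\Gamma)$; \eqref{eq:deflagromega} follows from projecting \eqref{eq:Mdecomp} onto the $(t,x,v)$-slice and identifying the $t$-component of $\ms T^a$ with $\chi\,\mc L^{d+2}$; \eqref{eq:charaeqla} is precisely the parallelism above; and the bounds \eqref{eq:CKTlr} come from $\partial\ms T^a=-\mu_0$ controlling the endpoint count and from the $v$-component of $\|\ms T^a\|$ being bounded by $|\mu_1|$ controlling the $BV$ mass.

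For the final inequalities, observe that $\wt\mu_0=\int\mu_0^\gamma\dif\bs\omega=-\partial\ms T^a=\mu_0$, giving $|\wt\mu_0|\leq|\mu_0|$ (indeed with equality). For \eqref{eq:newpairest} I would exploit that, since $\ms T^a\leq \ms T$ as subcurrents, by Example \ref{rem:substru} they share the same polar unit field and $\|\ms T^a\|\leq\|\ms T\|$; hence the $v$-component of $\ms T^a$ is dominated componentwise in sign by that of $\ms T=-\mu_1$. Translating via the sign convention of \eqref{eq:mu1def}, this yields $\wt\mu_1^\pm\leq\mu_1^\pm$. The goodness property \eqref{eq:optcond} is then equivalent to the mass additivity \eqref{eq:Mdecomp} of Smirnov's decomposition together with the a-cyclicity of $\ms T^a$, which together preclude cancellation between distinct Lagrangian curves.

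The step I expect to be the main obstacle is establishing \eqref{eq:optcond} in full strength, because Smirnov's decomposition by itself does not rule out pairs of curves meeting at a jump time with opposite vertical orientations or curves being born and killed at the same point. Ruling out such configurations requires one to exploit that the common polar field $\vec S$ has a fixed sign on each singular stratum of $\ms T$, forcing all curves passing through a given vertical jump to travel in the same $v$-direction; a careful slicing argument, applied to the vertical portion of $\ms T^a$ on the horizontal hyperplanes $\{t=\mr{const}\}$ where jumps concentrate, should translate the a-cyclicity and mass-additivity into the no-cancellation identities of Definition \ref{defi:goodlr}. A secondary technical point is the measurable selection of the reparametrization of $\Theta$ across plateaus of $\Theta^t$, so that every vertical portion becomes a pure jump of $\gamma^v$ with the correct sign matching the $\pm\mathscr H^1$ pieces of \eqref{eq:mu1def}.
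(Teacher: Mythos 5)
Your overall strategy---encode $\chi$ and $\mu_1$ as a $1$-current on $\Omega\times\R$, apply Smirnov's theorem, and reparametrize the Lipschitz curves by time---is the same route taken in the paper, so the skeleton is sound. The normalization $\chi-\mathbf 1_{\{v\leq 0\}}$ is also a clean way to make the $v$-support compact, and is implicitly what the paper does by working with $u\in[0,1]$ and curves valued in $\R^d_x\times\R^+_v$.

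The step that creates a genuine gap is the cycle splitting. You write $\ms T=\ms T^a+\ms C$ with $\ms C$ a cycle and apply Smirnov to $\ms T^a$, and later you need to ``identify the $t$-component of $\ms T^a$ with $\chi\,\mc L^{d+2}$''. That identification requires $\ms C$ to have vanishing $(t,x)$-components, i.e.\ to be purely vertical, and you never show this; if $\ms C$ had a nonzero $t$-component, the projection of Smirnov's decomposition would fail to reproduce $\chi$, and likewise \eqref{eq:charaeqla} would fail. The paper avoids the issue entirely by proving that $\ms T$ itself is a-cyclic, so $\ms C=0$: any subcurrent $\ms C\leq\ms T$ having a nonzero portion on the set $A$ where the Lebesgue-absolutely-continuous part of $\ms T$ lives satisfies $\langle\partial\ms C,\phi\rangle=\int\vec S^t\,\dif\|\ms C\llcorner A\|>0$ for the cutoff of $\phi(t,x,v)=t$, since $\vec S^t>0$ on $A$; and a purely vertical subcurrent $\ms C=(0,0,-\hat\mu_1)$ with $\partial\ms C=0$ forces $\partial_v\hat\mu_1=0$, hence $\hat\mu_1=0$ by compact $v$-support. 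If you want to keep the cycle-splitting framing you must at least reproduce the first half of this argument to show $\ms C$ is vertical, and then the second half to conclude $\ms C=0$.

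Two smaller issues. First, on $A$ the polar field is $\vec S\propto(1,f'(v),\rho)$ with $\rho$ the density of the absolutely continuous part of $\mu_1$, not $(1,f'(v),0)$; consequently $\gamma^v$ can also carry nontrivial diffuse variation after reparametrization, not only jumps, which is why \eqref{eq:mu1def} contains the diffuse term $\wt D\gamma^v$. Your conclusion that $\Theta^t$ is nondecreasing is unaffected. Second, the set $\Delta$ of curves with $\Theta^t$ constant cannot be reparametrized and must be discarded before pushing forward ($\wt{\bs\eta}=\bs\eta\llcorner(\wt\Gamma\setminus\Delta)$ in the paper). This discarding, together with vertical segments at the ends of $\Theta$ that get absorbed rather than appearing as endpoints or jumps of $\gamma$, is precisely why one should not expect equality $\wt\mu_0=\mu_0$ or $\wt\mu_1=\mu_1$, only the one-sided estimates of \eqref{eq:newpairest}; your claim ``$\wt\mu_0=-\partial\ms T^a=\mu_0$'' therefore overshoots. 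You have correctly identified \eqref{eq:optcond} as the most delicate part; the paper's own proof is terse on that point too, resting on the fact that every subcurrent of $\ms T$ shares a single-valued polar field $\vec S$ (Example \ref{rem:substru}), which is the right ingredient for ruling out cancellations and is worth spelling out if you pursue this proof.
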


\begin{proof}
\textbf{1.}   Define the 1-current $\ms T \in \mathscr D_1(\Omega \times \mathbb R)$ as
  \begin{equation}\label{eq:Tdef}
  \begin{aligned}
    \ms T = \Big( \chi \cdot \msc L^{d+2}, \quad  \chi \cdot f^{\prime}(v) \llcorner \msc L^{d+2},\quad -\mu_1\Big)^T  \in \mathscr D_1(\Omega \times \mathbb R).
      \end{aligned}
    \end{equation}
    In this step we prove that $\ms T$ is a normal, a-cyclic current.
When testing $\ms T$ against a smooth vector field, we obtain
\begin{equation}
    \begin{aligned}
        \langle \ms T \; , \; \Phi \rangle  & = \int_{\Omega \times \mathbb R} \chi \cdot [ \Phi^t + \Phi^x \cdot f^{\prime}(v) ] \dif t \dif x \dif v - \int_{\Omega \times \mathbb R} \Phi^v \dif \mu_1   \\
        &  \leq \sqrt{1 + |f^{\prime}(v)^2|}\cdot  \int_{\Omega \times \mathbb R} \chi \dif t \dif x \dif v  +\Vert\mu_1\Vert_{\msc M}, \\
        & \qquad \forall\;  \Phi \in C^\infty_c(\Omega \times \mathbb R, \mathbb R^{d+2}), \qquad |\Phi| \leq 1.
    \end{aligned}
\end{equation}
The boundary of $\ms T$ is, by equation \eqref{eq:kinu}, the measure $\mu_0$, plus the boundary terms at $t = 0$ and $t = T$. In fact it holds
\begin{equation}
\begin{aligned}
\langle \partial \ms T \; , \; \phi\rangle  =&  \int_{\Omega \times \mathbb R} \chi \cdot [\partial_t \phi + \nabla  \phi_x \cdot f^{\prime}(v) ] \dif t \dif x \dif v - \int_{\Omega \times \mathbb R} \partial_v  \phi \dif \mu_1   \\
     =& - \int_{\Omega \times \mathbb R}  \phi \dif \mu_0 + \int_{\mathbb R^{d+1}} \chi(T) \cdot  \phi(T) - \chi(0) \cdot  \phi(0) \dif x \dif v \\
     =& -\big\langle \phi, \mu_0 \big\rangle + \Big\langle \delta_T\otimes \chi(t)\cdot \msc L^{d+1} -\delta_0\otimes \chi(0)\cdot \msc L^{d+1} \; , \; \phi \Big\rangle, \\
    & \qquad \forall \; \phi \in C^\infty_c(\Omega \times \mathbb R).
\end{aligned}
\end{equation}
Therefore $\ms T$ is a normal 1-current.

Finally, we  prove that $\ms T$ is a-cyclic. Let $\ms T = \vec {S} \cdot \left\Vert \ms T \right\Vert$ be the polar decomposition of the current (vector measure) $\ms T$. Moreover, we choose $A\subset \Omega \times \mathbb R$ measurable such that $\mu_1 = \mu_1^a + \mu_1^s =  \mu_1 \llcorner A + \mu_1\llcorner A^c$ is the decomposition of $\mu_1$ with respect to the Lebesgue measure into absolutely continuous and singular part respectively. In  particular, we can choose $A$ such that $\msc L^{d+2}(A^c) = 0$.
We let $\rho: \Omega \times \mathbb R \to \mathbb R$ be the density of $\mu^a_1$, i.e. $\mu_1^a=  \rho\msc L^{d+2}$.
Then the unit vector $\vec S$ is given by 
$$
(\vec S^t, \vec S^x, \vec S^v) = \vec S= \begin{cases}
\Big(\sqrt{1+\left|f^{\prime}(v)\right|^2 + \rho^2}\Big)^{-1}(1, f^{\prime}(v), \rho(t, x, v))^T, & \text{in} \; A\\
\\
(0, 0, \sigma(t,x,v))^T, & \text{in} \; A^c
\end{cases}
$$
where $\mu^s_1 = \sigma \left\Vert \mu^s_1\right\Vert$, $\sigma \in \{-1, 1\}$ $\mu^s_1$-a.e., is the polar decomposition of $\mu^s_1$.

As a preliminary step we prove that for every subcurrent $\ms C \leq \ms T$ such that $\ms C = \ms C_1 + \ms C_2$ with $0 \neq \ms C_1 \leq \ms T\llcorner A$ and $\ms C_2 \leq \ms T\llcorner A^c$, then $\partial \ms C \neq 0$. Let $R > 0$ be such that  $\mr{supp}\, \ms T \subset [0, T] \times B_R(0)\subset \Omega \times \mathbb R$.  Choose a test function $\phi \in \mathbf
 C^{\infty}_c(\Omega \times \mathbb R)$ such that
$$
\begin{cases}
\phi(t, x, v) = t, &  \text{if} \; (t, x, v) \in [0,T] \times B_R(0), \\
\phi(t, x, v) = 0, & \text{if}\; (t, x, v) \in  [0,T] \times B_{2R}(0)^c
\end{cases}
$$
Then it holds
$$
\langle \ms C \; , \; \nabla \phi \rangle = \int_{\Omega \times \mathbb R} \vec S^t\dif \left\Vert \ms C_1\right\Vert  >0
$$
because $\vec S^t(x) > 0$ for$\left\Vert \ms C_1\right\Vert$ a.e. $x \in \Omega \times \mathbb R$. To conclude that $\ms T$ is a-cyclic, we need to prove that for every $0 \neq \ms C \leq \ms T\llcorner A^c$, it holds $\partial \ms C \neq 0$. In this case, one has $\ms C = (0, 0, -\hat \mu_1)$, with $\hat \mu_1  \leq \mu_1$ (as $0$-currents), and $\partial \ms C = 0$ corresponds to $\partial_v \hat \mu_1 = 0$ in the sense of distributions. But since $\hat \mu_1$ is compactly supported in $v$, this implies $\hat \mu_1 = 0$, and therefore $\ms C = 0$, as wanted.

\vspace{0.3cm}
 \textbf{2.}  Thanks to Step 1, we can apply Smirnov Theorem \ref{thm:smirnov}, which gives a measure $\boldsymbol{\eta}\in \mathscr M^+(\wt \Gamma)$ such that \eqref{eq:Tdecomp} holds. Expanding the relation, this means that for every vector field $\Phi = (\Phi^t, \Phi^x, \Phi^v) \in C^\infty_c(\Omega \times \mathbb R)$, we have
\begin{equation}
    \begin{aligned}
   & \int_{\Omega \times \mathbb R}\chi \cdot [ \Phi^t + (f^{\prime}(v)\cdot \Phi^x ) ] \dif \mc L^{d+2}  -\int_{\Omega \times \mathbb R} \Phi^{v} \dif \mu_1 \\
   & = \int_{\wt \Gamma} \int_0^1 \dot\Theta(\tau) \cdot \Phi(\Theta(\tau))\dif \tau \dif \boldsymbol{\eta}(\Theta).
    \end{aligned}
\end{equation}
By Remark \ref{eq:intcurv} the  measure $\boldsymbol{\eta} \in \mathscr M^+(\wt \Gamma)$ given by Smirnov Theorem is concentrated on curves $\Theta \in \Tilde{ \Gamma}$ such that $\dot\Theta(\tau)$ is parallel to $\vec S(\Theta(\tau))$ for a.e. $\tau \in (0, 1)$. In particular, this means either $\Theta$ travels vertically along $v$, or the velocity of the first component is positive.

\vspace{0.3cm}
\textbf{3.}  Given the measure $\boldsymbol{\eta} \in \mathscr M^+(\wt \Gamma)$ in Step 2., in order to get a Lagrangian representation in the sense of Definition \ref{def:lagrangianrep}, we want to eliminate a specific set of \say{bad} curves. In particular, define $\Delta \subset \wt \Gamma$  as the set of curves such that $\Theta^t((0,1))$ is a singleton. These are the curves whose support is contained  on an hyperplane $\{t = s\}$. The new measure will be defined by 
$$
\Tilde{\boldsymbol{\eta}}  =\boldsymbol{\eta} \llcorner (\Tilde{\Gamma} \setminus \Delta).
$$
Notice that in any case, by the shape of the vector field $\vec S$, the curves satisfy $\dot \Theta^t(\tau) \geq 0$ for a.e. $\tau \in (0,1)$ for $\Tilde{\boldsymbol{\eta}}$ almost every $\Theta$. 
For curves in $\Tilde{\Gamma} \setminus \Delta$ such that the first component $\Theta^t$ is non-decreasing, it is well defined a \textit{reparametrization map} $\mc R :\Tilde{\Gamma} \setminus \Delta \to \Gamma$ defined as
\begin{equation}
    \mc R(\Theta)(t) \doteq \big(\Theta^x(\tau(t)), \Theta^v(\tau(t))\big) , \qquad \tau(t) \doteq \inf \{ \tau  \in (0,1) \; : \; t < \Theta^t(\tau)\}.
\end{equation}
Here $\tau(t)$ is just the pseudo-inverse of the increasing Lipschitz function $\Theta^t(\tau)$. The domain of the curve $ \mc R (\Theta)$ will be $\mr{int}[\Theta^t((0, 1))]$ (the interior).   

Therefore we are now allowed to consider the pushforward measure $$\boldsymbol{\omega} \doteq \mc R_{\sharp} \Tilde{\boldsymbol{\eta}}\, \in \, \mathscr M^+(\Gamma)$$  which is a good candidate to be a Lagrangian representation in the sense of Definition \ref{def:lagrangianrep}.

\vspace{0.3cm}
\textbf{4.} Finally, we prove that $\bs \omega$ is a Lagrangian representation. The second condition in Definition \ref{def:lagrangianrep} is trivial. For the first condition, we need to verify that 
\begin{equation}
   \int_{\Gamma}  \big[(\mr{id}, \gamma)_{\sharp} \msc L^1 \llcorner I_{\gamma}\big] \dif  \mc R_{\sharp} \wt{\boldsymbol{\eta}}(\gamma)= \chi \cdot \msc L^{d+2}.
\end{equation}
Therefore consider any  test function $\phi \in C^\infty_c(\Omega \times \mathbb R)$ and calculate

\begin{equation}
    \begin{aligned}
\int_{\Gamma} \int_{\Omega \times \mathbb R} \phi(t, x, v)\dif \big[(\mr{id}, \gamma)_{\sharp} & \msc L^1 \llcorner I_{\gamma}\big](t, x, v) \dif  \mc R_{\sharp} \wt{\boldsymbol{\eta}}(\gamma)  = \int_{\Gamma} \int_{I_{\gamma}} \phi(t, \gamma(t)) \dif t \dif \mc R_{\sharp} \wt{\bs \eta}(\gamma) \\
& =   \int_{\Gamma} \int_{I_{\gamma}} \phi(\Theta(\tau(t))) \dif t \dif \wt{\bs \eta}(\Theta) \\
& =  \int_{\Gamma} \int_{0}^1 \phi(\Theta(\tau)) (\Theta^t)^{\prime}(\tau) \dif \tau  \dif \wt{\bs \eta}(\Theta) \\
& =  \int_{\Gamma} \int_{0}^1 \phi(\Theta(\tau)) (\Theta^t)^{\prime}(\tau) \dif \tau  \dif \bs \eta(\Theta),
    \end{aligned}
\end{equation}
where the last equality holds because 
$$
\int_{\Delta} \int_0^1 (\Theta^t)^{\prime}(\tau) \dif \tau \dif \bs \eta = \int_{\Delta} \int_0^1 (\Theta^t)^{\prime}(\tau)  \dif \tau \dif \wt{\bs \eta} = 0
$$
since $(\Theta^t)^{\prime}(\tau) = 0$ for every $\tau \in (0, 1)$ if $\Theta \in \Delta$, therefore we are allowed to substitute $\Tilde{\bs \eta}$ with $\bs \eta$.
Moreover, for $\bs \eta$-a.e. $\Theta \in \wt \Gamma$, by the last observation in Step 2.,  it holds 
$$
(\Theta^t)^{\prime}(\tau)\cdot \mathbf 1_{A^c}(\Theta(\tau)) = 0, \qquad \text{for $\msc L^1$-a.e. $\tau \in (0, 1)$}
$$
Therefore we obtain 
\begin{equation}
    \begin{aligned}
       \int_{\Gamma} \int_{0}^1 \phi(\Theta(\tau)) (\Theta^t)^{\prime}(\tau) \dif \tau  \dif \bs \eta(\Theta)  = \int_{\Gamma} \int_{0}^1 \phi(\Theta(\tau)) (\Theta^t)^{\prime}(\tau) \cdot \mathbf 1_{A} (\Theta(\tau)) \dif \tau  \dif \bs \eta(\Theta).
    \end{aligned}
\end{equation}
Define the vector field 
$$
\Phi(t, x, v) = (\phi(t, x, v), 0, 0 ) \cdot \mathbf 1_{A}(t, x, v) \in C^\infty_c([0,T] \times \mathbb R^{d+1}).
$$
Using $\Phi$ as a test function in the representation of Step 2, we obtain
\begin{equation}
    \begin{aligned}
\int_{\Gamma} \int_{0}^1 \phi(\Theta(\tau)) (\Theta^t)^{\prime}(\tau) \cdot \mathbf 1_{A} (\Theta(\tau)) \dif \tau  \dif \bs \eta(\Theta)  & = \langle \ms T, \Phi \rangle = \langle \chi \cdot\msc L^{d+2}, \phi \rangle
    \end{aligned}
\end{equation}
as wanted. 
\end{proof}

\subsection{Epigraph and Simultaneous Lagrangian Representations}
We conclude this section with the proof of some useful properties of Lagrangian representations.

\subsubsection{Lagrangian representation for the epigraph.} Given a quasi-entropy solution of the balance law \eqref{eq:conslaw}, we constructed a Lagrangian representation for the function $\chi$ in \eqref{eq:chidef}, which is the characteristic function of the hypograph of $u$. In an entirely similar way, we can give the definition of Lagrangian representation for the \textit{epigraph} of $u$ 

\begin{defi}\label{defi:lagraepi}
We say that $\bs \omega_e$ is a Lagrangian representation for the epigraph of $u$ if conditions (2), (3) of Definition \ref{def:lagrangianrep} hold, and condition (1) is replaced by:
\begin{equation}\label{eq:deflagromegae}
        \chi_e \cdot \mathscr L^{d+2} = \int_{\Gamma} (\mathrm{id}, \gamma)_{\sharp} \mathscr L^1 \llcorner I_{\gamma} \dif \bs \omega_e(\gamma)
    \end{equation}
where 
\begin{equation}
    \chi_e(t,x,v) \doteq \begin{cases}
        1 & \text{if} \; u(t, x) \leq v , \\
        0 & \text{otherwise}
    \end{cases}
\end{equation}
\end{defi}
The existence of such a representation follows from Theorem \ref{thm:lagraex}: in fact, define
$$
\Tilde{u}(t, x)\doteq 1-u(t, x), \qquad \bs g(u) \doteq -f(1-u).
$$
Then, if $\wt \chi(t, x, v) = \mathbf 1_{\mr{hyp} \; \wt u}(t, x, v)$, $\wt \chi$ is a solution of the kinetic equation 
$$
\partial_t \wt \chi + \bs g^{\prime}(v) \cdot \nabla_x \wt \chi = \partial_v \big[ \alpha_{\sharp} \mu_1\big] - \alpha_{\sharp}\mu_0, \qquad \text{in} \; \msc D^{\prime}_{t, x, v}
$$
where $\alpha: \Omega \times (0, 1) \mapsto \Omega \times (0, 1)$ is defined as $\alpha(t, x, v) = \alpha(t, x, 1-v)$.
 Since $\wt u$ is a quasi-entropy solution with flux $\bs g$ by Proposition \ref{prop:feskin}, Theorem \ref{thm:lagraex} yields a Lagrangian representation of $\wt u$, which we call $\wt{\bs \omega}$. Now, letting the map $T : \Gamma \to \Gamma$ be defined as
$$
T(\gamma)(t) =\alpha(\gamma(t)), \qquad t \in I_{\gamma}
$$
it is easy to see that, since 
$$
\wt \chi(t,x,v) =  \chi_e(t,x,1-v)
$$
the measure $$\bs \omega_e \doteq T_{\sharp} \wt{\bs \omega}$$ is a Lagrangian representation of the epigraph of $u$. 

\subsubsection{Simultaneous Lagrangian Representations}\label{subsec:simlagra}
Let $u$ be a given quasi-entropy solution. To keep the notation as clear as possible, when we need to consider simultaneously Lagrangian representations of the hypograph and of the epigraph, we will call them $\bs \omega_h$ and $\bs \omega_e$, respectively. Moreover, we will let
$$
\chi_h\doteq\mathbf 1_{\mr{hyp} \; u}, \qquad \chi_e\doteq \mathbf 1_{\mr{epi} \; u}.
$$
It is clear that, if $\chi_h$ satisfies \eqref{eq:kinu} for some $\mu_0, \mu_1$, then $\chi_e = 1-\chi_h$ satisfies \eqref{eq:kinu} with $(-\mu_0, -\mu_1)$. Motivated by this observation, we give the following definition.
\begin{defi}\label{defi:siminduc}
    We say that a pair $(\mu_0, \mu_1)$ is \textit{simultaneously induced} by Lagrangian representations $\bs \omega_h$, $\bs \omega_e$ of the hypograph and of the epigraph respectively if $(\mu_0, \mu_1)$ is induced by $\bs \omega_h$ and $(-\mu_0, -\mu_1)$ is induced by $\bs \omega_e$.
\end{defi}
It is useful to introduce the following relation $\preceq$ between pairs $(\mu_0, \mu_1)$ that belong to the set
\begin{equation}\label{eq:pairset}
\mathbf P_{\chi} := \big\{(\mu_0, \mu_1) \in \mathscr M(\Omega \times \mathbb R) \; | \; \text{\eqref{eq:kinu} holds}\big\}.
\end{equation}
\begin{defi}\label{defi:relation}
Given $(\mu_0, \mu_1)$ and $(\widehat \mu_0, \widehat \mu_1)$ in $\mathbf P_{\chi}$, we write 
\begin{equation}\label{eq:relation}
\begin{aligned}
    (\widehat \mu_0, \widehat \mu_1) \preceq (\mu_0,  \mu_1) \Longleftrightarrow 
        \begin{cases}
             \widehat \mu^+_1 \leq \mu^+_1, \qquad \widehat \mu^-_1 \leq \mu^-_1 \qquad \text{as measures} \\
             |\widehat \mu_0| \leq |\mu_0| 
                \end{cases}
    \end{aligned}
\end{equation}
\end{defi}
Note that $\preceq$ is the natural relation respected by the application of Theorem \ref{thm:lagraex}. This allows to prove the following proposition, which states the existence of simultaneously induced pairs.
\begin{prop}\label{prop:goodpairs}
    Let $u$ be a quasi-entropy solution. Every pair $(\widehat \mu_0, \widehat \mu_1) \in \mathbf P_\chi$  which is minimal with respect to $\preceq$ is simultaneously induced by some good Lagrangian representations $\bs \omega_h, \bs \omega_e$.
\end{prop}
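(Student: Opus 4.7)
The plan is to construct $\bs\omega_h$ and $\bs\omega_e$ by two applications of Theorem \ref{thm:lagraex} (once directly to $u$, once via the epigraph construction of Subsection \ref{subsec:simlagra}), and then exploit the minimality of $(\widehat\mu_0, \widehat\mu_1)$ to force the induced pairs to be exactly $\pm(\widehat\mu_0, \widehat\mu_1)$.

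The key preliminary observation is that $\preceq$, although only a preorder a priori, is antisymmetric on $\mathbf P_\chi$. Indeed, if $(\mu_0', \mu_1'), (\mu_0'', \mu_1'') \in \mathbf P_\chi$ satisfy $(\mu_0', \mu_1') \preceq (\mu_0'', \mu_1'')$ and the reverse, then equality of positive and negative parts gives $\mu_1' = \mu_1''$, and subtracting the kinetic equations \eqref{eq:kinu} then yields $\mu_0' - \mu_0'' = -\partial_v(\mu_1' - \mu_1'') = 0$. Hence minimality of $(\widehat\mu_0, \widehat\mu_1)$ means exactly that no element of $\mathbf P_\chi$ sits strictly $\preceq$-below it. A second useful remark is that the involution $(\mu_0, \mu_1) \mapsto (-\mu_0, -\mu_1)$ preserves $\preceq$, because $(-\mu)^\pm = \mu^\mp$ and $|-\mu| = |\mu|$; it moreover bijects $\mathbf P_\chi$ with the analogous epigraph set $\mathbf P_{\chi_e}$.

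Applying Theorem \ref{thm:lagraex} to $(\widehat\mu_0, \widehat\mu_1)$ produces a good Lagrangian representation $\bs\omega_h$ of the hypograph whose induced pair $(\widetilde\mu_0, \widetilde\mu_1) \in \mathbf P_\chi$ satisfies $(\widetilde\mu_0, \widetilde\mu_1) \preceq (\widehat\mu_0, \widehat\mu_1)$; minimality forces $(\widetilde\mu_0, \widetilde\mu_1) = (\widehat\mu_0, \widehat\mu_1)$. For the epigraph, one applies Theorem \ref{thm:lagraex} to $\widetilde u = 1 - u$ and pushes the resulting representation forward by $T$ as in Subsection \ref{subsec:simlagra}, obtaining a good epigraph representation $\bs\omega_e$. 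Tracking the substitution $v \mapsto 1-v$, the induced pair $(\widetilde\mu_0^e, \widetilde\mu_1^e)$ of $\bs\omega_e$ satisfies $(\widetilde\mu_0^e, \widetilde\mu_1^e) \preceq (-\widehat\mu_0, -\widehat\mu_1)$ inside $\mathbf P_{\chi_e}$. Applying the sign-flip involution then gives $(-\widetilde\mu_0^e, -\widetilde\mu_1^e) \preceq (\widehat\mu_0, \widehat\mu_1)$ inside $\mathbf P_\chi$, and minimality forces $(\widetilde\mu_0^e, \widetilde\mu_1^e) = (-\widehat\mu_0, -\widehat\mu_1)$. Hence $\bs\omega_h$ and $\bs\omega_e$ simultaneously induce $(\widehat\mu_0, \widehat\mu_1)$ in the sense of Definition \ref{defi:siminduc}.

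The main (though still routine) obstacle is the sign-bookkeeping in the epigraph construction: one must verify, by tracking $v \mapsto 1-v$ on each atomic piece of $\mu_0^\gamma$ and $\mu_1^\gamma$ in \eqref{eq:mu1def}--\eqref{eq:mu0def} and then applying the pushforward $T_\sharp$, that the $\preceq$-inequality supplied by Theorem \ref{thm:lagraex} for $\widetilde u$ indeed transports to the claimed inequality for the pair induced by $\bs\omega_e$ viewed as an epigraph representation of $u$. The remainder of the argument is a straightforward combination of Theorem \ref{thm:lagraex} with the order-theoretic observations above.
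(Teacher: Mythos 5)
Your proof is correct and follows essentially the same strategy as the paper: apply Theorem~\ref{thm:lagraex} once to $u$ with the given minimal pair and once to $\widetilde u = 1-u$ (giving the epigraph representation via the pushforward $T_\sharp$), then invoke minimality to pin down the induced pairs. Your added observations — that $\preceq$ is genuinely antisymmetric on $\mathbf P_\chi$ (so minimality forces equality rather than mere $\preceq$-equivalence) and that the sign-flip involution intertwines $\mathbf P_\chi$ with $\mathbf P_{\chi_e}$ — make explicit two points the paper leaves implicit, and are correct. The one thing the paper's proof contains that yours omits is a short argument showing that minimal elements of $\mathbf P_\chi$ actually exist (by minimizing $\|\mu_1\|_{\mathscr M}$ over a $\preceq$-downset); this is not required by the statement as phrased, which quantifies over minimal pairs, so its omission is not a gap.
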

\begin{proof}

Let $(\widehat \mu_0, \widehat \mu_1)$ be a minimal element  of $\mathbf P_{\chi}$ for the relation $\preceq$. It is easy to see there exists at least one minimal element: consider a starting pair $(\bar \mu_0, \bar \mu_1)$ and consider the set 
$$
\Sigma := \{(\mu_0, \mu_1) \; | (\mu_0, \mu_1) \preceq (\bar \mu_0, \bar \mu_1)\}.
$$
Consider now a minimizer for the function $\mc F: (\mu_0,  \mu_1) \mapsto\Vert \mu_1\Vert_{\msc M}$ in $\Sigma$, which clearly exists, and call it $(\sigma_0, \sigma_1)$. We claim that this is a minimal element for $\preceq$, in fact, assume that $(\sigma^\prime_0, \sigma_1^\prime) \preceq (\sigma_0, \sigma_1)$. Then in particular $(\sigma^\prime_0, \sigma_1^\prime)  \preceq (\bar \mu_0, \bar \mu_1) $ so that $(\sigma^\prime_0, \sigma_1^\prime)   \in \Sigma$; moreover by the first condition of \eqref{eq:relation} there  holds $\sigma_1^{\prime,\pm} \leq \sigma_1^{\pm}$; by minimality for $\mc F$ we deduce $\sigma_1^\prime = \sigma_1$, and therefore $\sigma_0^\prime = \sigma_0$. Therefore $(\sigma_0, \sigma_1)$ is a minimal element for $\preceq$.

Now apply twice Theorem \ref{thm:lagraex} first to $u$ with the pair $(\widehat \mu_0, \widehat \mu_1)$ and then to $\wt u$ (with the reversed flux $\bs g(v) = -f(1-v)$) with the pair $(-\widehat \mu_0, -\widehat \mu_1)$. The new couple of pairs induced by the Lagrangian representations $\bs \omega_h$ and $\bs \omega_e$ (which are \emph{good}, according to Theorem \ref{thm:lagraex}) must coincide with the starting one, by minimality for the relation $\preceq$.
\end{proof}

\begin{remark}
We proved that any minimal pair $(\mu_0, \mu_1)$ for the relation \say {$\preceq$} is simultaneously induced. This proves that if a pair is efficient enough, then it is simultaneously induced. A partial converse holds: if a pair $(\mu_0, \mu_1)$ is simultaneously induced by $\bs \omega_h$, $\bs \omega_e$, then it is \say{efficient} in the sense that 
$$
\mr{supp}\, \mu_0\;  \cup \; \mr{supp}\, \mu_1  \subset \; \partial\, (\mr{hyp}\; u).
$$
\end{remark}

\subsubsection{Good Selection of Curves}
Given a Lagrangian representation $\bs \omega \in \msc M^+(\Gamma)$, we can select a good set of curves on which it is concentrated.  In this direction, with the same proof contained in \cite{marconi_burgers}, we have the following Lemma.
   \begin{lemma}\label{lemma:goodcurves}
       For $\bs \omega_h$-a.e. $\gamma \in \Gamma$ it holds that for $\msc L^1$-a.e. $t \in [0,T]$
       \begin{enumerate}
           \item $(t, \gamma^x(t))$ is a Lebesgue point of $u$;
           \item $\gamma^v(t) < u(t, \gamma^x(t))$
       \end{enumerate}
       We denote by $\Gamma_h$ the set of curves $\gamma \in \Gamma$ such that the two properties above hold. Similarly, for $\bs \omega_e$-a.e. $\gamma \in \Gamma$ it holds that for $\msc L^1$-a.e. $t \in [0,T]$
       \begin{enumerate}
           \item $(t, \gamma^x(t))$ is a Lebesgue point of $u$;
           \item $\gamma^v(t) > u(t, \gamma^x(t))$
       \end{enumerate}
       and we denote by $\Gamma_e$ the set of curves $\gamma \in \Gamma$ such that the two properties above hold. 
\end{lemma}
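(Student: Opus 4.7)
The plan is to derive both properties from the defining identity \eqref{eq:deflagromega} of the Lagrangian representation, which exhibits $\chi \cdot \mathscr{L}^{d+2}$ as the superposition $\int_\Gamma (\mathrm{id}, \gamma)_\sharp \mathscr{L}^1 \llcorner I_\gamma \, d\bs \omega_h(\gamma)$ of the arclengths of the curves $\gamma$. The strategy is, for each property, to produce a set of $(t,x,v)$ of zero $\chi \cdot \mathscr{L}^{d+2}$-mass that captures the violation of the property, and then use the disintegration to transfer this negligibility to $\bs \omega_h$-a.e. curve.

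For property (2), set $G \doteq \{(t,x,v) : v \geq u(t,x)\}$. By \eqref{eq:chidef}, $\chi \equiv 0$ on $\{v > u\}$, and the graph $\{v = u(t,x)\}$ of the bounded function $u$ has $\mathscr{L}^{d+2}$-measure zero by Fubini, so $(\chi \cdot \mathscr{L}^{d+2})(G) = 0$. Testing both sides of \eqref{eq:deflagromega} against $\mathbf{1}_G$ (after truncation on a $v$-slab to ensure local finiteness) gives
\begin{equation*}
0 \;=\; \int_\Gamma \mathscr{L}^1\!\big(\big\{t \in I_\gamma : \gamma^v(t) \geq u(t, \gamma^x(t))\big\}\big)\, d\bs \omega_h(\gamma),
\end{equation*}
hence $\gamma^v(t) < u(t, \gamma^x(t))$ for $\mathscr{L}^1$-a.e. $t \in I_\gamma$, for $\bs \omega_h$-a.e. $\gamma$. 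For property (1), I project \eqref{eq:deflagromega} onto the $(t,x)$ variables after intersecting with a slab $\{|v| \leq M\}$ for some $M \geq \|u\|_\infty$. The resulting $(t,x)$-marginal on the left-hand side has density bounded by $2M$ with respect to $\mathscr{L}^{d+1}$, hence is absolutely continuous. Letting $N \subset \Omega$ be the set of non-Lebesgue points of $u$, one has $\mathscr{L}^{d+1}(N) = 0$, so this marginal vanishes on $N$. Disintegration then forces $\mathscr{L}^1(\{t \in I_\gamma : (t, \gamma^x(t)) \in N\}) = 0$ for $\bs \omega_h$-a.e. $\gamma$, which is (1); the bound (3) in Definition \ref{def:lagrangianrep} keeps the $\bs \omega_h$-integrability under control uniformly on compacts.

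The epigraph version follows by the identical argument applied to the representation $\bs \omega_e$ of Definition \ref{defi:lagraepi}, with $\chi$ replaced by $\chi_e = \mathbf{1}_{v \geq u}$; the strict inequality now reads $\gamma^v(t) > u(t, \gamma^x(t))$. No serious obstacle is expected: the entire argument is a Fubini-type disintegration, and the only delicate ingredient is the local-finiteness bookkeeping in $v$, which is handled by the slab truncation together with item (3) of Definition \ref{def:lagrangianrep}.
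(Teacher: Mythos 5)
The paper itself gives no proof of this lemma; it is deferred with the phrase ``with the same proof contained in \cite{marconi_burgers}''. Your argument is the natural Fubini/disintegration proof that one would expect to find there, and it is essentially correct. Both properties are derived by reading the defining identity \eqref{eq:deflagromega} as an equality of (locally finite, hence Radon) positive measures and applying it to well-chosen Borel sets.

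Two small points of bookkeeping are worth tightening. First, in the argument for property (1), after restricting to a slab $\{|v|\le M\}$, the disintegration only gives
$\mathscr{L}^1\big(\{t\in I_\gamma : (t,\gamma^x(t))\in N,\ |\gamma^v(t)|\le M\}\big)=0$
for $\bs\omega_h$-a.e.\ $\gamma$. To drop the extra constraint you must know that $\mathscr{L}^1\big(\{t\in I_\gamma:|\gamma^v(t)|>M\}\big)=0$ for a.e.\ $\gamma$; this does not follow from item (3) of Definition~\ref{def:lagrangianrep}, which is about $\bs\omega_h$-mass of tubes of curves rather than about the $v$-range of individual curves. It does, however, follow either from property (2) (which you should therefore prove first), or from the fact that, in the setting of Section~4, the $v$-domain is $(0,1)$ and $u$ takes values in $[0,1]$, so the slab with $M\ge\|u\|_\infty$ is no truncation at all. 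You gesture at ``local-finiteness bookkeeping'' but the actual point is the ordering of (2) before (1), or the a priori boundedness of the $v$-range. Second, in property (2) you correctly observe that $\{v=u(t,x)\}$ is $\mathscr{L}^{d+2}$-null, which is needed to pass from ``$v>u$'' to ``$v\ge u$''; this is worth stating, as the lemma's conclusion is a strict inequality. With these clarifications, your proof is complete and, as far as one can tell, coincides in spirit with the proof the authors cite.
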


\section{Structure of the Kinetic Measures}\label{sec:structurekm}
\subsection{Rectifiability of J}
We assume from now on that $u$ is a bounded quasi-entropy solution taking values in $[0,1]$, $u \in \mathbf L^\infty(\Omega, [0,1])$ with flux $f$. This is not restrictive since we consider in any case bounded quasi-entropy solutions.

Throughout this section, we will use the following structural assumption about the nonlinearity of the flux $f$.
\begin{defi}\label{def:fluxgenn}
    We say that a flux $f: [0, 1] \to \mathbb  R^d$ is \textit{weakly genuinely nonlinear} if 
    \begin{equation}\label{eq:genuinon}
        \msc L^1 \Big( \big\{ v \in (0, 1) \; \big| \; \tau + \xi \cdot f^{\prime}(v) = 0 \big\}\Big) = 0, \qquad \forall \; 
        (\xi, \tau)  \in \mathbb S ^{d+1}
    \end{equation}
\end{defi}
In \cite{DOW03}, the structure of the kinetic measures $\mu_1$ in \eqref{eq:kinu}  has been studied in general dimension, when $\mu_0 = 0$. These results directly apply also to the case when a source term $\mu_0$ is present, although some extra care must be used when choosing the representative $(\mu_0, \mu_1) \in \mathbf P_\chi$. We summarize below the results that are obtained directly from \cite{DOW03}, in our setting. We first recall some definitions.
\begin{defi}\label{defi:projset}
In connection to a pair $(\mu_0, \mu_1) \in \mathbf P_{\chi}$,
\begin{enumerate}
    \item we denote by $\nu_0, \nu_1$ the $(t, x)$ marginals of the total variation of $\mu_1, \mu_0$:
\begin{equation}\label{eq:projmeas}
    \nu_0 \doteq [p_{t, x}]_{\sharp} | \mu_0| , \qquad \nu_1 \doteq  [p_{t, x}]_{\sharp} |\mu_1|
\end{equation}
where $p_{t, x} : \Omega\times (0,1)\to \Omega$ is the natural projection on the $(t, x)$ variables. We also let $$\nu \doteq \nu_0 + \nu_1.$$

\item we denote by  $\mathbf J \subset \Omega$ the set of points $(t, x)$ of positive $\msc H^{d}$ density of $\nu$:
\begin{equation}\label{eq:setJ}
    \mathbf  J \doteq \Bigg\{(t, x) \in (0, T) \times \mathbb R^d \; \Big| \; \limsup_{r \downarrow 0} \frac{\nu\left(\mr B_r(t,x)\right)}{r^{d}} > 0\Bigg\}
\end{equation}

\item we say that $u: \Omega \to (0, 1)$ has vanishing mean oscillation at a point $(t, x)$ if 
\begin{equation}
    \lim_{r \downarrow 0} r^{-d-1}\int_{\mr B_r\left(t, x\right)} |u(s, y) - \ol u_r(t, x)| \dif s \dif y = 0
\end{equation}
where $\ol u_r(t, x)$ is the mean of $u$ in the ball $\mr B_r(t,x)$.
\item  Let $J \subset \mathbb{R}^{d+1}$ be a $d$-rectifiable set with unit normal $\vec{\bs{  n}}$. We call two Borel functions $u^-, u^+ : J \to \mathbb{R}$ left and right traces of $u$ on $J$ with respect to $\vec{\bs{  n}}$ if, for $\msc{H}^{d}$-a.e. $(s,y) \in J$,
$$
\lim_{r \downarrow 0} \frac{1}{r^n} \left( \int_{B^-_r(s,y)} |\bs u(t,x) - u^-(s,y)| \, \dif  t\dif x + \int_{B^+_r(s,y)} |\bs u(t,x) - u^+(s,y)| \,  \dif  t\dif x \right) = 0,
$$
where $B^\pm_r(s,y) := \{ (t,x) \in B_r(s,y) \mid \pm \big((t,x)-(s,y) \big) \cdot\vec{\bs{  n}}(s,y) > 0 \}$.
\end{enumerate}
\end{defi}
In our context, the result of \cite{DOW03}, yields
\begin{thm}[\cite{DOW03}]\label{thm:dimdthm}
   Let $d \in \mathbb N$ and $f$ be weakly genuinely nonlinear. Let $u$ be a quasi-entropy solution of \eqref{eq:conslaw}, and let $(\mu_0, \mu_1)$ be a minimal pair in $\mathbf P_{\chi}$. Then the set $\mathbf J$ in \eqref{eq:setJ}  is d-rectifiable and 
    \begin{enumerate}
        \item  $u$ has vanishing mean oscillation at every point $(t,x) \in \mathbf J^c$, the complement of $\mathbf J$,
        \item $u$ has left and right traces on $\mathbf J$.
   \item $\mu_\eta\llcorner \mathbf J= ((\eta(u^+),q(u^+)) - (\eta(u^-),q(u^-)))\cdot \vec{\bs n} \mathscr H^d\llcorner \mathbf J$, where $u^\pm$ denotes the traces on $J$ and $\vec{\bs n}$ denotes the normal to $\mathbf J$.
    \end{enumerate}
\end{thm}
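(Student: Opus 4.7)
The plan is to follow the De Lellis--Otto--Westdickenberg blow-up strategy from \cite{DOW03} essentially verbatim, treating the source term $\mu_0$ as a perturbation that, thanks to the minimality of the pair $(\mu_0,\mu_1)$ in $\mathbf P_\chi$, does not interfere with the local analysis. The core idea is that at $\nu$-almost every point one can perform a parabolic (here just isotropic) blow-up of $u$ and of the kinetic measures and extract limits whose kinetic formulation is rigid enough, under the weak genuine nonlinearity \eqref{eq:genuinon}, to force either constancy or a single planar jump.

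The first step would be to set up the blow-up. At a point $(t_0,x_0)\in\Omega$, define $u_r(t,x):=u(t_0+rt,x_0+rx)$ and rescale $\mu_1,\mu_0,\chi$ consistently. The sequence $u_r$ is bounded in $L^\infty$, hence pre-compact in the weak-$*$ topology of Young measures, and one checks (as in \cite{DOW03}) that any limit $\chi_\infty$ again satisfies a kinetic equation $\partial_t\chi_\infty+f'(v)\cdot\nabla_x\chi_\infty=\partial_v\mu_{1,\infty}+\mu_{0,\infty}$ on $\mathbb R^{d+1}\times(0,1)$, where $\mu_{1,\infty}$ and $\mu_{0,\infty}$ are weak-$*$ limits of rescaled copies of $\mu_1$, $\mu_0$. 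For $(t_0,x_0)\notin\mathbf J$, the definition of $\mathbf J$ together with the bound $|\mu_{0,\infty}|+|\mu_{1,\infty}|\le C\,\limsup_{r\downarrow 0}\nu(B_r)/r^d$ forces $\mu_{1,\infty}=\mu_{0,\infty}=0$. Then the averaging lemma applied to the homogeneous kinetic equation, combined with \eqref{eq:genuinon}, produces strong compactness and forces $\chi_\infty$ to be the hypograph of a constant; this is exactly the vanishing mean oscillation statement (1).

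For the rectifiability and the trace/jump structure, one specialises to points of positive $d$-dimensional density of $\nu$. The same blow-up, now performed at $\nu$-a.e.\ point of $\mathbf J$, yields a nontrivial $\mu_{1,\infty}$ (and a $\mu_{0,\infty}$ that is controlled by minimality of the representative, see below). The weak genuine nonlinearity combined with the localisation of $\mu_{1,\infty}$ on a set with positive $d$-density forces the blow-up $\chi_\infty$ to be the hypograph of a function of a single linear combination $\tau t+\xi\cdot x$, taking exactly two values $u^-,u^+$ separated by a hyperplane with unit normal $\vec{\bs n}$. The existence of a unique (up to a sign) tangent hyperplane at $\nu$-a.e.\ point of $\mathbf J$ provides rectifiability via Preiss-type criteria, and existence of traces $u^\pm$ and the Rankine--Hugoniot identity in (3) follow from Vol'pert-type chain rule applied to $\eta(u)$ on the piecewise constant blow-up limit.

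The main obstacle, and the reason the statement says ``extra care must be used,'' is in the blow-up of $\mu_0$. If one worked with an arbitrary representative $(\mu_0,\mu_1)$, the measure $\mu_0$ could be wildly non-supported on the jump set and could dominate the blow-up, producing spurious non-planar limits. This is precisely ruled out by choosing $(\mu_0,\mu_1)$ minimal for $\preceq$: by Proposition \ref{prop:goodpairs} such a pair is simultaneously induced by good Lagrangian representations $\bs\omega_h,\bs\omega_e$, hence $\mathrm{supp}\,\mu_0\cup\mathrm{supp}\,\mu_1\subset\partial(\mathrm{hyp}\,u)$, and the source $\mu_0$ is concentrated where $\chi$ actually jumps. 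With this choice the rescaled $\mu_{0,r}$ behaves, at $\mathscr H^d$-a.e.\ point of $\mathbf J$, as an $\mathscr H^d$-like measure on a hyperplane and fits compatibly into the Rankine--Hugoniot condition (in fact $\mu_{0,\infty}$ reduces to the trivial entropy balance $(u^+-u^-)\vec{\bs n}\,\mathscr H^d\llcorner\mathbf J$ times $\delta_0$), while at points outside $\mathbf J$ the bound $|\mu_{0,\infty}|\le C\cdot 0=0$ still holds. Once this is granted, the remaining steps reduce to the arguments already carried out in \cite{DOW03}, applied verbatim to $\mu_{1,\infty}$.
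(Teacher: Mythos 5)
Your proposal and the paper diverge in strategy. The paper does \emph{not} redo the DOW--blow-up analysis; it reduces directly to \cite{DOW03} by observing that \cite{DOW03} already handles a source term via the measure $\wt\mu(t,x,v):=\mu_1(t,x,v)+\int_{-\infty}^v\dif\mu_0(t,x,\cdot)$, which satisfies $\partial_v\wt\mu=\partial_v\mu_1+\mu_0$. The whole content that needs to be supplied is a reconciliation of the two jump-set definitions: \cite{DOW03} uses the positive $\mathscr H^d$-density set $\wt{\mathbf J}$ of $\wt\nu:=(p_{t,x})_\sharp|\wt\mu|\llcorner(\mathbb R^{d+1}\times(-\infty,|u|_\infty))$, whereas the statement uses $\mathbf J$, the positive-density set of $\nu=(p_{t,x})_\sharp(|\mu_0|+|\mu_1|)$. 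The paper shows $\mathbf J=\wt{\mathbf J}$ up to $\mathscr H^d$-null sets by proving $\nu\ll\wt\nu$ (the converse being trivial): if $\nu(A)>0$ but $\wt\nu(A)=0$, then $\wt\mu\llcorner(A\times\mathbb R)=0$ forces $\partial_v(\mu_1\llcorner A)+\mu_0\llcorner A=0$, so $(\mu_0\llcorner A^c,\mu_1\llcorner A^c)$ is a strictly smaller element of $\mathbf P_\chi$, contradicting minimality of $(\mu_0,\mu_1)$ for $\preceq$. This is where minimality enters --- a purely measure-theoretic observation, no blow-up required.

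Your sketch instead re-runs the blow-up machinery with the zeroth-order source $\mu_{0,\infty}$ carried along, and this is where the gap lies. You assert that at $\nu$-a.e.\ point of $\mathbf J$ the rescaled $\mu_{0,r}$ converges to an $\mathscr H^d$-like measure on a hyperplane of the form $(u^+-u^-)\vec{\bs n}\,\mathscr H^d\otimes\delta_0$; this is unjustified. For a minimal pair, $\mu_0$ is not in general of the product form $m\otimes\delta_0$ (that was only the particular choice made in the proof of Proposition \ref{prop:feskin}), and the $v$-marginal of the blow-up of $\mu_0$ need not be a Dirac at $v=0$. More to the point, the averaging/rigidity step in \cite{DOW03} is calibrated for a pure $\partial_v$-structure on the right-hand side; you would need to verify that the zeroth-order term $\mu_{0,\infty}$ does not destroy the classification of blow-up limits, and the support fact $\mathrm{supp}\,\mu_0\subset\partial(\mathrm{hyp}\,u)$ alone does not give you this. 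The paper sidesteps all of this by the $\wt\mu$ substitution, which absorbs $\mu_0$ into a derivative term exactly of the shape \cite{DOW03} analyses. In short: you have correctly identified that minimality is the crucial hypothesis, but you have not identified the mechanism by which it is used, and your replacement mechanism (controlling $\mu_{0,\infty}$ in the blow-up) contains an unproved --- and as stated, likely false --- structural claim.
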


For $BV$ solutions (1) and (3) can be improved to
\begin{enumerate}
\item[(1')] every $(t,x)\notin \mathbf J$ is a Lebesgue point;
\item[(3')] $\mu_\eta= ((\eta(u^+),q(u^+)) - (\eta(u^-),q(u^-)))\cdot \vec{\bs n}\mathscr H^d \llcorner \mathbf J$.
\end{enumerate}

\begin{remark}
    One may wonder where the minimality assumption of $(\mu_0, \mu_1)$ enters in the proof of Theorem \ref{thm:dimdthm}: the authors of \cite{DOW03} consider an equivalent kinetic formulation, for quasi-entropy solutions, which reads:
    \begin{equation}
        \chi_t + f^\prime(v)\cdot \nabla_x \chi = \partial_v \wt \mu
    \end{equation}
    where $\wt \mu$ is possibly non-compactly supported in $v$ to account for a source term. The link with our $\mu_0, \mu_1$ (which are compactly supported in $v$) is 
    $$
    \wt \mu(t,x,v): = \mu_1(t,x,v) + \int_{-\infty}^v \dif \mu_0(t,x, v)
    $$
    where the second measure in the right hand side  denotes the primitive in the $v$ variable of the measure $\mu_0$.
 In \cite{DOW03}, the rectifiable jump set, let us call it $\wt {\mathbf J}$, is defined as the set of positive $\mathscr H^d$-density of the measure $$\wt \nu := (p_{t,x})_\sharp |\wt \mu| \llcorner (\mathbb R^{d+1} \times(-\infty, |u|_\infty)).$$ Our $\mathbf J$ is instead the set of positive $\mathscr H^d$-density of the measure $\nu:=(p_{t,x})_\sharp(|\mu_1| +|\mu_0|)$. Therefore to obtain Theorem \ref{thm:dimdthm} from the results of \cite{DOW03} we only need to show that 
 $$
 \mathbf J =  \wt{\mathbf J}  \qquad \text{up to an $\mathscr H^d$-negligible set}.
 $$
To prove this, it is enough to prove that $\nu \ll \wt \nu$ (notice that the other $\wt \nu \ll \nu$ is trivial). By contradiction, assume that there is a measurable set $A \subset \mathbb R^{d+1}$ with positive measure such that 
$$
\nu(A) > 0, \qquad \wt \nu(A) = 0.
$$
But this means that $$\Big( \mu_1 + \int_{-\infty}^v \mu_0(t,x,v) \dif v\Big)\llcorner A = 0$$ that implies $$ \partial_v \mu_1\llcorner A + \mu_0\llcorner A = 0$$ and therefore we can replace the pair $(\mu_0, \mu_1)$ with another pair $(\mu_0^\prime, \mu_1^\prime) \in \mathbf P_\chi$ defined by
$$
\mu_1^\prime = \mu_1 \llcorner A^c, \qquad \mu_0^\prime = \mu_0 \llcorner A^c.
$$
Clearly $(\mu_0^\prime, \mu_1^\prime) \preceq (\mu_0, \mu_1)$, and this contradicts the minimality of $\mu_0, \mu_1$.
\end{remark}

Establishing (1') for general weakly genuinely nonlinear fluxes is an open problem at the time of writing, but in  \cite{Si18} the author considered the case of entropy solutions  with a power-type nonlinearity assumption on $f$, and in this setting he proved that every point $(t,x)\notin J$ is a continuity point, providing a 
positive answer about (1') in this particular case. Moreover, in \cite{marconi_structure} the author showed that, for general finite entropy solutions, the set of non-Lebesgue points has Hausdorff dimension at most $d$, also providing a partial answer to (1). Also, for entropy solutions in one space dimension both (1') and (3') have affirmative answers: see \cite{BM_structure}.

In the following section, we prove that property (3') holds for quasi-entropy solutions in one space dimension.

\subsection{The One-Dimensional Case}
The aim of this section is to extend the rectifiability result \cite{marconi_burgers} to general genuine nonlinear fluxes in one space dimension. In particular, we will prove the following Theorem.

\begin{thm}\label{thm:1drect}
  Let $d = 1$ and $f$ satisfy \eqref{eq:genuinon}. Let  $u \in \mathbf L^\infty(\Omega; [0,1])$ be a quasi-entropy solution of \eqref{eq:conslaw}, and let $(\mu_0, \mu_1)$ be a minimal pair in $\mathbf P_{\chi}$. Then, in addition to Theorem \ref{thm:dimdthm}, $\nu_1$ in \eqref{eq:projmeas} is concentrated on $\mathbf J$, i.e.
  \begin{equation}\label{eq:concentration}
      \nu_1 = \nu_1 \llcorner \mathbf J
  \end{equation}
\end{thm}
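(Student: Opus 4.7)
The plan is to reduce the claim, via the Lagrangian representation of $u$, to a statement about the variation of the velocity component of a single Lagrangian curve, and then exploit the weak genuine nonlinearity of $f$ together with the VMO characterization from Theorem \ref{thm:dimdthm}(1). First, by Proposition \ref{prop:goodpairs}, I fix simultaneous good Lagrangian representations $\boldsymbol{\omega}_h$, $\boldsymbol{\omega}_e$ of the hypograph and epigraph of $u$ inducing the minimal pair $(\mu_0, \mu_1)$. Combining the explicit formula \eqref{eq:mu1def} for $\mu_1^\gamma$ with the no-cancellation identity in Definition \ref{defi:goodlr}, I obtain the disintegration
$$
\nu_1 = (p_{t,x})_\sharp |\mu_1| = \int_\Gamma (\mathrm{id}, \gamma^x)_\sharp |D\gamma^v|\, d\boldsymbol{\omega}_h(\gamma),
$$
so that by Fubini it is enough to show that for $\boldsymbol{\omega}_h$-a.e.\ $\gamma \in \Gamma_h$, the point $(t, \gamma^x(t))$ lies in $\mathbf{J}$ for $|D\gamma^v|$-a.e.\ $t \in I_\gamma$.

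To establish this single-curve statement, I would argue by contradiction: assume that on a set of positive $\boldsymbol{\omega}_h$-measure of curves $\gamma$, there is a set of positive $|D\gamma^v|$-measure of times $t_0$ with $(t_0, x_0) := (t_0, \gamma^x(t_0)) \notin \mathbf{J}$. By Theorem \ref{thm:dimdthm}(1), at each such $(t_0, x_0)$ the function $u$ has vanishing mean oscillation around an averaged value $\bar u(t_0, x_0)$. Using Lemma \ref{lemma:goodcurves}, the sandwich inequalities $\gamma_h^v(t) \leq u(t, \gamma_h^x(t)) \approx \bar u$ and $\gamma_e^v(t) \geq u(t, \gamma_e^x(t)) \approx \bar u$ hold for hypograph resp.\ epigraph curves crossing near $(t_0, x_0)$, for Lebesgue times $t$ close to $t_0$.

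The heart of the proof then pairs the local variation of $\gamma^v$ at $t_0$ with the characteristic equation $\dot\gamma^x = f'(\gamma^v)$: a nontrivial piece of $D\gamma^v$ at $t_0$, by the weak genuine nonlinearity \eqref{eq:genuinon}, forces a positive-measure spread of characteristic slopes $f'(\gamma^v)$ on the two sides of $t_0$. Combined with the conservation identity \eqref{eq:deflagromega} applied to both $\boldsymbol{\omega}_h$ and $\boldsymbol{\omega}_e$, this spread produces a quantitative lower bound for the $\mathscr H^1$-density of $\nu$ at $(t_0, x_0)$, contradicting the assumption that $(t_0, x_0) \notin \mathbf{J}$. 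The overall scheme follows the pattern developed in \cite{marconi_burgers, Mar23} for Burgers and for fluxes with locally finite inflection set; here we adapt it to arbitrary weakly genuinely nonlinear $f$.

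The main obstacle is precisely this quantitative step. For Burgers the identity $f'(v) = v$ makes the translation between variation of $\gamma^v$ and separation of characteristics immediate, while under only \eqref{eq:genuinon} one has no pointwise lower bound on how $f'$ spreads values. I would therefore stratify the velocity variable into countably many Borel sets on which $f'$ enjoys a uniform nondegeneracy modulus (a measure-theoretic substitute for strict convexity), apply the previous argument on each stratum, and finally sum over strata via a covering lemma, using the control \eqref{eq:CKTlr} on $\boldsymbol{\omega}_h, \boldsymbol{\omega}_e$ to avoid double-counting of curves.
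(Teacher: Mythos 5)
Your architecture overlaps with the paper's in several places: you use simultaneously induced good Lagrangian representations $\bs\omega_h,\bs\omega_e$, you disintegrate $\nu_1$ over curves (your formula $\nu_1=\int_\Gamma(\mathrm{id},\gamma^x)_\sharp|D\gamma^v|\dif\bs\omega_h$ is correct for a good representation), you propose stratifying the velocity variable according to the nondegeneracy of $f'$, and you invoke the VMO characterization of $\mathbf J^c$ as the source of the contradiction. The paper likewise introduces a transport plan pairing the positive variation of hypograph curves with the negative variation of epigraph curves and splits it along the connected components $A_l$ of $\{f''\neq0\}$, so the stratification instinct is right.

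There is, however, a genuine gap in how you treat the diffuse part $\widetilde D\gamma^v$ within a stratum. Your pivotal step — \emph{a nontrivial piece of $D\gamma^v$ at $t_0$ forces a positive-measure spread of $f'(\gamma^v)$ on the two sides of $t_0$, and hence a lower bound on the $\mathscr H^1$-density of $\nu$ at $(t_0,x_0)$} — is valid only when $\gamma^v$ jumps at $t_0$. In that case there is a fixed gap in $v$ that the paired epigraph curve must straddle, and the density estimates of Lemmas \ref{lemma:hypreg}, \ref{lemma:epireg} and Proposition \ref{prop:notvmo} yield the contradiction with VMO; that is exactly the paper's Step~4. For the Cantor/absolutely continuous part of $D\gamma^v$ there is no fixed $v$-gap at $t_0$: by Lemma \ref{lemma:bvlemma} the curve merely turns, $\gamma^v(t_0-\epsilon)-\gamma^v(t_0+\epsilon)\to0$, and the resulting spread of slopes vanishes with $\epsilon$. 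You therefore obtain no quantitative lower density bound, and the VMO contradiction does not kick in. The paper handles this part by an entirely different mechanism: inside each $A_l$ (where $f$ is strictly convex or concave), the no-crossing Lemma~\ref{lemma:nocrossing} plus the Lipschitz-envelope construction of Step~3 shows that $[P_{t,x}]_\sharp\pi^+_l$ is concentrated on the countably many graphs $\mathscr F^l_{\bar t,\bar x}$, and only afterwards is that 1-rectifiable set identified with $\mathbf J$ via the trace formula (3) of Theorem~\ref{thm:dimdthm}. So your single-curve reduction "for $\bs\omega_h$-a.e. $\gamma$ and $|D\gamma^v|$-a.e. $t$, $(t,\gamma^x(t))\in\mathbf J$'' cannot be closed by a pointwise density contradiction alone; the envelope/no-crossing step is an irreplaceable additional ingredient that your sketch does not supply.
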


We will actually prove that $\nu_1$ is concentrated on a $1$-rectifiable set, it is then immediate to see that the set coincides with the set $\mathbf J$ thanks to formula (3) of Theorem \ref{thm:dimdthm}.

For convenience, recall that $\Omega = (0,T) \times \mathbb R$, and in the following we denote
$
 X = \Omega \times (0, 1).
$
The strategy of the proof is as follows. From Proposition \ref{prop:goodpairs} we know that if $(\mu_0, \mu_1)$ is a minimal pair, then it is simultaneously induced by some $\bs \omega_h, \bs \omega_e$ \emph{good} Lagrangian representations of  the hypograph and the epigraph of $u$, respectively.
The concentration on $\mathbf J$ of the full measure $\nu_1$ will follow by the concentration on $\mathbf J$ of both the measures $[p_{t, x}]_{\sharp} \mu_1^+$ and $[p_{t, x}]_{\sharp} \mu_1^-$. Therefore first we prove the result for $[p_{t, x}]_{\sharp} \mu_1^+$, and the other case will follow by symmetry.  

The proof is established in five steps.

\vspace{0.5cm}

\textbf{1.}  We introduce the measures  $\bs \omega_h \otimes \mu_1^{\gamma, +}$,  $\bs \omega_e \otimes \mu_1^{\gamma, -} \in \msc M(\Gamma \times X)$, i.e. the measures defined as, for Borel sets $G \in \mc B(\Gamma)$, $A \in \mc B(X)$, 
    \begin{equation}
    \begin{aligned}
        \bs \omega_h \otimes {\mu_1^{\gamma, +}}(G \times A) : = \int_{G}\mu_1^{\gamma, +}(A) \dif \bs \omega_h(\gamma), \\
         \bs \omega_e \otimes \mu_1^{\gamma, -} (G \times A) : = \int_{G}\mu_1^{\gamma, -}(A) \dif \bs \omega_e(\gamma)
         \end{aligned}
    \end{equation}
    where $\mu_1^{\gamma \pm}$ are defined in \eqref{eq:mu1def}.
    Since $(\mu_0, \mu_1)$ is simultaneously induced by $\bs \omega_h$ and $\bs \omega_e$, it is possible to choose a \textit{transport plan} $\pi^+ \in \msc M((\Gamma \times X )^2)$ which transports $\bs \omega_h \otimes \mu_1^{\gamma, +}$ to $\bs \omega_e \otimes \mu_1^{\gamma, -}$ and which is concentrated on the set
    \begin{equation}\label{eq:setmcG}
        \begin{aligned}
    \mc G : =  \Big\{&  (\gamma, t, x, v, \gamma^{\prime}, t, x, v) \in (\Gamma \times X )^2 \; \big|  \\
    & \gamma^x(t) = \gamma^{\prime, x}(t), \quad v \in [\gamma^v(t+), \gamma^v(t-)] \cap   [\gamma^{\prime,v}(t-),\gamma^{\prime,v}(t+)] \Big\}
            \end{aligned}
    \end{equation}
This follows observing that, because $(\mu_0, \mu_1)$ is simultaneously induced by $\bs \omega_h$ and $\bs \omega_e$,
we have
\begin{equation*}
    \int_{\Gamma}\mu_1^{\gamma, -}\dif \bs \omega_h(\gamma)=
    \mu_1^-=
     \int_{\Gamma}\mu_1^{\gamma, +}\dif \bs \omega_e(\gamma),
     \qquad
     \int_{\Gamma}\mu_1^{\gamma, +}\dif \bs \omega_h(\gamma)=
    \mu_1^+=
     \int_{\Gamma}\mu_1^{\gamma, -}\dif \bs \omega_e(\gamma),
\end{equation*}
and applying
the following Lemma. 
\begin{lemma}[\cite{marconi_burgers}, Lemma 8]
    Denote by $P_1, P_2: (\Gamma \times X)^2 \to (\Gamma \times X)$ the standard projections. Then there exists a plan $\pi^+ \in \msc M((\Gamma \times X)^2)$ with marginals 
    \begin{equation}
    \begin{aligned}
        & [P_1]_{\sharp} \pi^+ = \bs \omega_h \otimes \mu_1^{\gamma, +},\\
        & [P_2]_{\sharp} \pi^+ = \bs \omega_e \otimes \mu_1^{\gamma, -}
        \end{aligned}
    \end{equation}
    concentrated on the set $\mc G$ defined in  \eqref{eq:setmcG}.
\end{lemma}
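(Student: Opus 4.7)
The plan is to construct $\pi^+$ via the classical gluing lemma, using the fact that the two measures $\bs \omega_h \otimes \mu_1^{\gamma, +}$ and $\bs \omega_e \otimes \mu_1^{\gamma, -}$ share a common marginal on $X$. Indeed, under the projection $(\gamma, y) \mapsto y$ from $\Gamma \times X$ onto $X$, the definition of the product measure and the identity $\int_{\Gamma} \mu_1^{\gamma, +} \dif \bs \omega_h(\gamma) = \mu_1^+ = \int_{\Gamma} \mu_1^{\gamma, -} \dif \bs \omega_e(\gamma)$ (which comes from the hypothesis that $(\mu_0,\mu_1)$ is simultaneously induced together with the ``no cancellation'' condition \eqref{eq:optcond}) guarantees that both measures project to $\mu_1^+$.

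Next, I would invoke the disintegration theorem to write
\[
\bs \omega_h \otimes \mu_1^{\gamma, +} \,=\, \int_X \alpha_y \otimes \delta_y \,\dif \mu_1^+(y), \qquad \bs \omega_e \otimes \mu_1^{\gamma, -} \,=\, \int_X \beta_y \otimes \delta_y \,\dif \mu_1^+(y),
\]
where $\{\alpha_y\}_{y \in X}$ and $\{\beta_y\}_{y \in X}$ are Borel-measurable families of probability measures on $\Gamma$. Then define the glued plan
\[
\pi^+ \,:=\, \int_X \bigl(\alpha_y \otimes \delta_y\bigr) \otimes \bigl(\beta_y \otimes \delta_y\bigr) \,\dif \mu_1^+(y) \,\in\, \msc M((\Gamma \times X)^2).
\]
By construction the marginals are as required, and $\pi^+$ is supported on the diagonal $\{(y, y') : y = y'\}$ of the $X \times X$ factor. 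Thus in each quadruple $(\gamma, t, x, v, \gamma', t', x', v')$ in the support we automatically have $(t, x, v) = (t', x', v')$, which already accounts for the equality of times and the common $v$ appearing in the definition of $\mc G$.

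The remaining content of the concentration claim is the condition $\gamma^x(t) = x = \gamma'^x(t)$ and $v \in [\gamma^v(t+), \gamma^v(t-)] \cap [\gamma'^v(t-), \gamma'^v(t+)]$. This is verified by reading off the support of $\mu_1^{\gamma, \pm}$ from \eqref{eq:mu1def}: for $\bs \omega_h$-a.e.\ hypograph curve $\gamma$, the positive part $\mu_1^{\gamma, +}$ is concentrated on $\{(t, \gamma^x(t), v) : t \in J_\gamma,\; v \in [\gamma^v(t+), \gamma^v(t-)]\}$ (the downward-jump term), and symmetrically, for $\bs \omega_e$-a.e.\ epigraph curve $\gamma'$, the negative part $\mu_1^{\gamma', -}$ is concentrated on $\{(t, \gamma'^x(t), v) : t \in J_{\gamma'},\; v \in [\gamma'^v(t-), \gamma'^v(t+)]\}$. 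Since these conditions on $(\gamma, y)$ (respectively $(\gamma', y)$) are Borel, they persist under disintegration: for $\mu_1^+$-a.e.\ $y = (t, x, v)$, the measure $\alpha_y$ is concentrated on curves $\gamma$ with $\gamma^x(t) = x$ and $v \in [\gamma^v(t+), \gamma^v(t-)]$, and analogously for $\beta_y$. This gives $\pi^+(\mc G) = \|\pi^+\|$.

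The main conceptual obstacle is the last step: recognizing that ``$\gamma$ is responsible for mass at $y$'' is Borel-measurable in $y$, and exploiting the good Lagrangian representation hypothesis so that the supports of $\mu_1^{\gamma, \pm}$ can be described curve-by-curve as above with no ambiguity from cancellation. The gluing itself is a standard measure-theoretic construction once the common marginal is identified.
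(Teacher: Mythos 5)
Your proof is correct, and it recovers the argument that the paper delegates to \cite{marconi_burgers} (the paper itself only cites Lemma~8 of that reference and gives no proof). The gluing lemma applied to the common marginal $\mu_1^+$ on $X$ is indeed the natural --- and essentially the only --- construction here, and your verification of the marginals and of the concentration on $\mc G$ via disintegration is sound. In particular, you correctly isolate the two inputs that make the gluing work: the ``good'' Lagrangian representation hypothesis \eqref{eq:optcond}, which is what upgrades $\mu_1 = \int\mu_1^\gamma\,\dif\bs\omega$ to the curve-by-curve identity $\mu_1^\pm = \int\mu_1^{\gamma,\pm}\,\dif\bs\omega$ (without it the two product measures would not share the marginal $\mu_1^+$), and the curve-wise support description of $\mu_1^{\gamma,\pm}$ from \eqref{eq:mu1def}.

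One small imprecision worth flagging, though it does not affect the validity of the argument: you describe $\mu_1^{\gamma,+}$ as concentrated on $\{(t,\gamma^x(t),v): t\in J_\gamma,\ v\in[\gamma^v(t+),\gamma^v(t-)]\}$ and call it ``the downward-jump term,'' but $\mu_1^{\gamma,+}$ also contains the negative part of the diffuse derivative $\wt D\gamma^v$ pushed forward by $(\mathbb I,\gamma)$, which is supported at continuity times of $\gamma^v$. The concentration on $\mc G_h^+$ still holds, because at continuity times the interval $[\gamma^v(t+),\gamma^v(t-)]$ degenerates to the singleton $\{\gamma^v(t)\}$ and the diffuse pushforward sits exactly on the graph $\{(t,\gamma^x(t),\gamma^v(t))\}$; so simply drop the restriction $t\in J_\gamma$ from your support description and the rest goes through unchanged. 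With that fixed, the passage from the supports of $\bs\omega_h\otimes\mu_1^{\gamma,+}$ and $\bs\omega_e\otimes\mu_1^{\gamma,-}$ to the a.e.\ concentration of the disintegrations $\alpha_y$, $\beta_y$ is the standard consequence of $\int_X\alpha_y\big(\{\gamma:(\gamma,y)\notin\mc G_h^+\}\big)\,\dif\mu_1^+(y)=0$, exactly as you say.
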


\vspace{0.5cm}
\textbf {2.}
In this step, we decompose the plan $\pi^+$ constructed in Step 1 into  a countable sum of components.  First, consider the set $I =\{v \; : \; f^{\second}(v) =0\}$, and its complement $(0, 1) \setminus I$. Since  $(0, 1) \setminus I$ is an open set, we can write it as the union of countably many disjoint intervals $A_l$:
$$
(0, 1) \setminus I = \bigcup_{l=1}^{\infty} A_l, \qquad A_l \subset (0, 1) \quad \text{disjoint intervals}
$$
Then we will decompose $\pi^+$ as
\begin{equation}
    \pi^+ = \sum_{l \in\mathbb N} \pi^+_l + \pi^+_{J}
\end{equation}
where, loosely speaking, $\pi^+_l$ is supported in the set  $(\gamma,t,x, v,  \gamma^{\prime}, t, x, v^{\prime})$  where at the point $(t, x)$ the curves $\gamma, \gamma^{\prime}$ satisfy 
$$
\{ \gamma^v(t+), \gamma^v(t-), \gamma^{\prime,v}(t+), \gamma^{\prime,v}(t-)\} \subset A_l;
$$
 while $\pi^+_J$ represents the remaining curves. In the following of this step we formalize this discussion.

By definition, the measure $\bs \omega_h \otimes \mu_1^{+, \gamma}$ (recall also \eqref{eq:mu1def}) is concentrated  on the set 
\begin{equation}
\begin{aligned}
    \mc G_h^+ : = \Big\{(\gamma, t, x, v) \in \Gamma \times X \; \Big| \; \gamma^x(t) = x, \; v \in [\gamma^v(t+), \gamma^v(t-)]\Big\}.
    \end{aligned}
\end{equation}
Analogously, the measure $\bs \omega_e \otimes \mu_1^{-, \gamma}$  is concentrated  on the set 
\begin{equation}
\begin{aligned}
    & \mc G_e^- : = \Big\{(\gamma^{\prime}, t^{\prime}, x^{\prime}, v^{\prime}) \in \Gamma \times X \;\Big| \; \gamma^x(t^\prime) = x^\prime, \; v^\prime \in [\gamma^v(t^\prime-), \gamma^v(t^\prime+)]\Big\}.
    \end{aligned}
\end{equation}
We define the sets, for each $l \in \mathbb N$,
\begin{equation}
\begin{aligned}
    & \mc G^+_{h, l} : = \Big\{(\gamma, t, x, v) \in \mc G^+_h \; \Big| \; \gamma^v(t-), \gamma^v(t+) \in A_l\Big\},\\
& \mc G^-_{e, l} : = \Big\{(\gamma^{\prime}, t^{\prime}, x^{\prime}, v^{\prime}) \in \mc G^-_e \; \Big| \; \gamma^v(t-), \gamma^v(t+) \in A_l\Big\}.
    \end{aligned}
\end{equation}
and the sets
\begin{equation}
    \begin{aligned}
  & \mc G^+_{h, J} : = \Big\{(\gamma, t, x, v) \in \mc G^+_h \; \Big| \; \gamma^v(t-)> \gamma^v(t+) \Big\},\\
& \mc G^-_{e, J} : = \Big\{((\gamma^{\prime}, t^{\prime}, x^{\prime}, v^{\prime})\in \mc G^-_e \; \Big| \; \gamma^v(t^\prime-)< \gamma^v(t^\prime+) \Big\}.
    \end{aligned}
\end{equation}
Finally we define
\begin{equation}
\begin{aligned}
    & \pi^+_l := \pi^+ \llcorner (  \mc G^+_{h, l} \times \mc G^-_{e, l}), \\
    & \pi^+_J : = \pi^+ \llcorner \Bigg( \Big((\mc G_h^+\times \mc G^-_{e, J} ) \cup ( \mc G^+_{h, J} \times \mc G_e^-)\Big) \setminus \bigcup_{\ell}  ( \mc G^+_{h, l} \times \mc G^-_{e, l}) \Bigg).
    \end{aligned}
\end{equation}
By Step 1 it holds
$$
\pi^+ = \sum_{l \in\mathbb N} \pi^+_l + \pi^+_{J} + \pi^+_I
$$
where 

$$
\pi^+_I : = \pi^+\llcorner (\mc G^+_{h,I} \times \mc G^-_{e,I}),
$$
$$
\mc G^+_{h,I} := \Big\{ (\gamma, t, x, v) \in \mc G^+_h \Big| \gamma(t-) = \gamma(t+) \in I\Big\},
$$
$$
\mc G^-_{e,I} := \Big\{ (\gamma^{\prime}, t^{\prime}, x^{\prime}, v^{\prime}) \in \mc G^+_h \Big| \gamma(t^{\prime}-) = \gamma(t^{\prime}+) \in I\Big\}.
$$
We claim that $\pi^+_I = 0$ so that actually 
$$
\pi^+ = \sum_{l \in\mathbb N} \pi^+_l + \pi^+_{J}.
$$
This follows by the following Lemma about functions of bounded variation (see e.g. \cite{AFP_book}).
\begin{lemma}
    Let $v:(a, b) \to (0, 1)$ be a $BV$ function. Then, if $\wt D v$ is the diffuse part of the measure $Dv$, for any set $I \subset (0, 1)$ of zero $\msc L^1$ measure, it holds
    $$
    \wt D v(v^{-1}(I)) = 0
    $$
\end{lemma}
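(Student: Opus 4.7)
My plan is to prove the stronger statement $|\wt D v|\bigl(v^{-1}(I)\bigr)=0$, from which the (signed) identity in the lemma follows immediately. The key tool will be the coarea formula for scalar $BV$ functions, applied after passing to the precise representative of $v$.

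First I would reduce to the precise representative $\tilde v$ of $v$. Let $J_v\subset(a,b)$ denote the at most countable jump set of $v$. Since $\wt D v$ is by construction the atomless part of $Dv$, one has $|\wt D v|(J_v)=0$; and outside $J_v$ the function $v$ coincides pointwise with $\tilde v$. Hence
\[
|\wt D v|\bigl(v^{-1}(I)\bigr)\;=\;|\wt D v|\bigl(v^{-1}(I)\setminus J_v\bigr)\;=\;|\wt D v|\bigl(\tilde v^{-1}(I)\setminus J_v\bigr),
\]
and it suffices to show $|\wt D v|\bigl(\tilde v^{-1}(I)\bigr)=0$.

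Next I would invoke the coarea formula for scalar $BV$ functions (\cite[Theorem 3.40]{AFP_book}): for every Borel set $A\subset(a,b)$,
\[
|\wt D v|(A)\;=\;\int_{\R}\msc H^{0}\!\left((\partial^{*}\{v>s\}\setminus J_v)\cap A\right)ds.
\]
A classical fine-structure property of $BV$ functions (\cite[\S 3.7--3.8]{AFP_book}) guarantees that every $t\in\partial^{*}\{v>s\}\setminus J_v$ satisfies $\tilde v(t)=s$. Therefore, choosing $A=\tilde v^{-1}(I)$, the set inside the integrand is empty whenever $s\notin I$, and
\[
|\wt D v|\bigl(\tilde v^{-1}(I)\bigr)\;=\;\int_{I}\msc H^{0}\!\left((\partial^{*}\{v>s\}\setminus J_v)\cap\tilde v^{-1}(I)\right)ds\;=\;0,
\]
the last equality because the integrand is $\msc L^1$-measurable and finite for a.e.\ $s$ (again by coarea applied with $A=(a,b)$, since $|Dv|((a,b))<+\infty$), while $|I|=0$.

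The only substantive ingredient is the identification of $\partial^{*}\{v>s\}\setminus J_v$ with a subset of the level set $\{\tilde v=s\}$, which is where the fine properties of $BV$ functions genuinely enter; everything else is routine bookkeeping. I do not expect any real obstruction, as the statement is classical in the $BV$ literature.
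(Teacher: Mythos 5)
Your proof is correct. The paper does not actually give an argument for this lemma; it simply refers the reader to Ambrosio--Fusco--Pallara, so there is no in-text proof to compare against. Your route via the one-dimensional coarea formula
\[
|\wt D v|(A)=\int_{\R}\mathcal H^{0}\bigl((\partial^{*}\{v>s\}\setminus J_v)\cap A\bigr)\,\dif s
\]
together with the observation that $\partial^{*}\{v>s\}\setminus J_v\subset\{\tilde v=s\}$ is clean, and the bookkeeping (subtracting the jump contribution $\int_{\R}\mathcal H^{0}(\partial^{*}\{v>s\}\cap J_v\cap A)\,\dif s=|D^{j}v|(A)$ from the full coarea identity to isolate $|\wt Dv|$) is accurate.

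One point worth making explicit: the reduction ``outside $J_v$ the function $v$ coincides pointwise with $\tilde v$'' is valid only when $v$ is the good (precise) representative of its $BV$ class. This is not a gap in your proof but a hidden hypothesis in the lemma itself: for an arbitrary $\mathcal L^1$-a.e.\ representative the statement is false (modify the Cantor function to take the constant value $1/2$ on the Cantor set; then $v^{-1}(\{1/2\})$ carries the full Cantor mass of $\wt Dv$ while $\{1/2\}$ is null). In the paper's application $v=\gamma^{v}$ is a $BV$ curve with well-defined one-sided limits, so the good-representative convention is in force; still, it would strengthen the write-up to state this assumption explicitly rather than leave it implicit in the phrase ``coincides pointwise with $\tilde v$.''
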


\vspace{0.5cm}
\textbf{3.}
Using the construction of Step 2, we now prove that for every $l \in \mathbb N$, the measure
\begin{equation}
    \nu^+_{1,l}: = [P_{t,x}]_{\sharp} \pi^+_l
\end{equation}
is concentrated on a 1-rectifiable set, where $P_{t,x} : (\Gamma \times X) ^2 \to (0, T) \times \mathbb R$ is the projection on the first two variables $(\gamma, t, x, v, \gamma^{\prime}, t^{\prime}, x^{\prime}, v^{\prime} ) \mapsto (t, x)$.

We fix $l \in \mathbb N$, and prove that $\nu^+_{1,l}$ is concentrated on a 1-rectifiable set. Without loss of generality we can assume that $A_l \subset (0, 1)$ is such that $f^{\second}(v) > 0$ for every $v \in A_l$. The other case is completely symmetric.
We start with two preliminary results, that are proved in \cite{marconi_burgers}. 

The first lemma formalizes the intuitive fact that, in an interval $A_l$ where the flux is strictly convex, generically, Lagrangian curves representing the hypograph cannot cross from the left curves representing the epigraph.
\begin{lemma}\label{lemma:nocrossing}
    Let $\Gamma_h, \Gamma_e$ as in Lemma \ref{lemma:goodcurves}. Let $(\bar \gamma, I_{\bar \gamma}) \in \Gamma_h$ and let $(a, b) \subset I_{\bar \gamma}$ 
 be such that $\bar \gamma^v((a, b) )\subset A_l$. Let moreover $G \subset \Gamma_e$ be a set of curves $(\gamma, I_{\gamma})$ such that there exist $a < s^1_{\gamma} < s^2_{\gamma}< b$ with
\begin{equation}\label{eq:crossingcond}
    \gamma^x(s^1_{\gamma}) > \bar \gamma^x (s^1_{\gamma}), \qquad \gamma^x(s^2_{\gamma}) <  \bar \gamma^x (s^2_{\gamma}), \qquad \gamma^v(s^1_{\gamma}, s^2_{\gamma}) \subset A_l.
\end{equation}
Then 
\begin{equation}
    \bs \omega_e(G) = 0.
\end{equation}
\end{lemma}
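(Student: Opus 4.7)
The plan is to argue by contradiction. Assume $\bs \omega_e(G)>0$ and, up to replacing $G$ with a $\bs \omega_e$-full measure subset, assume every $\gamma\in G$ lies in $\Gamma_e$ and $\bar\gamma\in\Gamma_h$, so that Lemma \ref{lemma:goodcurves} applies to both: for $\msc L^1$-a.e.\ $t$, $(t,\gamma^x(t))$ and $(t,\bar\gamma^x(t))$ are Lebesgue points of $u$ with $\gamma^v(t)>u(t,\gamma^x(t))$ and $\bar\gamma^v(t)<u(t,\bar\gamma^x(t))$.

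For each $\gamma\in G$ I would consider $h_\gamma(t) := \gamma^x(t) - \bar\gamma^x(t)$, Lipschitz on $(s^1_\gamma,s^2_\gamma)\subset(a,b)$. Since $h_\gamma(s^1_\gamma)>0>h_\gamma(s^2_\gamma)$, by continuity there exists a largest zero $\tau_\gamma\in(s^1_\gamma,s^2_\gamma)$, past which $h_\gamma$ is strictly negative. The characteristic ODE \eqref{eq:charaeqla} gives
\[
h_\gamma(t) = \int_{\tau_\gamma}^t \big[f^\prime(\gamma^v(s)) - f^\prime(\bar\gamma^v(s))\big]\,\mathrm{d}s < 0, \qquad t\in(\tau_\gamma, s^2_\gamma],
\]
hence a set $E_\gamma\subset(\tau_\gamma,s^2_\gamma)$ of positive Lebesgue measure, clustering at $\tau_\gamma^+$, on which $f^\prime(\gamma^v(s))<f^\prime(\bar\gamma^v(s))$. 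Since both $\gamma^v$ and $\bar\gamma^v$ lie in $A_l$, where $f^{\prime\prime}>0$ and $f^\prime$ is strictly increasing, this yields $\gamma^v(s)<\bar\gamma^v(s)$ on $E_\gamma$. Combining with Lemma \ref{lemma:goodcurves},
\[
u(s,\gamma^x(s))<\gamma^v(s)<\bar\gamma^v(s)<u(s,\bar\gamma^x(s)), \qquad \gamma^x(s)<\bar\gamma^x(s),
\]
for a.e.\ $s\in E_\gamma$.

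To close the argument I would integrate this pointwise inequality against $\bs \omega_e|_G$ via a Fubini/disintegration argument in the variables $(\gamma,s)$. The crossing map $\gamma\mapsto(\tau_\gamma,\bar\gamma^x(\tau_\gamma))$ is measurable; restricting further to the (still positive $\bs \omega_e$-measure) subset of curves whose crossing $(\tau_\gamma,\bar\gamma^x(\tau_\gamma))$ is a Lebesgue point of $u$, the inversion of the $v$-ordering between the hypograph sheet ($\bar\gamma^v<u$) and the epigraph sheet ($\gamma^v>u$) at the crossing forces contributions to $\mu_1^{\gamma,-}$ and $\mu_1^{\gamma,+}$ with opposite signs supported at the same $(t,x,v)$. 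Under the simultaneous induction of $(\mu_0,\mu_1)$ by $\bs \omega_h$ and $\bs \omega_e$ (Proposition \ref{prop:goodpairs}), these contributions cancel in $\mu_1$ but add up in $\int|\mu_1^\gamma|\,\mathrm{d}\bs \omega(\gamma)$, contradicting the goodness identity \eqref{eq:optcond}; equivalently, they can be pruned to produce a pair strictly smaller than $(\mu_0,\mu_1)$ for $\preceq$, against minimality.

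The main obstacle is this last step: the pointwise inequality $\gamma^v<\bar\gamma^v$ on $E_\gamma$ is not itself contradictory, because both sides may converge to the common limit $u(\tau_\gamma,\bar\gamma^x(\tau_\gamma))$ while remaining strictly ordered. The difficulty is to convert the positive $\bs \omega_e$-mass of $G$ into a quantitative cancellation in \eqref{eq:optcond}, while handling the measurability of $\gamma\mapsto\tau_\gamma$ and the possibly simultaneous BV jumps of $\gamma^v$ and $\bar\gamma^v$ at the crossing; this is the technical ingredient imported from \cite{marconi_burgers}.
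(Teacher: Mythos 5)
The paper does not actually contain a proof of this Lemma: it defers to \cite[Proposition 6]{marconi_burgers} with the remark that the same argument works for strictly convex fluxes, so there is no internal proof to compare you against. Assessing your attempt on its own merits: the first part (through the chain of inequalities on $E_\gamma$) is correct and is exactly the right local comparison. Taking the last zero $\tau_\gamma$ of $h_\gamma=\gamma^x-\bar\gamma^x$, using the characteristic ODE and strict monotonicity of $f'$ on $A_l$ to produce a positive-measure set $E_\gamma\subset(\tau_\gamma,s^2_\gamma)$ on which $\gamma^v<\bar\gamma^v$ while $\gamma^x<\bar\gamma^x$, and then invoking Lemma \ref{lemma:goodcurves} to conclude $u(s,\gamma^x(s))<\gamma^v(s)<\bar\gamma^v(s)<u(s,\bar\gamma^x(s))$ — all of this is fine.

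The gap is the closure, and you are right to flag it, but the mechanism you propose is not correct. You claim that the inversion of the $v$-ordering ``at the crossing'' forces contributions to $\mu_1^{\gamma,+}$ and $\mu_1^{\gamma',-}$ with opposite signs at the same $(t,x,v)$, contradicting the goodness identity \eqref{eq:optcond} or minimality for $\preceq$. This does not follow from what you proved, for two separate reasons. First, the ordering violation $\gamma^v(s)<\bar\gamma^v(s)$ is established only at times $s\in E_\gamma$ strictly \emph{after} $\tau_\gamma$, and at such times the two curves sit at distinct spatial points $\gamma^x(s)\neq\bar\gamma^x(s)$; so whatever the measures $\mu_1^{\gamma}$ and $\mu_1^{\bar\gamma}$ charge there, they do not sit at a common $(t,x,v)$ and there is nothing to cancel. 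Second, by its very definition \eqref{eq:mu1def} the measure $\mu_1^\gamma$ only charges jump times of $\gamma^v$ and the Cantor part of $D\gamma^v$; at the single instant $\tau_\gamma$ (which, being a single time, escapes the ``a.e.\ $t$'' control of Lemma \ref{lemma:goodcurves}) there is no reason for $\gamma^v$ or $\bar\gamma^v$ to jump or have singular variation, hence no contribution is forced there at all. You also correctly observe that the bare chain $u(s,\gamma^x(s))<\gamma^v(s)<\bar\gamma^v(s)<u(s,\bar\gamma^x(s))$ with $\gamma^x(s)<\bar\gamma^x(s)$ is not contradictory by itself — it only says $u(s,\cdot)$ increases between the two spatial points, which is entirely possible. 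So as written the proposal proves the correct preparatory comparison but does not prove the lemma; the actual mechanism that converts the positive $\bs\omega_e$-mass of $G$ into a contradiction is precisely the content of \cite[Proposition 6]{marconi_burgers}, and it is not the cancellation-in-$\mu_1$ scenario you describe.
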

The proof of the Lemma can be found in \cite[Proposition 6]{marconi_burgers} in the case of Burgers equation, but it  is the same for strictly convex fluxes, therefore we omit the proof. 

Given $\bar t, \bar x$, let $G^l_{\bar t,\bar x}\subset \Gamma_h$ be the set of curves $(\gamma, I_{\gamma})$ such that $\bar t \in I_{\gamma}$, $\gamma^v(\bar t) \in A_l$, and $\gamma^x(\bar t) < \bar x$. Analogously, let $G^r_{\bar t,\bar x}\subset \Gamma_e$ be the set of curves $(\gamma, I_{\gamma})$ such that $\bar t \in I_{\gamma}$, $\gamma^v(\bar t) \in A_l$, and $\gamma^x(\bar t) > \bar x$.

Now we construct the candidate Lipschitz curves on which $\nu^+_{1,l}$ is concentrated, \cite[Corollary 7]{marconi_burgers}.
\begin{lemma}
 To each $(\gamma, I_{\gamma}) \in \Gamma_h$ associate the time $t_{\gamma}^l \in (\bar t, t^2_{\gamma}]$ defined as 
$$
t^l_{\gamma}: = \sup \big\{t \in [\bar t, t^2_{\gamma}) \; \big| \;\gamma^v(s) \in A_l \; \forall \; s \in (\bar t, t) \big\}
$$
where we understand that the sup of the empty set is $\bar t$.
Define the  curve
$$
    \tilde f_{\bar t, \bar x}^l(t) : = \sup_{\{\gamma \in G^l_{\bar t, \bar x} \; : \;  t \in (\bar t, \;  t^l_{\gamma})\}} \gamma^x(t), \qquad t \in [\bar t, T)
$$
and its upper Lipschitz envelope $f^l_{\bar t, \bar x} : [\bar t, T) \to \mathbb R$
\begin{equation}
    f^l_{\bar t, \bar x}(t) : = \inf \big\{g(t) \; \big| \;  g:[\bar t, T] \to \mathbb R \; \text{is $|f^{\prime}|_{\infty}$-Lipschitz and $g \geq \tilde f_{\bar t, \bar x}^l$ in $ [\bar t, T)$}\big\}. 
\end{equation}
Then, for every $t \in [\bar t, T)$, it holds
\begin{equation}
    \begin{aligned}
        &  \bs \omega_h\Big( \big\{ \gamma \in G^l_{\bar t, \bar x} \; \big| \; \gamma^x(t) > f^l_{\bar t, \bar x}(t), \quad t \in [\bar t, t^l_{\gamma})] \big\}\Big) = 0\\
        & \bs \omega_e\Big( \big\{ \gamma \in G^r_{\bar t, \bar x} \; \big| \; \gamma^x(t) < f^l_{\bar t, \bar x}(t), \quad t \in [\bar t, t^l_{\gamma}) \big\}\Big) = 0.
    \end{aligned}
\end{equation}
\end{lemma}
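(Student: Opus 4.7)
The first claim is essentially bookkeeping. Set $L := |f'|_{\infty}$; every Lagrangian trajectory has $\gamma^x$ Lipschitz with constant $L$ by the characteristic equation $\dot\gamma^x = f'(\gamma^v)$. Thus for $\bar\gamma \in G^l_{\bar t, \bar x}$ and $t \in (\bar t, t^l_{\bar\gamma})$ the value $\bar\gamma^x(t)$ is a competitor in the supremum defining $\tilde f^l_{\bar t, \bar x}(t)$, so $\bar\gamma^x(t) \leq \tilde f^l_{\bar t, \bar x}(t) \leq f^l_{\bar t, \bar x}(t)$; continuity of $\bar\gamma^x$ and of the $L$-Lipschitz envelope $f^l_{\bar t, \bar x}$ handles the endpoint $t = \bar t$.

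For the second claim my plan is to argue by contradiction: assume that the set $G$ of bad epigraph curves has $\bs\omega_e(G) > 0$. The key step that I would establish first is a separation statement: for every $\gamma \in G$ with $\gamma^x(t) < f^l_{\bar t, \bar x}(t)$ at some $t \in [\bar t, t^l_\gamma)$, there exist $\bar\gamma \in G^l_{\bar t, \bar x}$ and a time $s$ in the good window $\bigl(\bar t, \min(t^l_\gamma, t^l_{\bar\gamma})\bigr)$ with $\bar\gamma^x(s) > \gamma^x(s)$. Granting this, combining with $\bar\gamma^x(\bar t) < \bar x < \gamma^x(\bar t)$ yields, by continuity, a crossing time $r^* \in (\bar t, s)$ at which $\gamma^x$ passes from above to below $\bar\gamma^x$; by the constraint on $s$, both $\gamma^v$ and $\bar\gamma^v$ take values in $A_l$ on $(\bar t, s)$. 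A countable measurable decomposition of $G$ using a dense countable family of such pairs $(\bar\gamma, s)$ then reduces the matter to applications of Lemma \ref{lemma:nocrossing} on each piece, delivering the contradiction $\bs\omega_e(G) = 0$.

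The separation step is the heart of the argument, and I would establish it by contradiction as well. Suppose instead that $\tilde f^l_{\bar t, \bar x}(s) \leq \gamma^x(s)$ for every $s \in (\bar t, t^l_\gamma]$. Define the $L$-Lipschitz continuation $g : [\bar t, T) \to \mathbb R$ by $g(r) := \gamma^x(r)$ for $r \in [\bar t, t^l_\gamma]$ and $g(r) := \gamma^x(t^l_\gamma) + L(r - t^l_\gamma)$ for $r \in (t^l_\gamma, T)$; then $g$ majorises $\tilde f^l_{\bar t, \bar x}$ on $[\bar t, t^l_\gamma]$. Minimality of $f^l_{\bar t, \bar x}$ together with the standing hypothesis $\gamma^x(t) < f^l_{\bar t, \bar x}(t)$ at some $t \leq t^l_\gamma$ forces $g$ to fail as a global majorant, so there exist $s > t^l_\gamma$ and $\bar\gamma'' \in G^l_{\bar t, \bar x}$ with $s \in (\bar t, t^l_{\bar\gamma''})$ and $\bar\gamma''^{\,x}(s) > g(s) = \gamma^x(t^l_\gamma) + L(s - t^l_\gamma)$. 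Propagating this backward via the $L$-Lipschitz property of $\bar\gamma''^{\,x}$ yields $\bar\gamma''^{\,x}(t^l_\gamma) > \gamma^x(t^l_\gamma)$, which contradicts $\bar\gamma''^{\,x}(t^l_\gamma) \leq \tilde f^l_{\bar t, \bar x}(t^l_\gamma) \leq \gamma^x(t^l_\gamma)$ (valid since $t^l_\gamma \in (\bar t, t^l_{\bar\gamma''})$). The measurable selection underlying the countable decomposition follows standard arguments in the spirit of \cite{marconi_burgers}, and checking it precisely will be the only remaining technical point.
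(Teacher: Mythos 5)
Your proof is correct and follows the route the paper intends (the paper itself omits the argument, citing \cite[Corollary 7]{marconi_burgers}, but your use of the Lipschitz envelope together with Lemma \ref{lemma:nocrossing} is exactly the setup prepared there). The separation step via the $L$-Lipschitz continuation $g$ of $\gamma^x$ is clean: if $g$ were a global majorant it would contradict $\gamma^x(t) < f^l_{\bar t, \bar x}(t)$ through the minimality of $f^l_{\bar t, \bar x}$, and the failure of majorisation past $t^l_\gamma$ produces the competitor $\bar\gamma''$ whose backward Lipschitz propagation contradicts the standing bound $\tilde f^l_{\bar t,\bar x}(t^l_\gamma)\le \gamma^x(t^l_\gamma)$. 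The countable decomposition you flag is indeed routine: for rationals $q,r$ with $\bar t<r<T$, on the set of bad $\gamma$ with $r<t^l_\gamma$ and $\gamma^x(r)<q<\tilde f^l_{\bar t,\bar x}(r)$ one may fix a single $\bar\gamma(q,r)\in G^l_{\bar t,\bar x}$ with $\bar\gamma(q,r)^x(r)>q$ and $r<t^l_{\bar\gamma(q,r)}$, and apply Lemma \ref{lemma:nocrossing} to that fixed curve; the union over $(q,r)\in\mathbb Q^2$ exhausts all bad curves by your separation step and density.
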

The following Lemma is a general result about functions of bounded variation (see \cite{AFP_book}).
\begin{lemma}\label{lemma:bvlemma}
Let $v : (a, b) \to \mathbb R$ be a $BV$ function and denote by $D^- v$ the negative part of the measure $D v$. Then for $\tilde D^- v$-a.e. $\bar x \in (a, b)$ there exists a $\delta > 0$ such that
    $$
    v(x) > v(\bar x) \quad \forall \; x \in (\bar x-\delta, \bar x), \qquad v(x) < v(\bar x) \quad \forall \; x \in (\bar x, \bar x +\delta)
    $$
\end{lemma}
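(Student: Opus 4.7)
The plan is to prove the statement via the Jordan decomposition of $v$ combined with a differentiation argument for mutually singular measures on the line. I would write $v = p - n$, where $p, n : (a,b) \to \R$ are non-decreasing and $Dp, Dn$ are the mutually singular positive and negative parts of $Dv$; in particular $\tilde D^- v = \tilde D n$, the diffuse part of the positive measure $Dn$. By left/right symmetry it suffices to prove, for $\tilde D n$-a.e.\ $\bar x$, that $v(x) < v(\bar x)$ on some right-neighbourhood $(\bar x, \bar x + \delta)$.

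The strategy is to exclude a $\tilde D n$-negligible exceptional set $\mc N$ consisting of three pieces: (i) the countable set of jump points of $v$, which ensures continuity of $v$ at $\bar x$; (ii) the countable set of right-endpoints of maximal open intervals on which $n$ is constant, which ensures that $F_n(r) := n(\bar x + r) - n(\bar x) > 0$ for every $r > 0$; and (iii) the points at which $\limsup_{r \downarrow 0^+} Dp((\bar x, \bar x + r])/Dn((\bar x, \bar x + r]) > 0$. Items (i) and (ii) are countable and hence $\tilde D n$-null, since $\tilde D n$ is non-atomic. For item (iii), the key is that $Dp \perp Dn$, so by the Besicovitch differentiation theorem in $\R$, in its one-sided form on the real line (cf.\ \cite{AFP_book}), the density in (iii) vanishes at $Dn$-a.e.\ $\bar x$, and therefore at $\tilde D n$-a.e.\ $\bar x$.

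For $\bar x \notin \mc N$, continuity of $v$ at $\bar x$ gives $v(\bar x + r) - v(\bar x) = F_p(r) - F_n(r)$ with $F_p(r) := p(\bar x + r) - p(\bar x)$. Since $\bar x$ is not an atom of $Dp$ or $Dn$, one has $F_p(r) = Dp((\bar x, \bar x + r])$ and $F_n(r) = Dn((\bar x, \bar x + r])$. Combining (ii) and (iii), $F_n(r) > 0$ for all $r > 0$ and $F_p(r)/F_n(r) \to 0$ as $r \downarrow 0^+$, so for some $\delta > 0$ one has $F_p(r) < F_n(r)$ throughout $(0, \delta)$, giving the desired strict inequality. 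The left-sided statement is completely symmetric. The only genuinely technical point I expect is the one-sided version of Besicovitch differentiation on $\R$ needed in (iii); the rest is routine bookkeeping around the Jordan decomposition and the countable families of constancy intervals of $n$.
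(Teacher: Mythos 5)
The paper states this lemma without proof, attributing it wholesale to \cite{AFP_book} as a ``general result about functions of bounded variation,'' so there is no proof in the paper to compare against. Your plan is the natural one and structurally correct: Jordan decomposition $v=p-n$ with $Dp\perp Dn$, so $D^-v=Dn$ and $\tilde D^-v=\tilde Dn$; remove the atoms of $Dv$ (countable, $\tilde Dn$-null), remove the endpoints of constancy intervals of $n$ (countable, $\tilde Dn$-null), and differentiate $Dp$ against $Dn$. Two remarks.

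First, a minor bookkeeping slip: for the right-sided inequality you must exclude $\bar x$ that are \emph{left}-endpoints of maximal constancy intervals of $n$ (those are the points with $Dn((\bar x,\bar x+r])=0$ for small $r>0$), not the right-endpoints as you wrote; since both families are countable it is harmless to simply exclude all endpoints, but as written the justification for $F_n(r)>0$ does not quite match the set you exclude.

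Second, and this is the real issue: the ``one-sided form'' of the Besicovitch differentiation theorem that you invoke in step (iii) is not something you can read off from \cite{AFP_book}. The theorem there (and in most references) is the symmetric version with balls $B_r(\bar x)$, and the symmetric statement does \emph{not} imply the one-sided one for a general reference measure, because $Dn\big((\bar x-r,\bar x)\big)$ may be much larger than $Dn\big((\bar x,\bar x+r]\big)$, so the two-sided bound on $Dp/Dn$ gives no control over the one-sided ratio. The one-sided version of the theorem is nevertheless true on $\R$, but it needs an argument. A clean one is the monotone reparametrization trick: set $\lambda=Dp+Dn$, $\Phi(x)=x+\lambda((c,x])$; then $\Phi$ is strictly increasing, $\Phi_\sharp Dp$ and $\Phi_\sharp Dn$ are mutually singular and both absolutely continuous with respect to $\mathscr L^1$ on the range (their densities multiply to $0$ a.e.), the one-sided Lebesgue differentiation theorem for $\mathscr L^1$ does hold, and one pulls the conclusion back through $\Phi$ at the points where $\Phi$ is continuous and strictly increasing on both sides (which is everything outside the two countable sets you already exclude). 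You correctly single this out as ``the only genuinely technical point,'' but since it is also the point that makes the lemma nontrivial, it deserves a proof rather than a citation: as stated, the proposal has a genuine gap there, and patching it with the reparametrization argument (or an explicit one-sided Vitali covering on $\R$) is what turns your outline into a complete proof.

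Once (iii) is justified, the rest of your argument is sound: for $\bar x$ outside the three exceptional sets, $F_n(r)>0$ for all $r>0$, $F_p(r)/F_n(r)\to 0$ as $r\downarrow 0^+$, hence $v(\bar x+r)-v(\bar x)=F_p(r)-F_n(r)<0$ on some $(0,\delta)$, and the left-sided statement is symmetric.
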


Now that we have all the elements, we divide the proof of Step 3 into further substeps.

\vspace{0.5cm}
\textbf{3.1.} Fix $(\bar t, \bar x) \in (0, T) \times \mathbb R$. Define the sets $\mc G_{\bar t, \bar x}^{h,l}$, $ \mc G^{e,l}_{\bar t, \bar x} \subset \Gamma \times X$ as 
\begin{equation}
\begin{aligned}
    & \mc G^{h,l}_{\bar t, \bar x} : = \big\{(\gamma, t, x, v) \in \Gamma_h \times X  \; : \; t \in (\bar t, t^{l, \bar t}_{\gamma}), \quad \gamma^x(\bar t) < \bar x\big\} \\
& \mc G^{e,l}_{\bar t, \bar x} : = \big\{(\gamma, t, x, v) \in \Gamma_e \times X  \; : \; t \in (\bar t, t^{l, \bar t}_{\gamma}),  \quad \gamma^x(\bar t) > \bar x\big\} .
    \end{aligned}
\end{equation}
Define the measure 
$$
\pi^+_{l,\bar t, \bar x} := \pi^+_l \llcorner \big( \mc G^{h,l}_{\bar t, \bar x} \times \mc G^{e, l}_{\bar t, \bar x}\big)
$$
The main contribution of this step is to show that, letting 
$$\msc F_{\bar t, \bar x}^l : = \{(t,x) \; | \; x = f^l_{\bar t, \bar x}(t), \quad t \in (\bar t, T)\}$$
then
\begin{equation}\label{eq:step31}
    \text{the measure $(P_{t,x})_{\sharp}\pi^+_{l,\bar t, \bar x}$ is concentrated on $\msc F_{\bar t, \bar x}^l $}.
\end{equation}
Let $\Omega^{l,\pm}_{\bar t, \bar x}$ be the two connected components  of $(\bar t, T) \times \mathbb R \setminus \msc F^l_{\bar t, \bar x}$, the left ($-$) and the right ($+$) one, respectively. By definition of $\mc G^{h,l}_{\bar t, \bar x}$ and of $\msc F^l_{\bar t, \bar x}$, and by (4.20), there holds
\begin{equation}\label{eq:step311}
\bs \omega_h \otimes \mu_1^{\gamma,+}\llcorner \mc G^{h,l}_{\bar t, \bar x} \big( \Gamma_h \times \Omega^{l,+}_{\bar t, \bar x}\times (0, 1) \big) = 0.
\end{equation}
Moreover, one has 
\begin{equation}\label{eq:step312}
\begin{aligned}
    [P_{t,x}]_{\sharp}\pi^+_{l,\bar t, \bar x} &  \leq [P_{t,x}]_{\sharp}\pi^+_{l} \llcorner \big( \mc G^{h,l}_{\bar t, \bar x}  \times (\Gamma \times X)\big)\\
    & \\
    & = [p_{t, x}]_{\sharp} \left(  \int_{\Gamma_h} \mu_1^{\gamma,+}\llcorner \mc G^{h,l}_{\bar t, \bar x}   d \bs\omega_h  \right). \\
    \end{aligned}
\end{equation}
It follows from \eqref{eq:step311}, \eqref{eq:step312} that 
\begin{equation}
     [P_{t,x}]_{\sharp}\pi^+_{l,\bar t, \bar x}(\Omega^{l,+}_{\bar t, \bar x}) = 0.
\end{equation}
In an entirely similar way, we prove
\begin{equation}
[P_{t^{\prime},x^{\prime}}]_{\sharp}\pi^+_{l,\bar t, \bar x}(\Omega^{l,-}_{\bar t, \bar x}) = 0.
\end{equation}
Finally, we have that $[P_{t^{\prime},x^{\prime}}]_{\sharp}\pi^+_{l,\bar t, \bar x} =  [P_{t,x}]_{\sharp}\pi^+_{l,\bar t, \bar x}$, because $\pi^+$ is supported in $\mathcal G$ and therefore also $\pi_l^+$ is supported in $\mathcal G$.
Therefore we conclude that 
\begin{equation}
    [P_{t,x}]_{\sharp} \pi^+_{l,\bar t, \bar x}( (\Omega^{l,+}_{\bar t, \bar x} \cup \Omega^{l,-}_{\bar t, \bar x})) = 0
\end{equation}
which means that $[P_{t,x}]_{\sharp} \pi^+_{l,\bar t, \bar x}$ is concentrated on $\msc F^l_{\bar t, \bar x}$.

\vspace{0.5cm}
\textbf{3.2.} We prove that for $\pi^+_l$-a.e. pair $(\gamma, t, x, v, \gamma^{\prime}, t^{\prime}, x^{\prime}, v^{\prime})$ there exists a $\delta > 0$ such that 
\begin{enumerate}
    \item for every $s \in [t-\delta, t)$ it holds $\gamma^x(s)< \gamma^{\prime x}(s)$, 
    \item $\gamma^v(s) \in A_l$ for all $s \in (t-\delta, t]$,
    \item$\gamma^{\prime, v}(s) \in A_l$ for all $s \in (t^{\prime}-\delta, t^{\prime}]$.
\end{enumerate}
In order to prove it, we proceed as follows. By definition of $\pi_l^+$, it holds
\begin{equation}
\begin{aligned}
    \text{for $\pi^+_l$-a.e. pair}&  \text{ $(\gamma, t, x, v, \gamma^{\prime}, t^{\prime}, x^{\prime}, v^{\prime})$  it holds:} \\
    & \begin{cases}
          \gamma^v(t-)\geq\gamma^v(t+) \; \text{and} \; \gamma^v(t-), \gamma^v(t+) \in A_l  \\
         \gamma^{\prime,v}(t-)\leq\gamma^{\prime,v}(t+)\;  \text{and} \; \gamma^{\prime,v}(t-), \gamma^{\prime,v}(t+) \in A_l
    \end{cases}
    \end{aligned}
\end{equation}
Therefore there exists a $\delta_{2, 3} > 0$ such that (2), (3), are satisfied.  To prove (1), we first prove the following claim:
\begin{equation} \label{eq:muminuscla}
    \text{for $\mu^-_{\gamma}$-a.e. $(t, x, v)$, there exists a $\delta > 0$  such that $\gamma^v(s) > v$ for every $s \in (t-\delta, t)$}
\end{equation}
An application of Lemma \ref{lemma:bvlemma} provides a $\tilde D^-\gamma$-negligible subset $N_{\gamma} \subset I_{\gamma}$ such that 
 for every $t \in I_{\gamma} \setminus  N_{\gamma}$ there exists a $\delta$ such that $\gamma^v(s) > \gamma^v(t+)$ for every $s \in (t-\delta, t)$.
Moreover, for every $t \in I_{\gamma}$ in which $\gamma^v$ has a negative jump, for each $v \in [\gamma^v(t+), \gamma^v(t-))$ there exists a $\delta > 0$ such that for every $s \in (t-\delta, t)$ it holds $\gamma^v(s) > v$. Let 
$$
E_{\gamma} = \{(t, x, v) \; : \; \gamma^v(t-) = v> \gamma^v(t+) \}
$$
Since $E_{\gamma}$ is at most countable and $\mu_1^{\gamma,+}$ has no atoms, it follows that $\mu_1^{\gamma,+} \big( E_{\gamma}\big) = 0$. Therefore
$$
\mu_1^{\gamma,+} \big( E_{\gamma} \cup (\mathbb I, \gamma)(N_{\gamma})\big) = 0
$$
and \eqref{eq:muminuscla} is proved. Therefore, using also (2), we obtain the following statement: for $\bs \omega_h \otimes \mu_1^{\gamma,+}$-a.e. $(\gamma, t, x, v)$, there exists a $\delta_h> 0$ ($\delta_h < \delta_2$)  such that 
\begin{equation}
    \text{ for every $s \in (t-\delta_h, t)$ it holds $\gamma^v(s) > v$, and hence $\gamma^x(s) < x-(t-s) f^{\prime}(v)$ } 
\end{equation}
where we used the strict convexity of $f$ in $A_l$ and the characteristic equation for $\gamma$ \eqref{eq:charaeqla}. In an entirely analogous way we prove the symmetric statement: for $\bs \omega_e \otimes \mu_{\gamma}^-$-a.e. $(\gamma, t, x, v)$, there exists a $\delta_e> 0$ ($\delta_e < \delta_3$) such that 
\begin{equation}
    \text{ for every $s \in (t-\delta_h, t)$ it holds $\gamma^v(s) < v$, and hence $\gamma^x(s) > x-(t-s) f^{\prime}(v)$ } 
\end{equation} 
To conclude is sufficient to notice that since $\pi^+$ is concentrated on $\mc G$, then (1) holds with $\delta = \min (\delta_h, \delta_e)$ for $\pi^+_l$-a.e. $(\gamma, t, x, v, \gamma^{\prime}, t^{\prime}, x^{\prime}, v^{\prime})$.

\vspace{0.5cm}
\textbf{3.3.} From Step 3.2, we deduce that for $\pi^+_l$-a.e. pair $(\gamma, t, x, v, \gamma^{\prime}, t^{\prime}, x^{\prime}, v^{\prime})$, there exists a rational pair $(\bar t, \bar x) \in ((0, T) \cap \mathbb Q) \times \mathbb Q$ such that 
$$
(\gamma, t, x, v, \gamma^{\prime}, t^{\prime}, x^{\prime}, v^{\prime}) \in \mc G^{h,l}_{\bar t, \bar x} \times \mc G^{e,l}_{\bar t, \bar x} 
$$
This shows that $\pi^+_l$ is concentrated on 
$$
\bigcup_{\substack{\bar t \in (0, T) \cap \mathbb Q \\ \bar x \in \mathbb Q}} \mc G^{h,l}_{\bar t, \bar x} \times \mc G^{e,l}_{\bar t, \bar x} 
$$
Since holds
\begin{equation}
    \begin{aligned}
          [P_{t,x}]_{\sharp} \pi^+_l&  = \sum_{\substack{\bar t \in (0, T) \cap \mathbb Q \\ \bar x \in \mathbb Q}} [P_{t,x}]_{\sharp} \Big( \pi^+_l \llcorner \mc G^{h,l}_{\bar t, \bar x} \times \mc G^{e,l}_{\bar t, \bar x} \Big)
    \end{aligned}
\end{equation}
by Step 3.1 it follows that $[P_{t,x}]_{\sharp} \pi^+_l = \nu_{1,l}^+$ is concentrated on the 1-rectifiable set 
$$
\bigcup_{\substack{\bar t \in (0, T) \cap \mathbb Q \\ \bar x \in \mathbb Q}} \msc F^l_{\bar t, \bar x} \subset (0, T) \times \mathbb R.
$$

\vspace{0.5cm}
\textbf{4.}
As a final step, we prove that the remaining part 
\begin{equation}
    \nu^+_{1,J} : =  [P_{t, x}]_{\sharp} \pi_J^+
\end{equation}
is concentrated on the set ${\mathbf J}$ of Theorem \ref{thm:dimdthm}, and therefore is concentrated on a 1-rectifiable set.

 For $\bar v \in (0, 1)$, we introduce the following functions which measure the nonlinearity of $f$ near a point $\bar v$. For $\delta > 0$, define
$$
\mathfrak h^-(\bar v, \delta) : = \max \Big\{ h > 0 \; \Big| \; \msc L^1\big( (\bar v-\delta, \bar v) \cap \{ v \; : \; |f^{\prime}(\bar v) -f^{\prime}(v) | \geq 2 h \}\big) \geq h \Big\} 
$$
$$
\mathfrak h^+(\bar v, \delta) : = \max \Big\{ h > 0 \; \Big| \; \msc L^1\big( (\bar v, \bar v+\delta) \cap \{ v \; : \; |f^{\prime}(\bar v) -f^{\prime}(v) | \geq 2 h \}\big) \geq h \Big\}. 
$$
If the flux is weakly genuinely nonlinear in the sense of Definition \ref{def:fluxgenn}, it holds
\begin{equation}
    0 < \mathfrak h^{\pm}(\bar v, \delta) < \delta, \qquad \delta > 0
\end{equation}
For example, if $f(v) = v^2/2$, then for every $\bar v \in (0, 1)$ one has $\mathfrak h^{\pm}(\bar v, \delta) =  \delta/3$.

We have the following Lemma, proved with the same techniques of \cite[Lemma~4.6]{Mar23}.
\begin{lemma}\label{lemma:hypreg}
    Let $(\gamma, I_{\gamma}) \in \Gamma_h$,  let $\bar t \in \bar I_{\gamma}$ and set $\bar x = \gamma^x(\bar t)$. Let 
    $$\bar v = \max\, [\gamma^v(\bar t\pm)],
$$
    where 
    $$
    [\gamma^v(\bar t\pm)]=
    \begin{cases}
      \{\gamma^v(\bar t+),\, \gamma^v(\bar t-)\}
        \  &\text{if}\quad  \bar t \in I_{\gamma},
        \\
        \{\gamma^v(\bar t+)\}
        \   &\text{if}\quad  \bar t = \inf I_\gamma,
        \\
        \{\gamma^v(\bar t-)\}
        \   &\text{if}\quad  \bar t = \sup I_\gamma.
    \end{cases}
    $$
    
    Then there exists a constant $c$ depending only on $\Vert f^{\second}\Vert_{\infty}$ and $\delta_1$ depending on $\gamma$ such that for all $\delta < \delta_1$ at least one of the following holds true.
    \begin{equation}
    \begin{aligned}
       &  \liminf _{r \downarrow 0} \frac{\msc L^2 \big\{(t, x) \in B_{2r}(\bar t, \bar x) \; \big| \; u(t,x) > \bar v-\delta\big\}}{r^2} > c \cdot \mathfrak h^-(\bar v, \delta)\\
   & \limsup_{r \downarrow 0} \frac{\nu_0(B_{2r}(\bar t, \bar x))}{r} > c \cdot \mathfrak h^-(\bar v, \delta)^2\\
   & \limsup_{r \downarrow 0} \frac{\nu_1(B_{2r}(\bar t, \bar x))}{r} > c \cdot \mathfrak h^-(\bar v, \delta)^3.
        \end{aligned}
\end{equation}
\end{lemma}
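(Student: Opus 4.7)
The plan is to adapt the strategy of \cite[Lemma 4.6]{Mar23}, running in parallel the hypograph representation $\bs \omega_h$ (carrying the given curve $\gamma$) and an epigraph representation $\bs \omega_e$ produced by Theorem \ref{thm:lagraex}. Without loss of generality I treat the case $\bar v = \gamma^v(\bar t-)$, the other two possibilities being analogous up to obvious adjustments. Set $h := \mathfrak h^-(\bar v, \delta)$ and fix, by the very definition of $\mathfrak h^-$, a Borel set $E \subset (\bar v - \delta, \bar v)$ with $\msc L^1(E) \geq h$ and $|f'(v) - f'(\bar v)| \geq 2h$ for every $v \in E$.

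The first step is to extract a positive $\bs \omega$-mass of ``test curves'' anchored near $\gamma$ at time $\bar t$, with values in $E$. Along $\gamma$ and for $\msc L^1$-a.e.\ time $s$ just below $\bar t$, Lemma \ref{lemma:goodcurves} ensures that $(s, \gamma^x(s))$ is a Lebesgue point of $u$ with $u(s, \gamma^x(s)) \geq \bar v - \delta/4$ (thanks to $\gamma^v(s) \to \bar v$ from the left, which is where $\delta_1$ enters, controlling how close to $\bar t$ one must be). Applying \eqref{eq:deflagromega} fibrewise on a space-time rectangle of diameter comparable to $r$ centred near $(\bar t, \bar x)$ and in the $v$-slab $E$, a Fubini argument produces a family of curves in $\Gamma_h \cup \Gamma_e$ of $\bs \omega$-measure of order $r h$ whose $v$-component at the anchor time lies in $E$ and whose spatial support enters $B_{2r}(\bar t, \bar x)$.

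The second step is the trichotomy. For each such test curve $\sigma$, the characteristic equation \eqref{eq:charaeqla} gives $\dot \sigma^x = f'(\sigma^v)$, which as long as $\sigma^v$ stays in $E$ differs from $f'(\bar v)$ by at least $2h$. Running this over a time window of size $r$, exactly one of the following must happen inside $B_{2r}(\bar t, \bar x)$:
\begin{enumerate}
\item $\sigma$ traverses the ball keeping its value inside $E$, in which case the relation $\chi_h = \mathbf 1_{v \leq u}$ combined with \eqref{eq:deflagromega} forces the trajectory to lie inside $\{u(t,x) > \bar v - \delta\}$; integrating over $v \in E$ yields a Lebesgue-measure lower bound $\gtrsim r^2 h$ for this set in $B_{2r}$, i.e.\ alternative (1) with constant $\sim h$;
\item $\sigma^v$ undergoes a $BV$-variation of size of order $h$ in order to escape $E$, contributing at least $h$ to the $(t,x)$-marginal of $|\mu_1^\sigma|$ via \eqref{eq:mu1def};
\item $\sigma$ is killed at an endpoint of $I_\sigma$ inside $B_{2r}(\bar t, \bar x)$, contributing a Dirac of unit mass to $|\mu_0^\sigma|$ via \eqref{eq:mu0def}.
\end{enumerate}
Integrating over the family of test curves and using the good representation identities \eqref{eq:optcond} to rule out cancellations, case (2) contributes $\gtrsim rh \cdot h = rh^2$ to $\nu_1(B_{2r}(\bar t,\bar x))$ divided by $r$ (one $h$ from the test family, one more $h$ from the size of the per-curve jump), i.e.\ alternative (3) with constant $\sim h^3$ after the extra factor $h$ coming from the spatial separation window $rh$ at the anchor; and case (3) contributes $\gtrsim h^2$ to $\nu_0(B_{2r})/r$, i.e.\ alternative (2).

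The main obstacle, and the place where fidelity to \cite{Mar23} really matters, is the quantitative bookkeeping: transferring $\bs \omega$-mass estimates on subsets of $\Gamma$ into honest lower bounds for $\nu_0$ and $\nu_1$ on the metric balls $B_{2r}(\bar t, \bar x)$ without losing the sharp powers $h$, $h^2$, $h^3$. This requires the no-crossing Lemma \ref{lemma:nocrossing} (so that hypograph and epigraph curves do not annihilate in \eqref{eq:optcond}) together with careful control of $\delta_1$ in terms of the modulus of continuity of $\gamma^v$ near $\bar t$, needed to ensure that the linearization $f'(\sigma^v) \approx f'(\bar v) \pm 2h$ really holds on the relevant time window for all sufficiently small $r$ and $\delta < \delta_1$.
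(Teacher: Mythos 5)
You have the right high-level trichotomy — long-lived curves give a lower bound on $\mathscr L^2\{u>\bar v-\delta\}$, curves that escape the $v$-slab contribute total variation to $|\mu_1|$, and curves that die contribute atoms to $|\mu_0|$ — and this is indeed the skeleton of the paper's argument. But several pieces of the execution do not hold up.

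First, you work with a general Borel set $E\subset(\bar v-\delta,\bar v)$ with $\msc L^1(E)\geq h$ on which $|f'(v)-f'(\bar v)|\geq 2h$. The escape-requires-variation step in your case (2) needs $E$ to be a \emph{connected interval}: the paper passes from the sublevel set in the definition of $\mathfrak h^-$ to a connected component $J$ of $\{v : |f'(v)-f'(\bar v)|>h\}$, which has length $|J|\geq h/\Vert f''\Vert_\infty$, and then restricts test curves to the central third $\bar J$. Only then does escaping $J$ force $\mathrm{Tot.Var.}(\gamma^v)\geq |J|/3$. With a general Borel $E$ a curve could leave with arbitrarily small variation, and the $|\mu_1|$ lower bound collapses.

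Second, the power bookkeeping is off. The measure $\tilde{\bs\omega}_h$ of test curves is $\gtrsim r\,|\bar J|\,h\sim r h^2$: one factor of $h$ comes from the width of the $v$-slab $\bar J$, the second from the transversality estimate (each restricted curve spends time $\lesssim \varepsilon/h$ in the thin tube of width $\varepsilon$, while the total occupation time of the hypograph there is $\gtrsim r\varepsilon|\bar J|$). You put the family mass at $\sim rh$ and then try to recover the missing $h$ through an obscure ``spatial separation window''. The sharp constants $h$, $h^2$, $h^3$ in the three alternatives fall out once the family mass $\sim rh^2$ is established: dividing back out the $v$-slab width gives $h$ in case (1); multiplying by per-curve variation $\sim h$ gives $h^3$ in case (2); multiplying by the unit mass of the Dirac at the endpoint gives $h^2$ in case (3).

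Third, the epigraph representation $\bs\omega_e$ and Lemma~\ref{lemma:nocrossing} play no role in this lemma. The proof is entirely internal to $\bs\omega_h$; the epigraph version is the separately-stated Lemma~\ref{lemma:epireg}, whose proof is omitted as symmetric. Similarly \eqref{eq:optcond} is only needed implicitly through the standing assumption that $(\mu_0,\mu_1)$ is a minimal (hence simultaneously induced by good representations) pair — there is no need to invoke it explicitly to ``rule out cancellations between $\bs\omega_h$ and $\bs\omega_e$'', since only $\bs\omega_h$ appears. Bringing in that machinery muddies an argument that is already delicate because of the power counting.
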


\begin{proof}
Without loss of generality, we assume that $\bar v = \bar \gamma^v(\bar t-)$ and that there exists $\delta$ such that $(\bar t-\delta, \bar t) \subset I_\gamma$. We let $\delta > 0$ be such that for every $t \in (\bar t-\delta, \bar t)$ it holds
$$
|f^{\prime}(\bar \gamma^v(t)) -f^{\prime}(\bar v) | < \mathfrak h^-(\bar v, \delta)/2, \qquad \bar \gamma^v(t) > \bar v-\mathfrak h^-(\bar v, \delta)/2
$$
Moreover, since $\exists \lim_{t\to \bar t^-}\bar \gamma_x'(t)=f'(\bar v)$, then for $\bar r$ small, and for every $r < \bar r$, the curve $(t^1_{\bar \gamma}, \bar t)  \ni t \mapsto (t, \bar \gamma^x(t))$ has a unique intersection with $\partial B_r((\bar t, \bar x))$, at a point that we call $t_r$. 

We start by noting that an interval $J \subset (\bar v-\delta, \bar v- \mathfrak h^-(\bar v, \delta)/4)$ of length $\sim  \mathfrak h^-(\bar v, \delta)$ and in which $f^{\prime}$ is distant at least $\mathfrak h^-(\bar v, \delta)$ from $f^{\prime}(\bar v)$:
 \begin{equation}\label{eq:Iilowebound}
 \begin{aligned}
 &|f^{\prime}(\bar v)-f^{\prime}(v)| >  \mathfrak h^-(\bar v, \delta), \qquad \forall v \in J\\
    & |J| \geq \frac{1}{\|f^{\second}\|_{\infty}}  \mathfrak h^-(\bar v, \delta).
     \end{aligned}
 \end{equation}

In fact, pick any point $\tilde v \in (\bar v-\delta, v)$ such that $f^\prime(v) \geq 2h$. Let $J$ be the connected component of the set $\{v \; | \; |f^\prime(v) - f^\prime(\bar v)| > \mathfrak h^-(\bar v, \delta)\}$ to which $\tilde v$ belongs. Then, the length of $J$ must be at least 
$$
|J| \geq \frac{1}{\|f^\second\|_{\infty}} \mathfrak h^-(\bar v, \delta).
$$

Since $\bar \gamma \in \Gamma_h$ and $\bar \gamma^v(t) > \bar v- \mathfrak h^-(\bar v, \delta)/4$ for $t \in (t_1-\bar \delta, t_1)$, it holds, for some $\varepsilon>0$ possibly depending on $r$, that 
\begin{equation}
\begin{aligned}
    & \msc L^2 \Bigg\{ (t, x) \in S^\gamma_{\varepsilon, r} \; \Big| \; u(t, x) > \bar v- \frac{\mathfrak h^-(\bar v, \delta)}{4}\Bigg\} \geq \varepsilon r, \quad \text{where $S^\gamma_{\varepsilon, r}   : = (\mr{id}, \gamma^x)((t_r, \bar t)) + B_{\varepsilon}(0)$}.
    \end{aligned}
\end{equation}
For every $(\gamma, I_{\gamma}) \in \Gamma$ consider the nontrivial interiors $(t_{j}^{\gamma, -},t_{j}^{\gamma, +})_{j=1}^{N_{\gamma}}$ of the connected components of $(\gamma^v)^{-1}(J)$ which intersect
$$
(\mr{id}_t, \gamma)^{-1}\big( S^\gamma_{\varepsilon, r} \times \bar J \big)  \subset I_{\gamma}
$$
where $\bar J$ is the central interval of $J$ of length $|J|/3$. Notice that we have the estimate
\begin{equation}\label{eq:Ngamma}
N_{\gamma} \leq 1+ \frac{3}{|J|}\mr{Tot.Var.}\gamma^v
\end{equation}
For every $j \in \mathbb N$, consider the set 
$$
\Gamma_j : = \{(\gamma, I_{\gamma})\; : \; N_{\gamma} \geq j\}
$$
and consider the measurable restriction  map 
$$
R_j : \Gamma_j \to \Gamma, \quad (\gamma, I_{\gamma} ) \mapsto (\gamma, (t_{j}^{\gamma,-}, t_{j}^{\gamma,+}))
$$
Define the measure 
$$
\tilde \omega_h : = \sum_{j = 1}^{\infty} (R_j)_{\sharp} (\omega_h \llcorner \Gamma_j)
$$
which is finite because of the estimate \eqref{eq:Ngamma} we have for some constant $K> 0$
$$
\|\tilde \omega_h\| \leq \int_{\Gamma} N_\gamma \dif \bs \omega(\gamma)  \leq \int_{\Gamma} \mathbf 1_{\{|\gamma^x(\bar t)- x| \leq K\}}(\gamma)\left(1+\frac{3}{|J|}\right) \mr{Tot.Var.}\gamma^v \dif \bs \omega(\gamma) < \infty
$$
the last inequality being a consequence of \eqref{eq:CKTlr}.
By an elementary transversality argument, for $\tilde \omega_h$-a.e. curve $(\gamma^\prime, I_{\gamma^\prime})$ it holds, since 
$$
|\dot \gamma^{\prime, x}(t) - f^\prime(\bar v)| > \mathfrak h^-(\bar v, \delta) \quad \text{for a.e. $t \in (t_{j}^{\gamma,-}, t_{j}^{\gamma,+})$}
$$
that
\begin{equation}\label{eq:transvers}
\msc L^1 \Big\{ t \in I_{\gamma^\prime} \; \Big| \; \gamma^\prime(t) \in S_{\varepsilon,r} \times \bar J \Big\} \leq  \frac{2\varepsilon}{\mathfrak h^-(\bar v, \delta)}.
\end{equation}
By construction, it holds
\begin{equation}\label{eq:tildeomegain}
\int_{\Gamma} (\mr{id}_t, \gamma)_{\sharp} \msc L^1\llcorner I_{\gamma} \dif \tilde \omega_h \geq \msc L^3 \llcorner \Big\{(t, x, v) \in S^\gamma_{\varepsilon, r} \times \bar J \; \Big| \;  u(t,x) > v      \Big\}.
\end{equation}
The measure of the set in the right hand side of \eqref{eq:tildeomegain} is at least $r\varepsilon |\bar J|$, therefore combining \eqref{eq:transvers}, \eqref{eq:tildeomegain}, we obtain
\begin{equation}
\tilde \omega_h (\Gamma) \geq  \frac{1}{2}r |\bar J| \mathfrak h^-(\bar v, \delta).
\end{equation}
We let $\Gamma = \Gamma_1 \cup \Gamma_2 \cup \Gamma_3$, where 
$$
\begin{aligned}
& \Gamma_1 = \big\{ (\gamma, I_{\gamma}) \; \big| \; |I_{\gamma} | \geq r\big\},\\
& \\
& \Gamma_2 = \big\{ (\gamma, I_{\gamma}) \; \big| \; |I_{\gamma}| < r, \quad \gamma^v(\partial I_{\gamma}) \cap  \partial J\neq 0 \big\},\\
& \\
& \Gamma_3 = \big\{ (\gamma, I_{\gamma}) \; \big| \; |I_{\gamma}| < r, \quad \gamma^v(\partial I_{\gamma})\cap  \partial J = 0 \big\}.
\end{aligned}
$$
For $\tilde \omega_h$-a.e. $(\gamma, I_{\gamma}) \in \Gamma_1$ it holds
$$
\msc L^1 \big\{ t \in I_{\gamma} \; \big| \; (t, \gamma(t)) \in B_{2r}(\bar t, \bar x) \times J\big\} \geq r 
$$
For $\tilde \omega_h$-a.e. $(\gamma, I_{\gamma}) \in \Gamma_2$ it holds
$$
\gamma(I_{\gamma}) \subset B_{2r}(\bar t, \bar x)\times J, \qquad \mr{Tot.Var.} \gamma^v > |J|/3
$$
For $\tilde \omega_h$-a.e. $(\gamma, I_{\gamma}) \in \Gamma_3$ it holds
$$
(t^1_{\gamma}, \gamma(t^1_{\gamma})), \quad (t^2_{\gamma}, \gamma(t^2_{\gamma})) \in  B_{2r}(\bar t, \bar x) \times J
$$
Then it follows that one of these condition holds: 
\begin{equation}
\begin{aligned}
& \tilde \omega_h(\Gamma_1) \geq \frac{r|\bar J| \mathfrak h^-(\bar v, \delta)}{3}, \quad \tilde \omega_h(\Gamma_2) \geq \frac{r|\bar J| \mathfrak h^-(\bar v, \delta)}{3}, \quad \tilde \omega_h(\Gamma_3) \geq \frac{r|\bar J| \mathfrak h^-(\bar v, \delta)}{3}.
\end{aligned}
\end{equation}
If the first condition holds, we deduce from $(e_t)_\sharp \tilde \omega_h \leq (e_t)_\sharp  \omega_h = \chi(t, \cdot, \cdot) \cdot \mathscr L^2$ for every $t> 0$, using Fubini's theorem,
$$
\msc L^2\Big( \big\{    (t, x) \in B_{2r}(\bar t, \bar x) \; \big| \;  u(t, x) > \bar v- \delta \big\} \Big) \geq \frac{1}{3}r^2 \mathfrak h^-(\bar v, \delta).
$$
If the second condition holds,  we deduce, using the first equation in \eqref{eq:inducedpair}, that
$$
|\mu_1|(B_{2r}(\bar t, \bar x) \times J ) \geq \int_{\Gamma_2} \frac{|J|}{3}\dif \tilde \omega_h(\gamma) \geq \frac{r|\bar J| \mathfrak h^-(\bar v, \delta)}{3}\frac{|J|}{3} \geq \frac{1}{27 \Vert f^{\second} \Vert_{\infty}^2} r \mathfrak h^-(\bar v, \delta)^3.
$$
If the third condition holds, in a similar way, we deduce, using the second equation in \eqref{eq:inducedpair}, that 
$$
|\mu_0|(B_{2r}(\bar t, \bar x) \times J ) > \frac{1}{9\Vert f^{\second} \Vert} r \mathfrak h^-(\bar v, \delta)^2.
$$
This proves the result.
\end{proof}

The symmetric statement holds for the epigraph: the proof is identical, therefore is omitted. 
\begin{lemma}\label{lemma:epireg}
    Let $(\gamma, I_{\gamma}) \in \Gamma_e$,  let $$\bar v = \min\, [\gamma^v(\bar t\pm)],
$$
    where 
    $$
    [\gamma^v(\bar t\pm)]=
    \begin{cases}
      \{\gamma^v(\bar t+),\, \gamma^v(\bar t-)\}
        \  &\text{if}\quad  \bar t \in I_{\gamma},
        \\
        \{\gamma^v(\bar t+)\}
        \   &\text{if}\quad  \bar t = \inf I_\gamma,
        \\
        \{\gamma^v(\bar t-)\}
        \   &\text{if}\quad  \bar t = \sup I_\gamma.
    \end{cases}
    $$
    and set $\bar x = \gamma^x(\bar t)$. Let $$\bar v =\liminf_{t \to \bar t, \; t \in I_\gamma} \gamma^v(t).$$ Then there exists an absolute constant $c$ depending only on $\Vert f^{\second}\Vert_{\infty}$ such that  for every $\delta \in (0, 1)$ at least one of the following holds true.
    \begin{equation}
    \begin{aligned}
       &  \liminf _{r \downarrow 0} \frac{\msc L^2 \big\{(t, x) \in B_R(\bar t, \bar x) \; \big| \; u(t,x) < \bar v-\delta\big\}}{r^2} > c \cdot \mathfrak h^+(\bar v, \delta)\\
   & \limsup_{r \downarrow 0} \frac{\nu_0(B_R(\bar t, \bar x))}{r} > c \cdot \mathfrak h^+(\bar v, \delta)^2\\
   & \limsup_{r \downarrow 0} \frac{\nu_1(B_R(\bar t, \bar x))}{r} > c \cdot \mathfrak h^+(\bar v, \delta)^3.
        \end{aligned}
\end{equation}
\end{lemma}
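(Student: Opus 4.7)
The proof is a verbatim adaptation of that of Lemma \ref{lemma:hypreg} under the hypograph--epigraph duality: every occurrence of the hypograph representation $\bs\omega_h$ is replaced by the epigraph representation $\bs\omega_e$ of Definition \ref{defi:lagraepi}, $\Gamma_h$ is replaced by $\Gamma_e$, and $\mathfrak h^-$ is replaced by $\mathfrak h^+$ throughout. I describe the necessary substitutions.

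\textbf{Setup.} Assume without loss of generality that $\bar v = \bar\gamma^v(\bar t -)$ and that $(\bar t - \delta_1, \bar t) \subset I_\gamma$. Pick $\delta < \delta_1$ so that, for every $t \in (\bar t - \delta, \bar t)$,
$$
|f'(\bar\gamma^v(t)) - f'(\bar v)| < \mathfrak h^+(\bar v, \delta)/2, \qquad \bar\gamma^v(t) < \bar v + \mathfrak h^+(\bar v, \delta)/2.
$$
By the definition of $\mathfrak h^+$, choose an interval $J \subset (\bar v + \mathfrak h^+(\bar v, \delta)/4,\,\bar v + \delta)$ of length at least $\mathfrak h^+(\bar v, \delta)/\|f''\|_\infty$ on which $|f'(v) - f'(\bar v)| > \mathfrak h^+(\bar v, \delta)$, and let $\bar J$ be its middle third. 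Since $\bar\gamma \in \Gamma_e$, Lemma \ref{lemma:goodcurves} gives $u(t, \bar\gamma^x(t)) < \bar\gamma^v(t) < \bar v + \mathfrak h^+(\bar v, \delta)/4$ for a.e.\ $t$ near $\bar t$, so in the parabolic strip $S^\gamma_{\varepsilon, r}$ around the graph of $\bar\gamma^x$,
$$
\msc L^2\big\{(t, x) \in S^\gamma_{\varepsilon, r}\ \big|\ u(t, x) < \bar v + \mathfrak h^+(\bar v, \delta)/4\big\} \geq \varepsilon r,
$$
the counterpart of the initial inequality of the previous proof (the first alternative in the statement should accordingly read $u(t,x) < \bar v + \delta$).

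\textbf{Combinatorial decomposition.} Form the restricted measure $\tilde\omega_e = \sum_j (R_j)_\sharp (\bs\omega_e \llcorner \Gamma_j)$ by summing over the nontrivial connected components of $(\gamma^v)^{-1}(J)$. The defining volume identity of a Lagrangian representation of the epigraph, namely $(e_t)_\sharp \bs\omega_e = (1 - \chi(t,\cdot,\cdot))\cdot \msc L^2$, now plays the role of $(e_t)_\sharp \bs\omega_h = \chi\cdot \msc L^2$. The bound $N_\gamma \leq 1 + 3\,\mathrm{Tot.Var.}\gamma^v/|J|$, the finiteness of $\tilde\omega_e$ via \eqref{eq:CKTlr}, the transversality inequality, and the lower bound $\tilde\omega_e(\Gamma) \geq r|\bar J|\mathfrak h^+(\bar v, \delta)/2$ all depend only on the velocity gap $|\dot\gamma^{\prime, x}(t) - f'(\bar v)| > \mathfrak h^+(\bar v, \delta)$ and on the preceding volume inequality, and are therefore preserved verbatim. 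Splitting the support of $\tilde\omega_e$ into $\Gamma_1 \cup \Gamma_2 \cup \Gamma_3$ (long intervals; short intervals with a jump of $\gamma^v$ crossing $\partial J$; short intervals without such a jump) and exhausting the three alternatives produces the three displayed bounds.

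\textbf{Sign bookkeeping.} The only detail that requires a comment is that, by Definition \ref{defi:siminduc}, $\bs\omega_e$ induces the pair $(-\mu_0, -\mu_1)$ rather than $(\mu_0, \mu_1)$, so that \eqref{eq:inducedpair} applied to $\bs\omega_e$ reads $-\mu_1 = \int_\Gamma \mu_1^\gamma\, d\bs\omega_e$ and similarly for $\mu_0$. Since the conclusions of the lemma involve only the nonnegative projections $\nu_0 = (p_{t,x})_\sharp|\mu_0|$ and $\nu_1 = (p_{t,x})_\sharp|\mu_1|$, the signs are absorbed in the total variations, and the estimates $|\mu_0|(B_{2r}\times J) \gtrsim r\mathfrak h^+(\bar v, \delta)^2$ and $|\mu_1|(B_{2r}\times J) \gtrsim r\mathfrak h^+(\bar v, \delta)^3$ follow from the third and second cases of the trichotomy exactly as in the hypograph argument. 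No genuinely new obstacle arises; the only subtlety is keeping track of the inversion of the volume identity $\chi \leftrightarrow 1-\chi$ and of the sign convention in the induced pair, both of which are automatic.
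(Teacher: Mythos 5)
Your proof is correct and is precisely the verbatim hypograph-to-epigraph translation that the paper has in mind (the paper omits this proof, stating it is identical to that of Lemma~\ref{lemma:hypreg}); the substitutions $\bs\omega_h\to\bs\omega_e$, $\Gamma_h\to\Gamma_e$, $\chi\to 1-\chi$, $\mathfrak h^-\to\mathfrak h^+$ carry through without obstruction, and the sign bookkeeping for the simultaneously induced pair is handled as you say since only $|\mu_0|,|\mu_1|$ appear in the conclusion. You also correctly flag a typo in the paper's statement: consistently with $\mathfrak h^+(\bar v,\delta)$ being built from $(\bar v,\bar v+\delta)$, the first alternative should read $u(t,x)<\bar v+\delta$, not $u(t,x)<\bar v-\delta$.
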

The following proposition concludes the proof of Step 4.
\begin{prop}\label{prop:notvmo}
For $\nu^+_{1, J}$-a.e. $(t, x) \in (0, T) \times \mathbb R$, it holds
\begin{equation}
    \limsup_{r \downarrow 0} \frac{\nu(B_r(t, x))}{r} > 0.
\end{equation}
    In particular, the measure $\nu^+_{1,J}$ is concentrated on the 1-rectifiable set $\mathbf J$ of Theorem \ref{thm:dimdthm}. 
\end{prop}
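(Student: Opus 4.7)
My plan is to apply Lemmas \ref{lemma:hypreg} and \ref{lemma:epireg} at a $\pi_J^+$-typical tuple $(\gamma, t, x, v, \gamma', t, x, v)$ and invoke the trichotomy they provide. Alternatives (2) and (3) of either lemma yield $\limsup_{r\downarrow 0} r^{-1} \nu(B_r(t,x)) > 0$ directly, so that $(t,x)\in\mathbf{J}$. The delicate alternative (1), if it arises in both applications, will be shown to force $u$ to oscillate by a uniform positive amount in $B_{2r}(t,x)$ at arbitrarily small scales, excluding vanishing mean oscillation at $(t,x)$; Theorem \ref{thm:dimdthm}(1) then again places $(t,x)$ in $\mathbf{J}$.

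First I would observe that by construction of $\pi_J^+$, at $\pi_J^+$-a.e. point at least one of $\gamma, \gamma'$ has a genuine jump at $t$ whose $v$-jump interval is not contained in any single component $A_l$ of $(0,1)\setminus I$; without loss of generality this is the hypograph curve $\gamma$. The intersection condition in $\mathcal{G}$ then gives, for $\bar v_h:=\max[\gamma^v(t\pm)]$ and $\bar v_e:=\min[\gamma^{\prime,v}(t\pm)]$, the inequality $\bar v_h\geq v\geq \bar v_e$. I apply Lemma \ref{lemma:hypreg} to $\gamma$ and Lemma \ref{lemma:epireg} to $\gamma'$ at $(t,x)$ with a small $\delta>0$, using that the weakly-genuinely-nonlinear hypothesis yields $\mathfrak h^\pm(\cdot,\delta)>0$. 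If alternative (2) or (3) occurs in either application the stated density bound immediately delivers the claim, so the remaining case is alternative (1) in both: we obtain positive Lebesgue-density subsets of $B_{2r}(t,x)$ on which $u>\bar v_h-\delta$ and $u<\bar v_e-\delta$ respectively. In the strict case $\bar v_h>\bar v_e$, choosing $\delta<(\bar v_h-\bar v_e)/4$ makes these subsets disjoint and separated by a gap of size at least $\bar v_h-\bar v_e$, so a standard computation shows that the $L^1$-mean oscillation of $u$ over $B_{2r}(t,x)$ is bounded below by a positive constant independent of $r$, contradicting VMO at $(t,x)$. The degenerate case $\bar v_h=\bar v_e$ I would dispose of by running the identical argument on the right traces $\gamma^v(t+), \gamma^{\prime,v}(t+)$: the genuine jump of $\gamma$ built into the definition of $\pi_J^+$ forces $\gamma^v(t+)<\bar v_h=\bar v_e\leq \gamma^{\prime,v}(t+)$, recovering the strict gap needed to apply the oscillation argument on the right side of the jump.

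The hardest part is closing alternative (1) of the trichotomy: by itself it gives only the one-sided density bound on $\{u>\bar v_h-\delta\}$, which is consistent even with Lebesgue continuity of $u$ and so cannot be converted directly into a lower bound on $\nu$. Upgrading this to a genuine jump of $u$ at $(t,x)$ hinges on pairing the hypograph curve with the epigraph curve supplied by the transport plan $\pi^+$, exploiting both the trace linkage enforced by the intersection constraint defining $\mathcal{G}$ and the ``non-collapsible jump'' structure that is the very defining feature of $\pi_J^+$ (the $v$-interval of the jump meeting $I$, so that the strict separation $\bar v_h>\bar v_e$ can be arranged on at least one of the two trace sides).
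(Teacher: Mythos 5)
Your proposal takes essentially the same route as the paper: apply Lemma~\ref{lemma:hypreg} to the hypograph curve and Lemma~\ref{lemma:epireg} to the epigraph curve furnished by the transport plan $\pi^+_J$, observe that alternatives (2) and (3) of either lemma directly produce a positive $\mathscr H^1$-density of $\nu$, and argue that alternative (1) holding for both lemmas is incompatible with vanishing mean oscillation of $u$ at $(t,x)$, whence by Theorem~\ref{thm:dimdthm}(1) the point lies in $\mathbf J$.

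One remark on the ``degenerate case'' $\bar v_h = \bar v_e$ you introduce and then try to patch via the right-hand traces: this case in fact does not arise, and the paper exploits this directly. By the definition of $\pi^+_J$ and the fact that $\mu_1^{\gamma,+}$ has no atoms (already used in Step~3.2), for $\pi^+_J$-a.e.\ tuple on, say, the branch $\mathcal G^+_{h,J}\times\mathcal G^-_e$ one has the \emph{strict} chain $\gamma^v(t-) > v = v' \geq \gamma'^{v}(t-)$, so $\bar v_h = \gamma^v(t-) > \bar v_e = \gamma'^{v}(t-)$ always. (Your ``$\bar v_h \geq v \geq \bar v_e$'' is weaker than what the structure actually gives, which is why you saw a phantom gap-collapse.) The paper then takes $\delta = \tfrac13|\gamma^v(t-)-v|$, which is automatically positive and small enough for the two high-density sets of alternative (1) to live in disjoint value ranges of $u$. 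Your proposed fix of ``running the identical argument on the right traces'' would not go through as stated, since Lemmas~\ref{lemma:hypreg} and~\ref{lemma:epireg} are formulated for the specific value $\bar v = \max[\gamma^v(\bar t\pm)]$ (resp.\ $\min$), not for an arbitrarily chosen one-sided trace; but since the strict gap is automatic, this branch of your argument is vacuous anyway and the overall proof is correct.
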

\begin{proof}
    For $\nu^+_{1,J}$ a.e. $(t, x)$ one of the following holds:
    \begin{enumerate}
        \item there exists $(\gamma, t, x, v) \in \mc G^+_{h, J}$ and $(\gamma^{\prime}, t^{\prime}, x^{\prime}, v^{\prime}) \in \mc G^-_e$ such that 
        $$
        (t, x) = (t^{\prime}, x^{\prime}) \quad \text{and} \quad \gamma^v(t-)> v = v^{\prime} \geq \gamma^{\prime,v}(t-);
        $$
         \item there exists $(\gamma, t, x, v) \in \mc G^-_{h}$ and $(\gamma^{\prime}, t^{\prime}, x^{\prime}, v^{\prime}) \in \mc G^-_{e,J}$ such that 
        $$
        (t, x) = (t^{\prime}, x^{\prime}) \quad \text{and} \quad \gamma^{\prime, v}(t-)< v = v^{\prime} \leq \gamma^{v}(t-).
        $$
    \end{enumerate}
Since the proof is symmetrical, assume the first condition holds. We apply Lemma \ref{lemma:hypreg} to the curve $\gamma$ and Lemma \ref{lemma:epireg} to the curve $\gamma^{\prime}$ with 
$$
\delta = \frac{1}{3}\{|\gamma^v(t-)-v| \}.
$$
Then either the second or the third condition holds in at least one of the two Lemma \ref{lemma:hypreg} , \ref{lemma:epireg}, or the first condition holds in both Lemmas.  But in this case, $(t, x)$ cannot be a point of vanishing mean oscillation. Therefore by Theorem \ref{thm:dimdthm}, it must holds $(t, x) \in \mathbf J$.
\end{proof}

\vspace{0.5cm}
\textbf{5.} From the previous steps, we conclude that $\nu^+_1$ is concentrated on a 1-rectifiable set. In fact, one has 
\begin{equation}
    \nu^+_1 = (p_{t,x})_{\sharp} \int_{\Gamma_h} \mu_1^{\gamma,+} \dif \bs \omega_h(\gamma) \leq  (P_{t,x})_{\sharp} \pi^+ = \nu^+_{1, J} + \sum_{l \in \mathbb N} \nu^+_{1,l}\,.
\end{equation}
From Step 3 and Step 4 we deduce that $\nu^+_1 $ is concentrated on a 1-rectifiable set.
The same argument holds for $\nu_1^-$, therefore the proof of Theorem \ref{thm:1drect} is completed. 

\vspace{0.5cm}
We conclude the section with a result about the structure of the source term $\mu_0$ outside the jump set $\mathbf J$ that, beside having an interest on its own, it will be useful for later when studying the isentropic system with $\gamma = 3$.
\begin{prop}\label{prop:sourcestruct}
    In the above setting, there exists a measurable function $e(t,x)$ such that one additionally has 
    \begin{equation}\label{eq:mu0form}
        \mu_0 \llcorner \mathbf J^c = e(t,x) (p_{t,x})_\sharp |\mu_0| \otimes \delta_{\bar u(t,x)}, \qquad e(t,x) \in \{1, -1\}.
    \end{equation}
\end{prop}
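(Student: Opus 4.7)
The plan is to work with a minimal pair $(\mu_0,\mu_1)$ together with good Lagrangian representations $\bs\omega_h$ of the hypograph and $\bs\omega_e$ of the epigraph that simultaneously induce it, as provided by Proposition \ref{prop:goodpairs}. The no-cancellation condition \eqref{eq:optcond} identifies the Hahn decomposition of $\mu_0$ geometrically: $\mu_0^+$ is the push-forward of $\bs\omega_h$ by the initial endpoint map $\gamma\mapsto(t^1_\gamma,\gamma(t^1_\gamma))$, and equivalently of $\bs\omega_e$ by the final endpoint map $\gamma\mapsto(t^2_\gamma,\gamma(t^2_\gamma))$, with the roles of hypograph and epigraph interchanged for $\mu_0^-$.

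The heart of the argument is to show that $\mu_0\llcorner(\mathbf J^c\!\times\!\R)$ is concentrated on the graph $\{(t,x,\bar u(t,x))\}$, where $\bar u$ is the VMO value of $u$ on $\mathbf J^c$ furnished by Theorem \ref{thm:dimdthm}(1). By the disintegration in the previous paragraph, for $\mu_0^+$-a.e. point $(\bar t,\bar x,\bar v)$ with $(\bar t,\bar x)\notin\mathbf J$ one finds both a curve $\gamma\in\Gamma_h$ starting at $(\bar t,\bar x)$ with $\gamma^v(\bar t+)=\bar v$ and a curve $\sigma\in\Gamma_e$ ending at $(\bar t,\bar x)$ with $\sigma^v(\bar t-)=\bar v$. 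Applying Lemma \ref{lemma:hypreg} to $\gamma$, and noting that the density of $\nu=\nu_0+\nu_1$ vanishes at $(\bar t,\bar x)$, the $\nu_0$ and $\nu_1$ alternatives of the trichotomy are excluded, leaving
$$
\liminf_{r\downarrow0}r^{-2}\msc L^2\!\big(\{u>\bar v-\delta\}\cap B_{2r}(\bar t,\bar x)\big)>c\,\mathfrak h^-(\bar v,\delta)>0\qquad\forall\,\delta>0.
$$
If $\bar u(\bar t,\bar x)<\bar v-\delta$, then $\{u>\bar v-\delta\}\subset\{|u-\bar u|>\bar v-\delta-\bar u(\bar t,\bar x)\}$, whose density at $(\bar t,\bar x)$ vanishes by VMO, a contradiction. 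Hence $\bar u(\bar t,\bar x)\geq\bar v$; the symmetric application of Lemma \ref{lemma:epireg} to $\sigma$ yields $\bar u(\bar t,\bar x)\leq\bar v$, and therefore $\bar v=\bar u(\bar t,\bar x)$. The same reasoning at $\mu_0^-$-generic points completes the graph concentration.

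Granted this, \eqref{eq:mu0form} is then immediate. Both $\mu_0^+\llcorner\mathbf J^c$ and $\mu_0^-\llcorner\mathbf J^c$ live on the graph $G=\{(t,x,\bar u(t,x))\}$, and being mutually singular they are concentrated on disjoint Borel subsets $B^\pm\subset G$. Since $p_{t,x}$ is injective on $G$, the projections $A^\pm:=p_{t,x}(B^\pm)$ are disjoint Borel subsets of $\mathbf J^c$, and setting $e(t,x):=\mathbf 1_{A^+}(t,x)-\mathbf 1_{A^-}(t,x)\in\{+1,-1\}$ (on the support of $(p_{t,x})_\sharp|\mu_0|\llcorner\mathbf J^c$), one has $\mu_0\llcorner(\mathbf J^c\!\times\!\R)=(\mr{id},\bar u)_\sharp\big(e\cdot(p_{t,x})_\sharp|\mu_0|\big)$, which is \eqref{eq:mu0form}.

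The main obstacle is the graph-concentration step: one must carefully carry out the simultaneous disintegration of $\bs\omega_h$ and $\bs\omega_e$ so that for $\mu_0$-a.e. point in $\mathbf J^c\!\times\!\R$ both a hypograph-initial and an epigraph-final curve with matching value $\bar v$ are available on a common full-measure set, and then combine the trichotomies of Lemmas \ref{lemma:hypreg}–\ref{lemma:epireg} with VMO uniformly in $\delta\downarrow 0$.
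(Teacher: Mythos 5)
Your proof is correct and follows the same overall strategy as the paper's: work with a minimal pair simultaneously induced by good Lagrangian representations $\bs\omega_h,\bs\omega_e$ (Proposition~\ref{prop:goodpairs}), use the no-cancellation condition to identify $\mu_0^{\pm}$ with push-forwards by the endpoint maps, and then invoke Lemmas~\ref{lemma:hypreg} and~\ref{lemma:epireg} on $\mathbf J^c$, where the $\nu_0$- and $\nu_1$-alternatives are ruled out because the $\msc H^1$-density of $\nu$ vanishes. The one place where your account goes further is the graph-concentration step: you prove directly that the endpoint value $\bar v$ must equal the VMO value $\bar u(\bar t,\bar x)$, by showing that the first alternative of Lemma~\ref{lemma:hypreg} (positive lower density of $\{u>\bar v-\delta\}$) is incompatible with $\bar u<\bar v-\delta$, and symmetrically from Lemma~\ref{lemma:epireg}. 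The paper's proof, as written, derives a contradiction from the existence of two distinct values $v_-<v_+$ in the support of $\mu_0^{(t,x)}$ via the argument of Proposition~\ref{prop:notvmo}; read literally this only excludes a non-Dirac disintegration and leaves implicit the possibility of a single atom at a value $\neq\bar u(t,x)$. Your version handles both possibilities in one pass using the same lemmas, so it is arguably the more complete formulation of the same argument. (You also implicitly correct the typo in Lemma~\ref{lemma:epireg}, where the first alternative should read $u<\bar v+\delta$, which is what the symmetric argument requires.)
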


\begin{proof}
    Since $\mu_1, \mu_0$ is a minimal pair, it is induced by a \emph{good} Lagrangian representation $\omega_h$ (Definition \ref{defi:goodlr}) in particular,
    $$
    \mu_0 =  \int_{\Gamma} \mu_0^\gamma \dif \omega_h(\gamma)
    $$
    Since $(\mu_0, \mu_1)$ and $(-\mu_0, -\mu_1)$ are simultaneously induced by $\omega_h, \omega_e$, there holds
    $$
    -\mu_0 = \int_{\Gamma} \mu_0^\gamma \dif \omega_e(\gamma)
    $$
    This means that for $|\mu_0|$ almost every point $(t,x,v)$ there is a curve $\gamma_h \in \Gamma_h$ and a curve $\gamma_e$ in $\Gamma_e$ such that $\gamma_h(t) = (x, v), \gamma_e(t) = (x,v)$, and either 
    $$
    t = t^1_{\gamma_h}, \quad t = t^2_{\gamma_e}
    $$
    or 
    $$
    t = t^2_{\gamma_h}, \quad t = t^1_{\gamma_e}.
    $$
Assume now that $\mu_0$ is not of the form \eqref{eq:mu0form}. Then there is a point $(t,x) \in \mathbf J^c$, two values $v_- < v_+$ and two curves $\gamma_h \in \Gamma_h, \gamma_e \in \Gamma_e$ such that $\gamma_h^x(t) =\gamma_e^x(t) = x$ and
$$
\limsup_{s \to t, \; s \in I_{\gamma_h}} \gamma^v_h(s) = v_+, \qquad \limsup_{s \to t, \; s \in I_{\gamma_e}} = v_-
$$
Since $(t,x) \in \mathbf J^c$, the first option of Lemma \ref{lemma:hypreg} and of Lemma \ref{lemma:epireg} must hold. But with the same argument of Proposition \ref{prop:notvmo}, this is a contradiction because $(t,x)$ is of vanishing mean oscillation.    
\end{proof}

\section{Regularity of Burgers' Equation}\label{sec:RBE}
In this section we provide a first application of the Lagrangian representation to obtain a regularity result for quasi-entropy solutions Burger's equation.
We first recall the Definition of the Besov spaces $B^{\alpha, p}_{\infty, \mr{loc}}(\mathbb R)$.
\begin{defi}
   Let $\alpha \in (0, 1)$, $p \in [1, +\infty)$. A function $u: \mathbb R \to \mathbb R$ belongs to $B^{\alpha, p}_{\infty, \mr{loc}}(\mathbb R)$ if 
    \begin{equation}
       \Vert u\Vert^p_{B^{\alpha, p}_{\infty}(K)}:= \sup_{h > 0} \int_{K} \left| \frac{u(x+h)-u(x)}{h^{\alpha}}\right|^p \dif x  < +\infty, \quad \forall \; K \subset \mathbb R\quad \text{compact}.
    \end{equation}
\end{defi}
Then we have the following regularity result in terms of Besov spaces.
\begin{thm}\label{thm:besov}
     Let $u$ be a quasi-entropy solution to Burgers equation such that in addition $\mu_1$ given by Proposition \ref{prop:feskin} is a signed measure.
Then for every $T> 0$, $K \subset \mathbb R$ compact and $\delta > 0$ there exists a constant $C \equiv C(K, T, \mu_0)$ such that 
\begin{equation}
\int_{\delta}^T \Vert u(t)\Vert_{B^{1/2, 1}_{\infty}(K)} \dif t < \frac{C}{\min\{\delta,1\}}.
\end{equation}
 \end{thm}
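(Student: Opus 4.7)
The plan is to use the good Lagrangian representation $\bs\omega_h$ from Theorem \ref{thm:lagraex} to derive an approximate Oleinik-type estimate for $u(t,\cdot)$, with the error controlled linearly by the (finite) total variations of $\mu_0$ and $\mu_1$, and then to upgrade it to the Besov $B^{1/2,1}$-bound by interpolation with the trivial $L^\infty$-bound via the elementary inequality $\min(a,b)\leq\sqrt{ab}$. Integration in time yields the stated bound. Since the hypothesis is only that $\mu_1$ is a signed (finite) measure, without any sign assumption, each $\gamma^v$ is genuinely of bounded variation on $I_\gamma$, but $\int_\Gamma\mr{TV}(\gamma^v;[0,T])\,d\bs\omega_h<\infty$ via the good-LR identity $|\mu_1|=\int|\mu_1^\gamma|\,d\bs\omega_h$ and the mass bound \eqref{eq:CKTlr}.

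\textbf{Step 1 (pairwise curve geometry).} Because $f'(v)=v$, each $\gamma\in\Gamma_h$ satisfies $\dot\gamma^x=\gamma^v$, so for $s<t$ in $I_\gamma$:
\[
\gamma^x(s)=\gamma^x(t)-(t-s)\gamma^v(t)-\int_s^t[\gamma^v(\tau)-\gamma^v(t)]\,d\tau,
\]
the last term being of absolute value $\leq(t-s)\,\mr{TV}(\gamma^v;[s,t])$. Hence if two curves $\gamma_1,\gamma_2\in\Gamma_h$ are defined on $[0,t]$ and satisfy $\gamma_1^x(s)\leq\gamma_2^x(s)$ for all $s\in[0,t]$, evaluating the ordering at $s=0$ gives
\[
\gamma_2^v(t)-\gamma_1^v(t)\leq\frac{\gamma_2^x(t)-\gamma_1^x(t)}{t}+\mr{TV}(\gamma_1^v;[0,t])+\mr{TV}(\gamma_2^v;[0,t]).
\]
Pairs violating the $[0,t]$-ordering (curves crossing in the interior, or born/killed via $\mu_0$) are controlled via a pairing argument coupling $\bs\omega_h$ with a Lagrangian representation of the epigraph (Section \ref{subsec:simlagra}), in the spirit of the transport-plan construction of Section \ref{sec:structurekm}.

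\textbf{Step 2 (integrated Oleinik, Besov bound, time integration).} Using the a.e.\ identification of $u(t,x)$ with a suitable supremum of $\gamma^v(t)$ over curves through $(t,x)$ (Lemma \ref{lemma:goodcurves}), integration of Step 1 against a pairing plan produces
\[
\int_K(u(t,x+h)-u(t,x))_{\!+}\,dx\leq\frac{|K|\,h}{t}+h\cdot\Psi(t),
\]
the $h$-proportionality of the error being the crucial sharpening, with $\Psi(t)$ bounded linearly in $|\mu_0|+|\mu_1|$ on $[0,t]\times K'\times[0,1]$ (for a slight enlargement $K'$ of $K$). Fubini bounds the negative part by the positive part plus $Ch\|u\|_\infty$, so $\int_K|u(t,x+h)-u(t,x)|\,dx\leq Ch\bigl(1/t+\Psi(t)+1\bigr)$. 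Combined with the trivial bound $\int_K|u(t,x+h)-u(t,x)|\,dx\leq 2\|u\|_\infty|K|$ and $\min(a,b)\leq\sqrt{ab}$:
\[
\|u(t,\cdot)\|_{B^{1/2,1}_\infty(K)}\lesssim \frac{1}{\sqrt{t}}+\sqrt{\Psi(t)}+1.
\]
Integrating over $[\delta,T]$ yields the claimed estimate, the $1/\min\{\delta,1\}$-factor arising from the blunt time-integration of the $1/t$-contribution near $t=0$.

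\textbf{Main obstacle.} The hardest part is proving, without a sign assumption on $\mu_1$, that the error term in Step 2 is \emph{$h$-proportional} (rather than a $h$-independent constant): a constant-in-$h$ error would prevent the $L^\infty$-interpolation from closing to $B^{1/2,1}$. The curves $\gamma^v$ are BV but not monotone, so pair-tracking is delicate: pairs can cross in the interior of $(0,t)$ or undergo multiple jumps of opposite signs. The good-LR property (Definition \ref{defi:goodlr}) prevents spurious cancellations in the $|\mu_1|$-mass, and the transport-plan coupling of $\bs\omega_h$ with the epigraph LR allows one to match crossings with the $\mu_1$-mass localized on a strip of width $\sim h$ around the pair of points, yielding the desired $h$-scaling; the $\mu_0$-mass accounts for curves not extending back to time $0$.
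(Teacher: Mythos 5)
Your proposal misreads the key hypothesis. In this paper (as the paper's own proof makes explicit in its first sentence, and as Lemma~\ref{lemma:signedinv} confirms in the application to $\gamma=3$), ``$\mu_1$ is a signed measure'' means that $\mu_1$ has a \emph{definite} sign, so that ``up to a symmetry in $v$'' one may take $\mu_1\geq 0$. You instead read the hypothesis as ``no sign assumption on $\mu_1$,'' and you build the whole argument around coping with non-monotone $\gamma^v$. This is backwards: the sign hypothesis is precisely what makes the paper's argument work, because \eqref{eq:mu1def} then forces $t\mapsto\gamma^v_h(t)$ to be monotone decreasing for $\bs\omega_h$-a.e.\ curve (and $\gamma^v_e$ increasing for $\bs\omega_e$-a.e.\ curve), and this monotonicity is what gives the clean comparison $\gamma_h^x(t-\Delta t)<y-\Delta t\,a^{\Delta t}(t,y)+2\varepsilon$ with no TV error. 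Without the sign the Oleinik-type comparison simply breaks.

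The second and more structural gap is that you aim for an $h$-proportional bound $\int_K|u(t,\cdot+h)-u(t,\cdot)|\lesssim h(1/t+\Psi(t)+1)$, which if true would imply $u(t,\cdot)\in BV$ locally, far stronger than the stated $B^{1/2,1}_\infty$ conclusion. This is precisely the estimate one should \emph{not} expect here: the presence of the source $\mu_0$ means curves are created inside $(0,t)$ and carry no memory of the ordering at time $0$, so tracking back to $s=0$ (as in your Step~1) cannot control the error in an $h$-proportional way. The paper avoids this by introducing a free parameter $\Delta t$: it compares $u(t,\cdot)$ with auxiliary functions $a^{\Delta t},b^{\Delta t}$ built from curves that have survived on $[t-\Delta t,t]$, obtaining a genuine Oleinik bound $a^{\Delta t}(t,y)-b^{\Delta t}(t,x)\leq(y-x)/\Delta t$ on the one hand and an $L^1$-replacement error $\lesssim\Delta t\,|\mu_0|$ on the other; optimizing $\Delta t=h^{1/2}$ gives exactly the $1/2$-Besov regularity, and the $1/\min\{\delta,1\}$ factor arises from the trivial bound when $h^{1/2}\geq\delta$. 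Your ``main obstacle'' paragraph acknowledges the $h$-proportionality is the crux, but the sketch (matching crossings with $\mu_1$-mass in a width-$h$ strip) is not an argument and does not address the $\mu_0$-created curves at all. You should fix the sign reading, drop the $BV$-strength ambition, and introduce the $\Delta t$-truncated comparison functions.
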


\begin{proof}
Up to a symmetry in the $v$, we can assume that $\mu_1$ is a positive measure.

  Moreover, up to choosing a different pair $(\hat \mu_0, \hat \mu_1)$ smaller than $(\mu_0, \mu_1)$ for $\preceq$, we can assume that $\bs \omega_h$ is a Lagrangian representation of the hypograph of $u$ (Definition~\ref{def:lagrangianrep}) that induces $(\mu_0, \mu_1)$ as in Definition~\ref{defi:goodlr}. Thanks to Proposition~\ref{prop:goodpairs}, we can also assume that $(\mu_0, \mu_1)$ is simultaneously induced (Definition \ref{defi:siminduc}) by $\bs \omega_h$ and $\bs \omega_e$, where $\bs \omega_e$ is a Lagrangian representation of the epigraph of $u$ (Definition~\ref{defi:lagraepi}). This second step is not really necessary but makes the proof easier. A key point is that these operation preserve the sign of $\mu_1$, by definition of the relation $\preceq$. Finally, we denote by $\Gamma_h$, $\Gamma_e$ the set of curves selected by Lemma~\ref{lemma:goodcurves}.
  
\vspace{0.5cm}
\textbf{1.} Fix $\Delta t > 0$ and for $t \geq \Delta t$ consider the set of curves
$$
\Gamma_h^{t, \Delta t} := \Big\{ \gamma_h \in \Gamma_h \quad \big| \quad t_{\gamma_h}^- \leq t-\Delta t, \quad t^+_\gamma \geq t \Big\}
$$
$$
\Gamma_e^{t, \Delta t} := \Big\{ \gamma_e \in \Gamma_e \quad \big| \quad t_{\gamma_e}^- \leq t-\Delta t, \quad t^+_\gamma \geq t \Big\}
$$
Define the measures 
\begin{equation}
    \begin{aligned}
        \chi^t_{a^{\Delta t}} := {e_t}_{\sharp} \;  (\bs \omega_h \llcorner \Gamma_h^{t, \Delta t}) \leq \chi_h(t, \cdot, \cdot )  \msc L^2 \llcorner ( \mathbb R \times (0, 1)) \quad \text{in $\msc M(\mathbb R\times (0, 1))$},& \\
        & \\
         \chi^t_{b^{\Delta t}} := {e_t}_{\sharp} \;  (\bs \omega_e \llcorner \Gamma_e^{t, \Delta t}) \leq \chi_e(t, \cdot, \cdot )  \msc L^2 \llcorner ( \mathbb R \times (0, 1)) \quad \text{in $\msc M(\mathbb R\times (0, 1))$}.
    \end{aligned}
\end{equation}
Finally, we define the functions
\begin{equation}
    \begin{aligned}
       &  a^{\Delta t}(t, x) : = \mr{sup} \; \Big\{ v \in (0, 1) \; \big| \;  (v, x) \in \mr{supp} \; \chi^t_{a^{\Delta t}} \Big\}, \qquad \forall \; x \in \mathbb R\\
       & \\
       &  b^{\Delta t}(t, x) : = \mr{inf} \; \Big\{ v\in (0, 1) \; \big| \;  (v, x) \in \mr{supp} \; \chi^t_{b^{\Delta t}} \Big\}, \qquad \forall \; x \in \mathbb R.
    \end{aligned}
\end{equation}
Notice that the $L^1$ distance between $a^{\Delta t}(t, \cdot)$ and $u(t, \cdot)$ can be estimated in terms of the source $\mu_0$. In fact, by definition of $a^{\Delta t}$, we have
\begin{equation}
\begin{aligned}
    \int_{-M}^M |a^{\Delta t}(t, x) -u(t, x) | \dif x & = \bs \omega_h\Big(\Big\{ \gamma_h \in \Gamma_h \quad \big| \quad t \in I_{\gamma_h}, \quad t^-_{\gamma_h} \in (t-\Delta t, t)\Big\}\Big)\\
    & \\
    & \leq \mu_0^+(S_t^{M, \Delta t})\\
    \end{aligned}
\end{equation}
where 
$$
S_t^{M, \Delta t}:= (t-\Delta t, t) \times (-M- \Delta t, M + \Delta t) \times (0, 1).
$$
This implies, integrating in $(0, T)$ and using Fubini's Theorem:
\begin{equation}\label{eq:uminusa}
    \int_{\Delta t}^T \int_{-M}^M |a^{\Delta t}(t, x) -u(t, x) | \dif x \dif t \leq \Delta t \cdot |\mu_0^+|\big([0,T] \times (-M-\Delta t, M+\Delta t) \times (0, 1) \big).
\end{equation}
Entirely symmetrical statements holds for the distance of $u$ from $b^{\Delta t}$:
\begin{equation}\label{eq:uminusb}
\begin{aligned}
    &  \int_{-M}^M |a^{\Delta t}(t, x) -u(t, x) | \dif x \leq \mu_0^-(S_t^{M, \Delta t}),\\
    & \int_{\Delta t}^T \int_{-M}^M |b^{\Delta t}(t, x) -u(t, x) | \dif x \dif t \leq \Delta t \cdot |\mu_0^-|\big([0,T] \times (-M-\Delta t, M+\Delta t) \times (0, 1) \big).\\
    \end{aligned}
\end{equation}
By triangular inequality, the difference $a^{\Delta t}(t, x)-b^{\Delta t}(t, x)$ lies in $L^1$ as well and 
\begin{equation}\label{eq:abdifference}
\begin{aligned}
& \int_{-M}^M |a^{\Delta t}(t, x)-b^{\Delta t}(t, x)| \dif x  \leq  |\mu_0|(S_t^{M, \Delta t}),\\
    & \int_{\Delta t}^T \int_{-M}^M |a^{\Delta t}(t, x)-b^{\Delta t}(t, x)| \dif x \dif t \leq \Delta t |\mu_0|\big([0,T] \times (-M-\Delta t, M+\Delta t) \times (0, 1) \big).\\
    \end{aligned}
\end{equation}

\vspace{0.5cm}
\textbf{2.}
Now fix $x < y$ and $\varepsilon> 0$ small. Take a curve $\bar \gamma_h \in \Gamma_h^{t, \Delta t}$ such that $|\bar \gamma_h(t) -(y, a^{\Delta t}(t,y))| \leq \varepsilon$. Since $\mu_1$ is positive and $(y, a^{\Delta t}(t,y)) \in \mr{supp} \; \chi_{a^{\Delta t}}^t$, by~\eqref{eq:mu1def} we can assume that $t \mapsto \bar \gamma_h^v(t)$, $t \in I_{\bar \gamma_h}$, is decreasing.
By definition of $b^{\Delta t}$ there exist positive measure set of curves $G \subset \Gamma_e^{t, \Delta t}$ such that  $$|\gamma_e(t)-(x, b^{\Delta t}(t,x))| \leq \varepsilon.$$
By Lemma~\ref{lemma:nocrossing}, it holds
\begin{equation}\label{eq:nocrossingG}
\omega_e\Big( \Big\{ \gamma_e \in G \quad \big| \quad \gamma_e(t-\Delta t) >\bar \gamma_h(t-\Delta t) \quad \text{and} \quad \gamma_e(t) < \bar \gamma_h(t)\Big\}\Big) = 0
\end{equation}
Moreover, since $\mu_1 \geq 0$ and again by \eqref{eq:mu1def}, $I_{\gamma_e} \ni t \mapsto \gamma^v_e(t)$ is increasing for $\omega_e$-a.e. $\gamma_e \in G$. Therefore, thanks to the characteristic equation~\eqref{eq:charaeqla}, it holds
\begin{equation}\label{eq:Oleinikpreineq}
\begin{aligned}
    &\bar \gamma_h^x(t-\Delta t) < y -\Delta t\cdot a^{\Delta t}(t, y) + 2\varepsilon\\
    & \gamma_e^x(t-\Delta t) > x -\Delta t\cdot b^{\Delta t}(t, x) - 2\varepsilon, \quad \text{for $\omega_e$-a.e. $\gamma_e \in G$}.
    \end{aligned}
\end{equation}
Then, combining~\eqref{eq:nocrossingG} with~\eqref{eq:Oleinikpreineq} and letting $\varepsilon \to 0$ we obtain
\begin{equation}
    a^{\Delta t}(t, y) -b^{\Delta t}(t, x) \leq \frac{y-x}{\Delta t} , \qquad \text{for every $x < y$}
\end{equation}
Setting $h = y-x$ and integrating in an interval $[-M, M]$, for some $M > 0$, we obtain
\begin{equation}
    \int_{-M}^M (a^{\Delta t}(t, x+h)-b^{\Delta t}(t, x) )^+ \dif x \leq 2 M h \Delta t^{-1}.
\end{equation}
Moreover, combining with the inequality above the $L^{\infty}$ bounds for $a^{\Delta t}, b^{\Delta t}$,  and the inequality~\eqref{eq:abdifference} of Step 1, we obtain
\begin{equation}
    \begin{aligned}
        -\int_{-M}^M & (a^{\Delta t}(t, x+h)-b^{\Delta t}(t, x))^-  \dif x  = -\int_{-M}^M (a^{\Delta t}(t, x+h)- b^{\Delta t}(t, x))^+ \dif x \\
        &+ \int_{-M}^M (a^{\Delta t}(t, x+h)- b^{\Delta t}(t, x)) \dif x\\
        & \geq -2M h\Delta t^{-1} -2h-|\mu_0|(S_t^{M, \Delta t}) \\
    \end{aligned}
\end{equation}
which in turn yields the bound 
\begin{equation}\label{eq:Oleinik}
\begin{aligned}
    \int_{-M}^M |a^{\Delta t}(t, x+h) &-b^{\Delta t}(t, x)|  \dif x \\
    & \leq C \Big( h\Delta t^{-1}+ |\mu_0|(S_t^{M, \Delta t})\Big),
\end{aligned}
\end{equation}
for all $t > 0$ and $\Delta t < t$, with $C$ depending only on $M$. 

\vspace{0.5cm}
\textbf{Step 3.} Fix $\delta > 0$ and  $h^{1/2} < \delta$. Then, for every $\Delta t < \delta$, we estimate the $L^1$ norm of the difference $\bs u(t,x) -u(t,x+h)$ by
\begin{equation}
    \begin{aligned}
        \int_{\delta}^T \int_{-M}^M |u(t, x) -u(t, x+h)| \dif x  \dif t & \leq  \int_{\delta}^T \int_{-M}^M |u(t, x) -b^{\Delta t}(t, x)| \dif x \dif t+\\
        & + \int_{\delta}^T \int_{-M}^M |b^{\Delta t}(t, x)-a^{\Delta t}(t, x+h)| \dif x \dif t + \\
     & + \int_{\delta}^T\int_{-M}^M |a^{\Delta t}(t, x+h)-u(t, x+h)| \dif x \dif t \\
        & \leq 2 \Delta t\cdot |\mu_0|((0, T) \times (-M-\delta, M +\delta) \times (0, 1)) \\
        & + \int_{\delta}^T \int_{-M}^M |a^{\Delta t}(t, x+h)-b^{\Delta t}(t, x)| \dif x \dif t
    \end{aligned}
\end{equation}
where the inequality in the last line follows by~\eqref{eq:uminusa}, \eqref{eq:uminusb}.
By \eqref{eq:Oleinik}, since for $t \in (\delta, T)$ one has $\Delta t < \delta < t$, we obtain
$$
 \int_{\delta}^T \int _{\mathbb R} |a^{\Delta t}(t, x+h)-b^{\Delta t}(t, x)| \dif x \dif t \leq C(h\Delta t^{-1} + \Delta t)
$$
so that in total, for another constant $C$ depending on $M, T$ and $|\mu_0|$, we obtain
\begin{equation}
    \int_{\delta}^T \int_{-M}^M |u(t, x) -u(t, x+h)| \dif x  \dif t  \leq C(h\Delta t^{-1} + \Delta t), \qquad \forall \; \Delta t < \delta
\end{equation}
Choosing $\Delta t = h^{1/2}$ (which is possible since $h^{1/2} < \delta$ by assumption), we obtain 
\begin{equation}
 \int_{\delta}^T \int_{-M}^M \frac{|u(t, x) -u(t, x+h)|}{h^{1/2}} \dif x  \dif t  <2 C, \qquad \forall h \quad \text{such that} \; h^{1/2} < \delta
\end{equation}
Instead if $h^{1/2} \geq \delta$, we obtain trivially
\begin{equation}
 \int_{\delta}^T \int_{-M}^M \frac{|u(t, x) -u(t, x+h)|}{h^{1/2}} \dif x  \dif t  \leq\frac{2MT}{\delta}.
\end{equation}
This proves that
\begin{equation}
 \sup_{h > 0} \int_{\delta}^T \int_{K} \frac{|u(t, x) -u(t, x+h)|}{h^{1/2}} \dif x  \dif t  \leq \frac{C}{\delta}, \qquad \forall \quad K \subset \mathbb R \; \text{compact}.
\end{equation}
\end{proof}

\section{Applications to the Euler system with $\gamma = 3$}\label{sec:isentropic}
In this section we apply the results of this paper to the system of isentropic gas dynamics with $\gamma = 3$, for the evolution of the density $\rho$ and the momentum $m = \rho u$ of an isentropic gas:
\begin{equation}\label{eq:iso}
\begin{aligned}
    & \partial_t \rho + \partial_x m = 0, \\
    & \partial_t m  + \partial_x (m^2/\rho + \rho^3/3) = 0
    \end{aligned}
\end{equation}
The Riemann invariants of the system are 
\begin{equation}\label{eq:RIiso}
w = \frac{m}{\rho} -\rho, \qquad  z = \frac{m}{\rho} + \rho
\end{equation}
with corresponding eigenvalues
$$
\lambda_1(w,z) = w, \qquad \lambda_2(w,z) = z.
$$
We consider solutions that take values in the compact set
$$
\mathcal K := \Big\{(\rho, m) \; \Big| \; |(\rho, m)| \leq M, \quad \rho \geq c\Big\}
$$
where $c, M$ are fixed constants from now on.
\begin{defi}[Entropy solutions in $\mc K$]\label{defi:entropyK}
A function $\bs u = (\rho, m) \in \mathbf L^\infty([0,T] \times \mathbb R; \mc K)$ is an entropy solution in $\mc K$ of \eqref{eq:iso} if it is a weak solutions of \eqref{eq:iso} and it dissipates every convex entropy $\eta : \mathcal K \to \mathbb R$:
$$
\partial_t \eta(\bs u) + \partial_x q(\bs u) \leq 0 \quad \text{in $\mathscr D^\prime$}
$$
where $q$ is the corresponding entropy flux $\nabla q =\nabla \eta  D f$, and 
$$
f = \begin{pmatrix}
    m \\
    \frac{\rho^3}{3} + \frac{m^2}{\rho}
\end{pmatrix}.
$$
\end{defi}
\begin{remark}
    Some comments are here in order. First,  recall that the convexity of the entropy must be checked in the conserved variables $(\rho, m)$. Secondly, a more subtle point is the following: in the definition of an entropy solution in $\mc K$ we require \emph{all} convex entropies $\eta : \mathcal{K} \to \mathbb{R}$ to be dissipated. However, it may happen that such entropies cannot be extended as convex functions to the whole set $\{(\rho, m) \in \mathbb R^+ \times \mathbb R) \; | \; \rho \neq 0\}$. An example of class of solutions that fits our definition is the one vanishing viscosity solutions with the identity  viscosity matrix such that $\mr{Im} \, \bs u^\varepsilon \subset \mathcal K$ for all  $\varepsilon > 0$, where $\bs u^\varepsilon$ are the viscous approximations solving 
    $$
    \bs u^\varepsilon_t + f(\bs u^\varepsilon)_x = \varepsilon \bs u^\varepsilon_{xx}.
    $$
\end{remark}
We recall that a function $\eta\in C^2$ is an entropy of the system if and only if it satisfies, in Riemann coordinates $(w,z)$,
\begin{equation}\label{eq:entropyiso}
    \eta_{wz}(w,z) = \frac{-\lambda_{1z}}{\lambda_1-\lambda_2} \eta_w(w,z) +  \frac{\lambda_{2w}}{\lambda_1-\lambda_2} \eta_z(w,z) = 0. 
\end{equation}
It is well known that the system \eqref{eq:iso} admits a strictly convex entropy, the energy:
$$
\eta_E(\rho, m) = \frac{1}{2} m^2/\rho + \frac{1}{6}\rho^3.
$$
In the following Proposition we use the energy to prove that, if an entropy solution $(\rho, m)$ satisfies $\rho \geq c >0$ in $[0,T] \times \mathbb R$, then its Riemann invariants are quasi-entropy solutions to the Burgers equation.
\begin{prop}\label{prop:wzfes}
    Let $\bs u = (\rho, m) \in \mathbf L^\infty([0,T] \times \mathbb R, \mc K)$ be an entropy solution in $\mc K$ of \eqref{eq:iso}. Then $w = u-\rho$ and $z = u+ \rho$ are quasi-entropy solutions of the Burgers equation, i.e. they satisfy Definition \ref{defi:fes} with $f (u)= u^2/2$.
\end{prop}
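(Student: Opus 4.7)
The plan is to exploit the very special feature of the $\gamma=3$ isentropic Euler system that in Riemann coordinates the two characteristic speeds are precisely $\lambda_1(w,z)=w$ and $\lambda_2(w,z)=z$, so that both $\lambda_{1z}$ and $\lambda_{2w}$ vanish identically. Inserting this into the entropy equation \eqref{eq:entropyiso}, one finds $\eta_{wz}=0$, which means that the $C^2$ entropies of \eqref{eq:iso} are exactly the functions of the form $\eta(w,z)=A(w)+B(z)$ with $A,B\in C^2$. By symmetry we only argue for $w$; the case of $z$ is identical.

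Next I will compute the Burgers-type flux associated with an entropy $\eta(\bs u)=A(w(\bs u))$. Since $w$ is the first Riemann invariant, $\nabla_{\bs u} w$ is proportional to the left eigenvector $\ell_1$ of $Df$, hence $\nabla_{\bs u} w \cdot Df=\lambda_1\,\nabla_{\bs u} w=w\,\nabla_{\bs u} w$. Consequently the entropy flux $Q$ of $A(w)$ for the Euler system, characterised by $\nabla Q=A'(w)\,\nabla w\cdot Df$, satisfies $\nabla Q = w A'(w)\,\nabla w = \nabla[q(w)]$, where $q(w)=\int_0^w sA'(s)\dif s$ is precisely the Burgers entropy flux corresponding to the entropy $A$. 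Up to an irrelevant additive constant we may thus identify $Q(\bs u)=q(w(\bs u))$, and as a by-product one recovers, on smooth solutions, $w_t+ww_x=0$, i.e.\ Burgers' equation for $w$.

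It then remains to show that $\partial_t A(w)+\partial_x q(w)=\partial_t\eta(\bs u)+\partial_x Q(\bs u)$ is a locally finite measure for every $A\in C^2$. Here we have available only the one-sided control coming from dissipation of \emph{convex} Euler entropies. The key step is to write $A(w)$ as a difference of two such convex entropies on $\mc K$: since the energy
\[
\eta_E(\rho,m)=\frac{1}{2}\frac{m^2}{\rho}+\frac{1}{6}\rho^3 \;=\;\frac{z^3-w^3}{12}
\]
is itself of the form ``$A(w)+B(z)$'' and is strictly convex in $(\rho,m)$ with Hessian uniformly bounded below on the compact set $\mc K$ (where $\rho\ge c>0$), and since the Hessian of $(\rho,m)\mapsto A(w(\rho,m))$ is uniformly bounded on $\mc K$, one can pick $K>0$ sufficiently large so that both $K\eta_E$ and $K\eta_E+A(w)$ are convex $C^2$ entropies of the Euler system on $\mc K$. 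By Definition \ref{defi:entropyK} each of them is dissipated, so the associated distributions are non-positive locally finite measures; subtracting the two identities yields the desired measure representation for $\partial_t A(w)+\partial_x q(w)$.

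The only point that requires a modicum of care is the strict convexity estimate on $\eta_E$ in the conserved variables $(\rho,m)$ on $\mc K$, which must be uniform with respect to the lower bound $\rho\ge c>0$; this is where the assumption $\bs u\in\mathbf L^\infty([0,T]\times\R;\mc K)$ enters crucially, both to make $w=m/\rho-\rho$ a well-defined $C^2$ function of $(\rho,m)$ and to allow the convex-combination argument above. Once this is in place the proof is complete: $w$, and symmetrically $z$, satisfies Definition \ref{defi:fes} with flux $f(v)=v^2/2$.
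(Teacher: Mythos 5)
Your proposal is correct and follows essentially the same route as the paper: both arguments hinge on the observation that adding a sufficiently large multiple of the strictly convex energy $\eta_E$ to the entropy $A(w)$ produces a convex entropy on the compact set $\mc K$, so that $\partial_t A(w)+\partial_x q(w)$ can be written as the difference of two nonpositive (hence locally finite) entropy dissipation measures. The extra computations you include — the $\eta_{wz}=0$ characterization of entropies and the explicit verification via the left eigenvector that the induced flux coincides with the Burgers entropy flux $q'(w)=wA'(w)$ — are details the paper dismisses as "immediate to check," but the underlying argument is the same.
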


\begin{proof}
    We prove that $w$ is a quasi-entropy solution of the Burgers equation, the proof for $z$ being identical. By definition of quasi-entropy solution, we need to show that for every $\eta : \mathbb R\to \mathbb R$, $\eta \in C^2$, there holds
    $$
    \mu_\eta = \partial_t \eta(w) + \partial_x q(w) \in \mathscr M([0,T] \times \mathbb R)
    $$
    where $q$ is the entropy flux of $\eta$, $q^\prime(w) = w \eta^\prime(w)$. Starting from $\eta$, we construct an entropy-entropy flux pair for the isentropic system \eqref{eq:iso} by setting, in Riemann invariants,
    $$
    \bar \eta(w,z) := \eta(w), \qquad \bar q(w, z) := q(w) \qquad \forall \; w < z.
    $$
It is immediate to check that, if $\eta, q$ are entropy entropy fluxes for the Burgers equation, then $\bar \eta, \bar q$ are entropy - entropy fluxes for the isentropic system \eqref{eq:iso}. We now show that $\eta(\rho, m)$ is $C^2$ in the region $\rho >0$ when considered as a function of the original variables $\rho, m$. This is fairly trivial, because we have $\eta \in C^2$, and morevoer the change of coordinates \eqref{eq:RIiso} is $C^\infty$ in $\rho > 0$. This shows that $\bar \eta \in C^2(\mathbb R^+\times \mathbb R \setminus \{\rho = 0\})$.
Clearly, since the energy $\eta_E$ is strictly convex, $\bar \eta$ is $C^2$ and $\mc K$ is a compact set, there exists a constant $C > 0$ big enough such that the entropy $S := \bar \eta + C \eta_E$ is convex in $\mc K$. Therefore, since by assumption $\bs u$ is an entropy solution in $\mc K$ of \eqref{eq:iso} (Definition \ref{defi:entropyK}), we have
\begin{equation}
    0 \geq \mu_S := \partial_t S(\bs u) + \partial_x Q(\bs u) \in \msc M([0,T] \times \mathbb R)
\end{equation}
where $Q$ is the entropy flux of $S$, therefore $\mu_S$ is a nonpositive measure. But now by linearity of the operator $\eta \mapsto \mu_{\eta}$ we can write 
$$
\mu_\eta = \mu_{\bar \eta} = \mu_S - C \mu_{\eta_E} \in \msc M([0,T] \times \mathbb R)
$$
and this proves the claim.
\end{proof}

The following Theorem is due to Lions, Perthame and Tadmor in \cite{LPT_kineticise}. We consider here directly the version for the exponent $\gamma = 3$.
\begin{thm}\label{thm:lptkin}
    Let $\bs u$ be an entropy solution to \eqref{eq:iso}. Then for some nonpositive measure $m \in \msc M([0,T] \times \mathbb R)$, there holds
    \begin{equation}
        \partial_t g(t,x,v)  + v \partial_x g(t,x,v) = \partial_{vv} m
    \end{equation}
    where 
    $$
    g(t,x,v) := \begin{cases}
        1 & \text{if $w(t,x) \leq v \leq z(t,x)$}\\
        0 & \text{otherwise}
    \end{cases}
    $$
    where $w(t,x), z(t,x)$ are the Riemann coordinates of $\bs u(t,x)$.
\end{thm}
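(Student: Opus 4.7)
The plan is to test the kinetic distribution $T:=\partial_t g+v\partial_x g$ against a family of Riemann-invariant system entropies, exploiting that for $\gamma=3$ the eigenvalues coincide with the Riemann invariants ($\lambda_1=w$, $\lambda_2=z$). The compatibility condition $\eta_{wz}=0$ forces every smooth entropy to split as $A(w)+B(z)$. For $\phi\in C^2$ and primitives $\Phi'=\phi$, $\tilde\Psi'(v)=v\phi(v)$, introduce the Riemann-invariant entropy-flux pair $\eta_\phi:=\Phi(z)-\Phi(w)$, $q_\phi:=\tilde\Psi(z)-\tilde\Psi(w)$. A direct computation yields the dual identities $\eta_\phi=\int_{\mathbb R} g(v)\phi(v)\,dv$ and $q_\phi=\int_{\mathbb R} g(v)v\phi(v)\,dv$, and an integration by parts gives $\langle T,\psi\phi\rangle=\langle\mu_{\eta_\phi},\psi\rangle$ for every $\psi\in C^1_c((0,T)\times\mathbb R)$, where $\mu_{\eta_\phi}:=\partial_t\eta_\phi(\bs u)+\partial_x q_\phi(\bs u)$.

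The crucial step is to show that $\eta_\phi$ is convex on $\mathcal K$ whenever $\phi$ is convex. I would do this via the elementary kinetic entropies
\begin{equation*}
\eta_\zeta(\bs u):=\int_{\mathbb R} g(v)(v-\zeta)_+\,dv=\begin{cases}2(m-\zeta\rho) & \text{if }\zeta\le w,\\ (z-\zeta)^2/2 & \text{if }w<\zeta<z,\\ 0 & \text{if }\zeta\ge z,\end{cases}
\end{equation*}
which is piecewise affine or the square of a smooth function, continuous and $C^1$ across the transition lines $\{m=\zeta\rho\pm\rho^2\}$; a direct Hessian computation in the middle region (crucially using $\rho\ge c>0$) yields positive semidefiniteness, so $\eta_\zeta$ is convex on $\mathcal K$ for every $\zeta\in\mathbb R$. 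The Taylor representation $\phi(v)=\alpha v+\beta+\int_{\mathbb R}(v-\zeta)_+\phi''(\zeta)\,d\zeta$ then produces the superposition formula
\begin{equation*}
\eta_\phi=\int_{\mathbb R}\phi''(\zeta)\,\eta_\zeta\,d\zeta+(\text{affine in }\bs u),
\end{equation*}
so that for $\phi$ convex, $\eta_\phi$ is a nonnegative superposition of convex entropies and itself convex on $\mathcal K$. By Definition~\ref{defi:entropyK}, $\mu_{\eta_\phi}\le 0$ in $\mathscr D'((0,T)\times\mathbb R)$.

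To produce the measure $m$, observe that $\phi\mapsto\mu_{\eta_\phi}$ is linear and annihilates affine and quadratic $\Phi$: for $\Phi(v)=\alpha v+\beta$ one has $\eta_\phi=2\alpha\rho$, $q_\phi=2\alpha m$, and $\mu_{\eta_\phi}=2\alpha(\partial_t\rho+\partial_x m)=0$ by the first equation in~\eqref{eq:iso}; for $\Phi(v)=v^2/2$ one gets $\eta_\phi=2m$, $q_\phi=2(m^2/\rho+\rho^3/3)$, which is annihilated by the second. Hence the map factors through $\phi''$, and the same uniform-boundedness and Riesz argument used in the proof of Proposition~\ref{prop:feskin} produces a locally finite measure $m$ on $(0,T)\times\mathbb R\times\mathbb R$ such that $\langle T,\psi\phi\rangle=\int\psi(t,x)\phi''(v)\,dm(t,x,v)$, i.e.\ $T=\partial_{vv}m$ in $\mathscr D'$. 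The sign $m\le 0$ follows at once: for $\psi,\phi''\ge 0$ the left-hand side is $\le 0$ by the previous step, and tensor products $\psi\otimes\phi''$ are dense in $C^0_c((0,T)\times\mathbb R\times\mathbb R;[0,\infty))$.

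The main obstacle is the convexity of the elementary entropies $\eta_\zeta$ on $\mathcal K$. This is a finite-dimensional check, but it is the heart of the matter and explains why the hypothesis $\rho\ge c>0$ built into $\mathcal K$ is essential: near vacuum the formulation degenerates, and in the general $\gamma$-law setting of~\cite{LPT_kineticise} a nontrivial weight $\chi(v;\bs u)$ in place of $\mathbf 1_{[w,z]}$ is needed to restore convexity. Once Step~2 is established, the construction of $m$ is purely functional-analytic and mirrors the passage from entropy inequalities to the kinetic formulation in the proof of Proposition~\ref{prop:feskin}.
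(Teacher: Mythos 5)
The paper does not give a proof of this theorem; it is cited from Lions--Perthame--Tadmor \cite{LPT_kineticise}, specialized to $\gamma=3$. Your argument is correct and is in substance the LPT proof for this exponent: the entropy pair $(\eta_\phi,q_\phi)$ is the zeroth and first $v$-moment of $g\,\phi$; the elementary kinetic entropies $\eta_\zeta(\rho,m)=\int g(v)(v-\zeta)_+\,dv$ are convex on $\{\rho>0\}$ (in the middle region $w<\zeta<z$ the Hessian of $(z-\zeta)^2/2$ has determinant $(z-\zeta)(\zeta-w)/\rho^4\ge 0$ and positive $\partial_{mm}$ entry $1/\rho^2$); convex $\phi$ superpose the $\eta_\zeta$ via the Taylor identity; and the nonpositivity of $m$ follows from the entropy inequality as you say. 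One correction worth noting: the theorem statement has a slip and $m$ should be a measure on $[0,T]\times\mathbb R\times\mathbb R_v$ rather than on $[0,T]\times\mathbb R$, and your construction correctly produces it on the three-dimensional set. Also, the uniform bound $\rho\ge c>0$ in $\mathcal K$ guarantees $C^2$-regularity and uniform constants, but positive semidefiniteness of the Hessian holds pointwise for any $\rho>0$; the original LPT theorem even allows vacuum, so that hypothesis is not as essential to this particular statement as you suggest.
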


In the following theorem we use Theorem \ref{thm:1drect} and Proposition \ref{prop:wzfes} to deduce the measure $(p_{t,x})_\sharp m$ is concentrated on a $1$-rectifiable set $\mathbf S \subset [0,T] \times \mathbb R$.
\begin{thm}\label{thm:isereg}
   Let $\bs u =(\rho, m) \in \mathbf L^\infty([0,T] \times \mathbb R; \mathcal K)$ be an entropy solution of \eqref{eq:iso}. Then $(p_{t,x})_\sharp m$ is concentrated on a $1$-rectifiable set $\mathbf S \subset [0,T] \times \mathbb R$.
\end{thm}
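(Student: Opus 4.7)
The plan is to reduce the claim to the one-dimensional rectifiability result of Theorem~\ref{thm:1drect} applied separately to the Riemann invariants $w, z$, and to transfer their rectifiability to the kinetic measure $m$ via a clever choice of Burgers test entropy.

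By Proposition~\ref{prop:wzfes}, the Riemann invariants $w = m/\rho - \rho$ and $z = m/\rho + \rho$ are bounded quasi-entropy solutions of Burgers. Proposition~\ref{prop:feskin} yields minimal kinetic pairs $(\mu_0^w,\mu_1^w)$ and $(\mu_0^z,\mu_1^z)$, and since the Burgers flux $v^2/2$ satisfies~\eqref{eq:genuinon} trivially, Theorem~\ref{thm:1drect} provides $1$-rectifiable sets $\mathbf J^w, \mathbf J^z \subset [0,T] \times \mathbb R$ on which $\nu_1^w := (p_{t,x})_\sharp|\mu_1^w|$ and $\nu_1^z := (p_{t,x})_\sharp|\mu_1^z|$ are concentrated.

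Next I would establish the identity $(p_{t,x})_\sharp(\Psi'''(v)\,m) = \mu_\Psi^z - \mu_\Psi^w$ for every $C^2$ Burgers entropy $\Psi$. The derivation consists of testing the system kinetic equation $\partial_t g + v\,\partial_x g = \partial_{vv} m$ of Theorem~\ref{thm:lptkin} against $\phi(t,x)\Psi'(v)$: the left-hand side yields $\int \phi\,\Psi'''\,dm$ after integration by parts in $v$, while the right-hand side, using $g = \mathbf 1_{\{w \leq v \leq z\}}$ and the elementary identities $\int_w^z \Psi'(v)\,dv = \Psi(z)-\Psi(w)$ and $\int_w^z v\Psi'(v)\,dv = \Xi(z)-\Xi(w)$ (with $\Xi'(v) = v\Psi'(v)$ the corresponding Burgers entropy flux), produces $\langle \mu_\Psi^z - \mu_\Psi^w, \phi\rangle$. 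Specialising to the cubic entropy $\Psi(v)=v^3/6$, for which $\Psi'''\equiv 1$, $\Psi''(v)=v$ and $\Psi'(0)=0$, this gives the key relation
\begin{equation*}
    (p_{t,x})_\sharp m \,=\, \mu_\Psi^z - \mu_\Psi^w.
\end{equation*}

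To conclude, I would invoke the decomposition of the entropy dissipation of a scalar quasi-entropy solution coming from the proof of Proposition~\ref{prop:feskin}, namely $\mu_\eta = -(p_{t,x})_\sharp(\eta''(v)\mu_1) + \eta'(0)\,\mu_{\eta_0}$; for our choice of $\Psi$ the source contribution vanishes and one obtains $\mu_\Psi^w = -(p_{t,x})_\sharp(v\,\mu_1^w)$, and analogously for $z$. Since $w,z$ are bounded on $\mc K$, $|\mu_\Psi^w| \leq C\,\nu_1^w$ is concentrated on $\mathbf J^w$ and $|\mu_\Psi^z| \leq C\,\nu_1^z$ on $\mathbf J^z$. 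Therefore $(p_{t,x})_\sharp m$ is concentrated on the $1$-rectifiable set $\mathbf S := \mathbf J^w \cup \mathbf J^z$. The main point of care will be the distributional justification of the identity relating $(p_{t,x})_\sharp(\Psi''' m)$ to $\mu_\Psi^z - \mu_\Psi^w$; boundedness of $\bs u$ in $\mc K$ confines all kinetic measures to a bounded $v$-slab, which makes the necessary integrations by parts rigorous without further technical obstacles.
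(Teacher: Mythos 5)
Your reduction to the identity
\begin{equation*}
(p_{t,x})_\sharp m \;=\; \mu_\Psi^z - \mu_\Psi^w, \qquad \Psi(v)=v^3/6,
\end{equation*}
is correct and in fact closely related to what the paper's proof does implicitly, but the final step of the argument contains a genuine gap.

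The error is in the estimate $|\mu_\Psi^w| \leq C\,\nu_1^w$. The decomposition $\mu_\eta = -(p_{t,x})_\sharp(\eta''\mu_1) + \eta'(0)\,\mu_{\eta_0}$ that you invoke is only valid for the \emph{specific} (non-minimal) pair constructed at the end of the proof of Proposition~\ref{prop:feskin}, where $\mu_0 = \mu_{\eta_0}\otimes\delta_0$. For that pair, Theorem~\ref{thm:1drect} gives no rectifiability of $\nu_1$, because that theorem is established only for a \emph{minimal} pair $(\mu_0,\mu_1)$. If instead you use a minimal pair $(\sigma_0,\sigma_1)$ for $w$, then the correct identity is $\mu_\Psi^w = (p_{t,x})_\sharp(\Psi'\,\sigma_0) - (p_{t,x})_\sharp(\Psi''\,\sigma_1)$, and the term $(p_{t,x})_\sharp(\Psi'\,\sigma_0)$ does \emph{not} vanish: by Proposition~\ref{prop:sourcestruct}, on $\mathbf J^c$ the source $\sigma_0$ is a delta mass at some $\bar w(t,x)$ which is generically nonzero, so $\Psi'(\bar w)=\bar w^2/2\neq 0$. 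Since the Riemann invariants of the Euler system are coupled, the Burgers source terms $\sigma_0,\mu_0$ are nonzero, and nothing in the scalar theory controls $(p_{t,x})_\sharp|\sigma_0|$ on $\mathbf J^c$. So the union $\mathbf J^w\cup\mathbf J^z$ is not a priori a concentration set for $(p_{t,x})_\sharp m$, and your argument is in effect circular: proving $(p_{t,x})_\sharp m = \mu_\Psi^z-\mu_\Psi^w$ is rectifiable is exactly the original problem.

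The paper closes this gap with an additional argument that uses the system itself, not just the scalar theory for $w$ and $z$ separately. After passing to minimal pairs and identifying the common jump set $\mathbf S$, Proposition~\ref{prop:sourcestruct} gives $\mu_0\llcorner\mathbf S^c = e\,\delta_{\bar z}\otimes\nu_0$ and $\sigma_0\llcorner\mathbf S^c = \tilde e\,\delta_{\bar w}\otimes\xi_0$. One then feeds the \emph{conservation of mass} $(p_{t,x})_\sharp(\mu_0-\sigma_0)=0$ into this to get $e=\tilde e$ and $\nu_0\llcorner\mathbf S^c = \xi_0\llcorner\mathbf S^c =: \mathbf j$, and then the \emph{conservation of momentum} (the first $v$-moment of the same identity) forces $\bar z = \bar w$ $\mathbf j$-a.e. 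This shows $\partial_{vv}m\llcorner\mathbf S^c = \partial_v(\mu_1-\sigma_1)\llcorner\mathbf S^c + (\mu_0-\sigma_0)\llcorner\mathbf S^c = 0$, whence $m\llcorner\mathbf S^c=0$ by compact $v$-support. If you want to rescue your approach, you would need to reproduce precisely this cancellation of the $\mu_0,\sigma_0$ contributions, i.e.\ you still need the Rankine--Hugoniot / conservation structure of the $2\times 2$ system — the Burgers-level analysis alone is not enough.
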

\begin{proof}
   \textbf{1.} From Proposition \ref{prop:wzfes} we deduce that $w, z$ are quasi-entropy solutions to the Burgers equation. By Proposition \ref{prop:feskin} this is equivalent to 
    \begin{equation}\label{eq:zchi}
        \partial_t \chi  + v \partial_x \chi = \mu_0 + \partial_v \mu_1, \qquad \chi(t,x,v) := \begin{cases}
            1 & \text{if $ v \leq z(t,x)$}\\
            0 & \text{otherwise}
        \end{cases}
    \end{equation}
    \begin{equation}\label{eq:wchi}
        \partial_t \psi + v \partial_x \psi = \sigma_0 + \partial_v \sigma_1, \qquad \psi(t,x,v) := \begin{cases}
            1 & \text{if $v \leq w(t,x)$}\\
            0 & \text{otherwise}
            \end{cases}
    \end{equation}
for some locally finite measures $\mu_0, \sigma_i$.

Recall the Definition of the sets $\mathbf P_\chi, \mathbf P_\psi$ given in \eqref{eq:pairset}, which are the sets of all the pairs $(\mu_0, \mu_1)$ and $(\sigma_0, \sigma_1)$ of locally finite measures for which \eqref{eq:zchi} and \eqref{eq:wchi} are satisfied, respectively. We can assume, up to replacing the measures $\mu_i, \sigma_i$, that the pairs $(\mu_0, \mu_1)$ and $(\sigma_0, \sigma_1)$ are minimal with respect to the relation  $\preceq$ introduced in Definition \ref{defi:relation}. Therefore from Theorems \ref{thm:dimdthm}, \ref{thm:1drect} we readily deduce that $\nu_1 := (p_{t,x})_\sharp |\mu_1|$ and $\xi_1 := (p_{t,x})_\sharp |\sigma_1|$ are concentrated on the $1$-rectifiable set $\mathbf S \subset [0,T] \times \mathbb R$ which is defined as 
\begin{equation}
    \mathbf S := \Bigg\{(t,x) \in [0,T] \times \mathbb R \; \Big| \; \limsup_{r \downarrow 0} \frac{(p_{t,x})_\sharp\big(|\mu_1| +|\sigma_1| + |\mu_0| + |\sigma_0|\big)(B_r(t,x))}{r} > 0  \Bigg\}
\end{equation}

    \vspace{0.3cm}
     \noindent \textbf{2.}  Consider the disintegrations, for $i= 0,1$,
     $$
     \mu_i = \mu_{i}^{(t,x)} \otimes \nu_i, \qquad \sigma_i = \sigma_{i}^{(t,x)} \otimes \xi_i$$
     $$\nu_i := (p_{t,x})_\sharp |\mu_i|, \quad \xi_i := (p_{t,x})_\sharp |\sigma_i|
     $$
Thanks to Proposition \ref{prop:sourcestruct}, we deduce that for some measurable functions $\bar w(t,x)$ and $\bar z(t,x)$
\begin{equation}\label{eq:inSc}
\begin{aligned}
& \mu_0^{(t,x)} = e(t,x)\delta_{\bar z(t,x)},\qquad \text{for  $\nu_0$-a.e. $(t,x) \in \mathbf S^c$} \\
& \sigma_0^{(t,x)} = \wt e(t,x)\delta_{\bar w(t,x)},\qquad \text{for  $\xi_0$-a.e. $(t,x) \in \mathbf S^c$}
\end{aligned}
\end{equation}
where $e, \wt e$ are some measurable functions with values only in $\{-1,1\}$.
Notice that 
$$
\rho(t,x) = z(t,x)- w(t,x) = \int_{\mathbb R} \chi(t,x,v) - \psi(t,x,v) \dif v, 
$$
$$
m(t,x) = \int_{w(t,x)}^{z(t,x)} v \dif v =  \int_{\mathbb R} v (\chi(t,x,v)-\psi(t,x,v)) \dif v
$$
therefore by subtracting \eqref{eq:wchi} from \eqref{eq:zchi} and integrating in $v$ we obtain the equation for the conservation of mass
\begin{equation}
    0 = \partial_t \rho + \partial_x m = (p_{t,x})_\sharp ( \mu_0 - \sigma_0)  \qquad \text{in $\mathscr D^\prime_{t,x}$}.
\end{equation}
Using \eqref{eq:inSc}, we therefore deduce that 
$$
e(t,x) \nu_0 \llcorner \mathbf S^c = (p_{t,x})_\sharp \mu_0\llcorner \mathbf S^c = (p_{t,x})_\sharp \sigma_0\llcorner \mathbf S^c = \wt e(t,x) \xi_0\llcorner \mathbf S^c \qquad 
$$
and since $\nu_0, \xi_0$ are positive measures and $e, \wt e \in \{-1, 1\}$, we conclude
\begin{equation}
   \xi_0\llcorner \mathbf S^c = \nu_0 \llcorner \mathbf S^c \quad \text{and} \quad  e(t,x) = \wt e(t,x) \qquad \text{for $\nu_0$-a.e. and $\xi_0$-a.e.}
\end{equation}
We deduced that $\mu_0, \sigma_0$ restricted to $\mathbf S^c$ write in the form
\begin{equation}\label{eq:sourceform}
    \mu_0 \llcorner \mathbf S^c = e(t,x) \delta_{\bar z(t,x)} \otimes \mathbf j, \qquad \sigma_0 \llcorner \mathbf S^c = e(t,x) \delta_{\bar w(t,x)} \otimes \mathbf j
\end{equation}
where $\mathbf j := \nu_0\llcorner \mathbf S^c = \xi_0\llcorner \mathbf S^c$.
Using also the elementary identity 
$$
\frac{m(t,x)^2}{\rho(t,x)} + \frac{\rho(t,x)^3}{12} = \int_{w(t,x)}^{z(t,x)}v^2 \dif v = \int_{\mathbb R} v^2 (\chi(t,x,v) - \psi(t,x,v)) \dif v
$$
we deduce, by multiplying \eqref{eq:zchi}, \eqref{eq:wchi} by $v$ and subtracting \eqref{eq:wchi} from \eqref{eq:zchi}, that 
\begin{equation}\label{eq:momentum}
    0 = \partial_t m + \partial_x \Big(\frac{m^2}{\rho} + \frac{\rho^3}{3}\Big) = (p_{t,x})_\sharp (\sigma_1 - \mu_1) + \int  v \dif (\mu_0- \sigma_0)(\cdot,\cdot, v)
\end{equation}
where  we denote by $\int  v \dif (\mu_0- \sigma_0)(\cdot,\cdot, v)$ the measure on $(t,x)$ defined by 
$$
\iint \varphi(t,x) \dif \Big[ \int  v \dif (\mu_0- \sigma_0)(\cdot,\cdot, v) \Big](t,x) := \iiint \varphi(t,x) v \dif \mu_0 - \iiint \varphi(t,x) v \dif \sigma_0
$$
for all $\varphi:[0,T] \times \mathbb R \to \mathbb R$ continuous with compact support.
Since $\sigma_1 \llcorner\mathbf S^c = 0 = \mu_1 \llcorner \mathbf S^c$, we deduce from \eqref{eq:momentum} that
\begin{equation}
   0 =  \int  v \dif (\mu_0- \sigma_0)(\cdot,\cdot, v) \llcorner \mathbf S^c 
\end{equation}
which, using the expression \eqref{eq:sourceform}, means
\begin{equation}
    \bar z(t,x)  \mathbf j =  \bar w(t,x)\mathbf j \qquad \text{for $\mathbf j$-a.e. $(t,x)$.}
\end{equation}
But this just means $\bar w(t,x) = \bar z(t,x)$ for $\mathbf j$-a.e. $(t,x)$. Therefore 
$$
\partial_{vv} m\llcorner \mathbf S^c = \partial_v (\mu_1 - \sigma_1)\llcorner \mathbf S^c  + (\mu_0 - \sigma_0)\llcorner \mathbf S^c = 0.
$$
Since $m$ is compactly supported in $v$, this implies $m = 0$ in $\mathbf S^c$ and this concludes the proof.
\end{proof}
We now analyze the shocks structure in order to obtain more information about the dissipation measures.

\begin{defi}
    A pair of states $\bs u^-, \bs u^+ \in \mc K$  is an \emph{admissible shock} if 
    \begin{itemize}
        \item The Rankine-Hugoniot conditions hold:
          \begin{equation}\label{eq:RHiso}
        \begin{aligned}
        & \frac{z_+^2}{2}-\frac{z_-^2}{2} - \sigma (z_+ - z_-) =       \frac{w_+^2}{2}-\frac{w_-^2}{2} - \sigma (w_+ - w_-)  \\
        & \frac{z_+^3}{3}-\frac{z_-^3}{3} - \sigma \Big(\frac{z_+^2}{2} - \frac{z_-^2}{2}\Big) =       \frac{w_+^3}{3}-\frac{w_-^3}{3} - \sigma \Big(\frac{w_+^2}{2} - \frac{w_-^2}{2}\Big)
        \end{aligned}
    \end{equation}
    \item  The energy dissipation holds:
    \begin{equation}\label{eq:dissiso}
        d_E:= \frac{z_+^4}{4}-\frac{z_-^4}{4} - \sigma \Big(\frac{z_+^3}{3} - \frac{z_-^3}{3}\Big) -       \frac{w_+^4}{4}-\frac{w_-^4}{4} - \sigma \Big(\frac{w_+^3}{3} - \frac{w_-^3}{3}\Big) \leq 0
    \end{equation}
    \end{itemize}
\end{defi}

By the same arguments of \cite{DOW03} we deduce that $\mathscr H^1$-a.e. point $(t,x) \in \mathbf S$ is an admissible shock, where $\bs u^-, \bs u^+$ are the left and right traces on $\mathbf S$.

The proof of the following lemma is classical, and is a special case of the second order contact between the rarefaction and shock curve, together with the fact that the entropy dissipation for genuinely nonlinear systems is cubic with respect to the shock strength; we refer to \cite[Chapter 5]{Bbook} for a proof of this well known facts.
\begin{lemma}\label{lemma:shockstru}
If $\bs u^-, \bs u^+$ is an admissible shock, then there is a positive constant $C > 0$ such that one of the following holds:
\begin{itemize}
    \item[(a)] Shocks of the first family: $w_+ < w_-$, and 
    $$
    |z_+-z_-| \leq C\cdot (w_+-w_-)^3, \qquad \left|\sigma - \frac{w_++w_-}{2}\right| \leq C\cdot (w_+-w_-)^3
    $$
    $$
    d_E(w^+, w^-, z^+, z^-) \leq -C \cdot |w_+-w_-|^3
    $$
    \item[(b)] Shocks of the second family: $z_+ < z_-$, and 
    $$
    |w_+-w_-| \leq C\cdot (z_--z_+)^3, \qquad \left|\sigma - \frac{z_++z_-}{2}\right| \leq C\cdot (z_--z_+)^3
    $$
    $$
    d_E(w^+, w^-, z^+, z^-) \leq - C \cdot|z_+-z_-|^3
    $$
    \end{itemize}
\end{lemma}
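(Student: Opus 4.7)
The plan is to carry out a local second-order Taylor expansion of the shock curves in the Riemann coordinates $(w, z)$, exploiting the fact that for $\gamma = 3$ the eigenvalues coincide with the Riemann invariants, $\lambda_1 = w$ and $\lambda_2 = z$, so that the two characteristic families are treated symmetrically and the classical ``second-order contact of shock and rarefaction curves'' becomes completely explicit.

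First I would reduce the Rankine-Hugoniot system \eqref{eq:RHiso} to a compact algebraic form. Setting $W := w_+ - w_-$, $Z := z_+ - z_-$, $\bar w := (w_++w_-)/2$, $\bar z := (z_++z_-)/2$, $\alpha := \sigma - \bar w$, $\beta := \sigma - \bar z$, a short manipulation rewrites the two RH identities as
\begin{equation*}
\beta Z = \alpha W, \qquad 3 \beta Z (\bar z - \bar w) = \frac{(Z-W)(Z^2+ZW+W^2)}{4},
\end{equation*}
where $\bar z - \bar w = \rho_- + \rho_+ \geq 2c > 0$ on $\mathcal K$. Near $W = Z = 0$ the implicit function theorem produces two smooth local branches: the 1-shock branch, parametrized by $W$ with $\sigma$ close to $w_-$, and the 2-shock branch, parametrized by $Z$ with $\sigma$ close to $z_-$. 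On the 1-shock branch, eliminating $\alpha = \beta Z/W$ from the second equation and matching orders yields $Z = W^3/(48 \rho_-^2) + O(W^4)$ and the corresponding expansion for $\alpha$, from which the asserted quantitative bounds on $|z_+-z_-|$ and on $|\sigma - (w_++w_-)/2|$ in (a) follow.

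Second, I would compute the leading asymptotics of $d_E$ using the explicit polynomial forms $\eta_E = (z^3-w^3)/12$ and $q_E = (z^4 - w^4)/16$, which one derives from $\eta_E = \rho u^2/2 + \rho^3/6$, $q_E = u(\eta_E + p)$, $p = \rho^3/3$, after the substitution $\rho = (z-w)/2$, $u = (w+z)/2$. Substituting the Step 1 expansions into $d_E = [q_E] - \sigma [\eta_E]$ and rewriting everything via the relation $\beta Z = \alpha W$, the linear and quadratic contributions in $W$ cancel identically (a direct manifestation of the Rankine-Hugoniot normalization of $\sigma$), and one is left with the cubic leading term
\begin{equation*}
d_E = \frac{\rho_-}{24}\, W^3 + O(W^4).
\end{equation*}
The admissibility requirement $d_E \leq 0$ then forces $W \leq 0$, i.e.\ $w_+ < w_-$, and produces the cubic lower bound $|d_E| \geq c_* |W|^3$ for small $|W|$, with $c_*$ depending only on $c$, $M$. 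The symmetric analysis on the 2-shock branch gives case (b); for admissible shocks whose strength is not small, the inequalities follow from compactness of $\mathcal K$ and the strict sign of $d_E$, absorbing the extra constants into $C$.

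The main technical step is the exact cancellation of the sub-cubic contributions to $d_E$ and the identification of the sign of the cubic coefficient, which is the classical second-order osculation of shock and rarefaction curves at a base state and is where the genuine nonlinearity of both characteristic fields is essentially used. Once this cancellation is carried out, the rest of the argument -- from local expansions near each base state to uniform estimates on all admissible shocks with endpoints in $\mathcal K$ -- is a routine compactness and continuity argument, and we simply invoke the standard derivation available in \cite[Chapter 5]{Bbook} for the uniformization.
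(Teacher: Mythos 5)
Your sketch fills in, essentially in the standard way, the classical second-order osculation argument that the paper delegates entirely to \cite[Chapter 5]{Bbook}. The reduction of Rankine--Hugoniot to the two relations $\beta Z = \alpha W$ and $\alpha W(\bar z - \bar w) = (Z^3-W^3)/12$ is correct, the induced local parametrizations of the two shock branches near $W=Z=0$ are set up properly, and the leading coefficient $d_E = \tfrac{\rho_-}{24}W^3 + O(W^4)$ (for your $[q_E]-\sigma[\eta_E]$ normalization with $\eta_E=(z^3-w^3)/12$, $q_E=(z^4-w^4)/16$) is indeed what a direct computation yields once one inserts $Z=O(W^3)$ and $\alpha=O(W^2)$ and verifies the cancellation at orders $W$ and $W^2$. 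The compactness step for non-small shocks is standard and fine.

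There is, however, one point you assert rather than prove, and it hides a genuine discrepancy with the statement. Your own expansion gives
\[
\alpha \;=\; \sigma-\frac{w_++w_-}{2} \;=\; -\frac{W^2}{12(\bar z-\bar w)}+O(W^4),
\]
and since $\bar z-\bar w=\rho_++\rho_-\ge 2c>0$ uniformly on $\mathcal K$, the coefficient of $W^2$ is bounded away from zero. Thus $|\sigma-\tfrac{w_++w_-}{2}|\sim |W|^2$ for small 1-shocks, \emph{not} $O(|W|^3)$: the cubic bound on the speed stated in case (a) of the lemma cannot hold near $W=0$ and the exponent $3$ there must be a typo for $2$ (there is a second, obvious, typo in the same line: $(w_+-w_-)^3<0$ for admissible 1-shocks, so as printed the inequality compares a nonnegative quantity with a negative one). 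You write that the ``asserted quantitative bounds on $|z_+-z_-|$ and on $|\sigma-(w_++w_-)/2|\dots$ follow'' from the expansion, but for the speed deviation the asserted bound in fact contradicts the expansion you produced; a correct write-up should flag this rather than silently ``match orders.'' The discrepancy is harmless for the paper's downstream use (Lemma~\ref{lemma:signedinv} only needs $|\widetilde\mu_0|\lesssim |\mu_{\eta_E}|$, i.e.\ $|W|\cdot|\alpha|\lesssim |W|^3$, which the true quadratic bound on $\alpha$ supplies), but it is worth a remark.
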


 Next, we use the structure of the shock to deduce that it is possible to choose the measures  $\mu_1, \sigma_1$ in \eqref{eq:zchi}, \eqref{eq:wchi} with a definite sign.

\begin{lemma}\label{lemma:signedinv}
    Let $\bs u \in \mathbf L^\infty([0,T] \times \mathbb R, \mathcal K)$ be an entropy solution of \eqref{eq:iso}. Then the Riemann invariants satisfy
    \begin{equation}\label{eq:zchi1}
        \partial_t \chi  + v \partial_x \chi = \mu_0 + \partial_v \mu_1, \qquad \chi(t,x,v) := \begin{cases}
            1 & \text{if $ v \leq z(t,x)$}\\
            0 & \text{otherwise}
        \end{cases}
    \end{equation}
    \begin{equation}\label{eq:wchi1}
        \partial_t \psi + v \partial_x \psi = \sigma_0 + \partial_v \sigma_1, \qquad \psi(t,x,v) := \begin{cases}
            1 & \text{if $v \leq w(t,x)$}\\
            0 & \text{otherwise}
            \end{cases}
    \end{equation}
    where $\mu_1, \sigma_1$ are signed measures.
\end{lemma}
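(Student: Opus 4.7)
The plan is to combine the Lagrangian representation machinery of Section \ref{sec:lagrangian} with a structural analysis of admissible shocks of the $\gamma = 3$ system to exhibit $\mu_1, \sigma_1$ as nonnegative measures. First, I apply Proposition \ref{prop:goodpairs} to choose the pairs $(\mu_0, \mu_1)$ and $(\sigma_0, \sigma_1)$ in \eqref{eq:zchi1}, \eqref{eq:wchi1} to be minimal for the order $\preceq$ of Definition \ref{defi:relation}, so that each pair is simultaneously induced by good Lagrangian representations $\bs \omega_h^z, \bs \omega_e^z$ of the hypograph/epigraph of $z$ and $\bs \omega_h^w, \bs \omega_e^w$ of those of $w$. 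By Theorem \ref{thm:1drect} together with the discussion following Theorem \ref{thm:isereg}, $\mu_1$ and $\sigma_1$ are then concentrated on $\mathbf S \times \mathbb R$, with $\mathscr H^1$-almost every point of $\mathbf S$ being an admissible shock as in Lemma \ref{lemma:shockstru}.

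The key structural step is to show that $[z] := z^+ - z^- \leq 0$ and $[w] := w^+ - w^- \leq 0$ at every admissible shock. For $2$-shocks the bound $[z] \leq 0$ is part of Lemma \ref{lemma:shockstru}(b); substituting $\sigma = (z^+ + z^-)/2 + O(|[z]|^3)$ into \eqref{eq:RHiso} and matching orders in $[z]$ gives
\begin{equation*}
[w] \;=\; -\frac{[z]^3}{12(z^- - w^-)^2} \,+\, O(|[z]|^4),
\end{equation*}
which is strictly negative for small-amplitude $2$-shocks; the analogous expansion holds on $1$-shocks by the symmetric statement of Lemma \ref{lemma:shockstru}(a). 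For shocks of arbitrary amplitude, a direct manipulation of \eqref{eq:RHiso} shows that $[z] = 0$ together with $[w] \neq 0$ would force $[w]^3 = 0$, a contradiction; therefore along each connected branch of the Hugoniot locus the signs of $[z]$ and $[w]$ cannot change.

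Finally, I translate this into monotonicity of the Lagrangian curves. By Remark \ref{eq:intcurv} applied to the construction of $\bs \omega_h^z$ via Smirnov's theorem, on any neighborhood where $\mu_1 = 0$ the curves $\gamma$ are tangent to the vector field $(1, \gamma^v, 0)$, so $\gamma^v$ is locally constant; by Theorem \ref{thm:1drect} this applies off $\mathbf S$. At a shock point, the contraction $\{v \leq z^+\} \subset \{v \leq z^-\}$ of the hypograph forces any jump of $\gamma^v$ to be downward. Combining these two facts, $\gamma^v$ is non-increasing along $\bs \omega_h^z$-a.e.\ curve, and formula \eqref{eq:mu1def} immediately yields $\mu_1^\gamma \geq 0$, hence $\mu_1 = \int \mu_1^\gamma \dif \bs \omega_h^z(\gamma) \geq 0$. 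The identical argument using $\bs \omega_h^w$ yields $\sigma_1 \geq 0$. The main obstacle I anticipate is rigorously ruling out upward jumps of $\gamma^v$ at shocks for the minimal pair: the Smirnov decomposition only controls tangent directions in an $L^1$ sense, and one must argue that minimality with respect to $\preceq$ is incompatible with any $\bs\omega_h^z$-positive-measure set of curves producing an upward jump at a point where the structural sign $[z]\leq 0$ already forces the alternative downward transition to be available.
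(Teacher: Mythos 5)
Your proposal takes a genuinely different route from the paper's, and the final step has a real gap.

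The paper's proof is constructive and never attempts to show that the \emph{minimal} pair is signed. It first uses Theorem~\ref{thm:1drect} to localize $\mu_1,\sigma_1$ on $\mathbf S\times\mathbb R$, then \emph{replaces} the minimal pair on the jump set by an explicit re-decomposition: on $1$-shocks it writes the $w$-dissipation $\mathbf 1_{(w^+,w^-)}(v)(v-\sigma)\mathscr H^1$ as $\partial_v\wt\sigma_1 + \wt\sigma_0$ with $\wt\sigma_1 = \mathbf 1_{(w^+,w^-)}(v)\bigl(f(v)-f(w^+)-\bar\sigma(v-w^+)\bigr)\mathscr H^1$ and $\bar\sigma=(w^++w^-)/2$. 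The parabola vanishes at both endpoints precisely because $\bar\sigma$ is the average, so no boundary atoms are needed, and it is nonnegative because $w^+<w^-$. The correction $(\bar\sigma-\sigma)\mathbf 1_{(w^+,w^-)}\mathscr H^1$ and the \emph{entire} $z$-dissipation on $1$-shocks (whose interval has length $O(|w^+-w^-|^3)$) are dumped into $\wt\sigma_0,\wt\mu_0$, which are finite because they are bounded by the cubic entropy dissipation of Lemma~\ref{lemma:shockstru}. The sign of the small jump ($z^\pm$ on $1$-shocks, $w^\pm$ on $2$-shocks) is irrelevant; only its size matters. This trades $\mu_0$-mass for a positive $\mu_1$ and deliberately leaves minimality.

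Your structural claim that $[z]\leq 0$ \emph{and} $[w]\leq 0$ at every admissible shock is actually true (manipulating \eqref{eq:RHiso} gives $(b-a)^2(b^2+ab+a^2)=12\,ab\,(\rho_++\rho_-)^2$ with $a=[w]$, $b=[z]$, hence $ab>0$ for any nonzero shock), but your displayed asymptotics carry a sign error: with $[z]<0$ the correct expansion is $[w]\sim [z]^3/\bigl(12(\rho_++\rho_-)^2\bigr)<0$, whereas your formula $[w]=-[z]^3/(12(z^--w^-)^2)+O$ would be positive, contradicting your own stated conclusion. More seriously, this claim is stronger than what the lemma requires, and the step where you use it is where the proof breaks.

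The gap you flag at the end is genuine and, as far as I can see, not repairable along the lines you propose. Deducing from the hypograph contraction $\{v\leq z^+\}\subset\{v\leq z^-\}$ that $\gamma^v$ jumps downward for $\bs\omega_h^z$-a.e.\ curve is circular: in a good Lagrangian representation $|\mu_1|=\int|\mu_1^\gamma|\dif\bs\omega$, so the statement that no positive-$\bs\omega$ set of curves jumps upward at the shock is \emph{equivalent} to $\mu_1^-\llcorner(\{(t,x)\}\times\mathbb R)=0$, which is exactly what you want to prove. Nothing in the definition of a Lagrangian representation forbids curves from jumping up while others jump down more, as long as the cancellations remain controlled; and minimality under $\preceq$ (which minimizes $\mu_1^+$ and $\mu_1^-$ separately, not their difference) does not obviously exclude a nontrivial $\mu_1^-$ concentrated on the jump set. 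Indeed the pair the paper ends up with is \emph{not} the minimal one, and it is unclear that the minimal pair even enjoys the sign you want. You should abandon the curve-monotonicity route and instead make the paper's explicit choice of decomposition on each family of shocks; once you do that, your structural computation on the Rankine--Hugoniot locus is not needed and Lemma~\ref{lemma:shockstru} suffices.
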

\begin{proof}
    For almost every point $(t,x) \in \mathbf S$, \eqref{eq:RHiso} and \eqref{eq:dissiso} are satisfied, where $z_+, z_-$ and $w_+, w_-$ are the left and right traces on $\mathbf S$, which exist thanks to Theorems \ref{thm:dimdthm}, \ref{prop:wzfes}.
By (3) of Theorem \ref{thm:dimdthm} we can characterize the distributions on the right hand side of \eqref{eq:zchi}, \eqref{eq:wchi}. 
\begin{equation}
    \langle \mu_0 \llcorner \mathbf J_w + \partial_v \mu_1 \llcorner \mathbf J_w, \eta^\prime \varphi\rangle  =  \int_{[0,T] \times \mathbb R} \int_{w_-}^{w_+} \eta^\prime(v)(v- \sigma) \dif v \dif \mathscr H^1 \llcorner \mathbf J_w,
\end{equation}
\begin{equation}
    \langle \sigma_0 \llcorner \mathbf J_z + \partial_v \sigma_1 \llcorner \mathbf J_z, \eta^\prime \varphi\rangle  =  \int_{[0,T] \times \mathbb R} \int_{z_-}^{z_+} \eta^\prime(v)(v- \sigma) \dif v \dif \mathscr H^1 \llcorner \mathbf J_z.
\end{equation}
Here as in the proof of Theorem \ref{thm:isereg} $\mu_i, \sigma_i$ are chosen to be minimal with respect to the relation $\preceq$ and $\mathbf J_w, \mathbf J_z$ are the corresponding jump sets \eqref{eq:setJ}.
We claim that up to an $\mathscr H^1$-negligible set there holds
\begin{equation}\label{eq:SJwJz}
\mathbf S = \mathbf J_w = \mathbf J_z.
\end{equation}
In fact, by definition of $\mathbf S$ we have $\mathbf J_w \cup \mathbf J_z = \mathbf S$. Since both $w, z$ admit left and right strong traces on $\mathbf S$ (see \cite{Va01}), we deduce that on $\mathscr H^1$-almost every point of $\mathbf S$ the Rankine-Hugoniot conditions \eqref{eq:RHiso} hold. In particular, the following holds:
\begin{equation}\label{eq:wiffz}
w^-(x) \neq w^+(x) \quad \Leftrightarrow \quad z^-(x) \neq z^+(x) \quad \text{for $\mathscr H^1$-a.e. $x \in \mathbf S$}.
\end{equation}
By the representation formula (3) of Theorem \ref{thm:dimdthm}, we deduce $w^- \neq w^+$ for $\mathscr H^1$-a.e. in $\mathbf J_w$, which then implies by \eqref{eq:wiffz} that $z^- \neq z^+$ for $\mathscr H^1$-a.e. in $\mathbf J_w$. But then $\mathscr H^1$-a.e. point $x \in \mathbf J_w$ cannot be a vanishing mean oscillation points for $z$, and therefore by Theorem \ref{thm:dimdthm} they are contained in $\mathbf J_z$ up to $\mathscr H^1$-negligible subsets: this shows $\mathbf J_w \subset \mathbf J_z$. The converse inclusion is proved symmetrically, and this ultimately proves the claim \eqref{eq:SJwJz}.

We partition $\mathbf S$ into the 1-shocks $\mathbf S^1$ and 2-shocks $\mathbf S^2$ accordingly to Lemma \ref{lemma:shockstru}.
In particular, say for the $1$-shocks, we have 
$$
\begin{aligned}
\mu_0 \llcorner \mathbf S^1 + \partial_v \mu_1 \llcorner \mathbf S^1 & = \mathbf 1_{(w^+, w^-)}(v) (v-\sigma) \mathscr H^1 \llcorner \mathbf S^1 \\
& = \mathbf 1_{(w^+, w^-)}(v) (v-\bar \sigma) \mathscr H^1 \llcorner \mathbf S^1  +\mathbf 1_{(w^+, w^-)}(v) (\bar \sigma- \sigma) \mathscr H^1 \llcorner \mathbf S^1 \\
& = : \partial_v \wt \mu_1 + \wt \mu_0
\end{aligned}
$$
where $\bar \sigma = (w^+ + w^-)/2$ and 
$$
\wt \mu_1 = \mathbf 1_{(w^+, w^-)}(v) (f(v)- f(w^+) - \bar \sigma(v- w^+)) \mathscr H^1 \llcorner \mathbf S^1 
$$
where $f(v) = v^2/2$. Moreover
$$
\begin{aligned}
    \sigma_0\llcorner \mathbf S^1  + \partial_v \sigma_1 \llcorner \mathbf S^1 & = \mathbf 1_{(z^-, z^+)}(v)(v-\sigma) \mathscr H^1 \llcorner \mathbf S^1 =: \wt \sigma_0
\end{aligned}
$$
Lemma \ref{lemma:shockstru} immediately implies that $\wt \mu_1$ has a positive sign, and that $\wt \mu_0, \wt \sigma_0$ are locally finite measures since they  satisfy, for some constant $C> 0$, 
$$
|\wt \mu_0|, |\wt \sigma_0| \leq C |\mu_{\eta_E}| \qquad \text{as measures}
$$
where $\eta_E$ is the energy dissipation.
Therefore we have on 1-shocks that 
$$
\mu_0 \llcorner \mathbf S^1 + \partial_v \mu_1 \llcorner \mathbf S^1 = \partial_v \wt \mu_1 + \wt \mu_0, \qquad \sigma_0 \llcorner \mathbf S^1 + \partial_v \sigma_1 \llcorner \mathbf S^1 = \wt \sigma_0
$$
where $\wt \mu_1$ is a positive measure, and $\wt \mu_0, \wt \sigma_0$ are locally finite measures.  An entirely symmetric decomposition holds for 2-shocks, and this concludes the proof.
\end{proof}

\begin{coro}
    Let $\bs u =(\rho, m) \in \mathbf L^\infty([0,T] \times \mathbb R; \mathcal K)$ be an entropy solution of \eqref{eq:iso}. Then $(\rho, m) \in B^{1/2, 1}_{\infty, loc}$.
\end{coro}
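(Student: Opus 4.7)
The plan is to reduce the claim to Theorem~\ref{thm:besov} applied to each of the two Riemann invariants $w = m/\rho - \rho$ and $z = m/\rho + \rho$, and then transfer the Besov regularity back to the conserved variables $(\rho, m)$. First, I would invoke Proposition~\ref{prop:wzfes} to conclude that $w$ and $z$ are quasi-entropy solutions to the Burgers equation. Next, I would use Lemma~\ref{lemma:signedinv} to produce, for each of $w$ and $z$, a kinetic representation in which the measure $\mu_1$ (respectively $\sigma_1$) has a constant sign. These two facts together verify the hypotheses of Theorem~\ref{thm:besov}.

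Applying Theorem~\ref{thm:besov} separately to $w$ and to $z$, I obtain that for every $T>0$, every $\delta \in (0, T)$ and every compact $K \subset \mathbb R$ there is a constant $C=C(K,T,\mu_0,\sigma_0)$ with
\begin{equation*}
\int_\delta^T \|w(t)\|_{B^{1/2,1}_\infty(K)}\,\dif t + \int_\delta^T \|z(t)\|_{B^{1/2,1}_\infty(K)}\,\dif t \;\leq\; \frac{C}{\min\{\delta,1\}}.
\end{equation*}
Since the inverse change of coordinates reads $\rho = (z-w)/2$, this immediately transfers to the same integrated Besov bound for $\rho$ by the triangle inequality.

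For the momentum, I would use the identity $m = \rho \cdot (m/\rho) = (z-w)(z+w)/4 = (z^2 - w^2)/4$. The assumption $\bs u \in \mathcal K$ makes $w, z$ uniformly bounded by some constant $M$, so
\begin{equation*}
|m(t,x+h) - m(t,x)| \;\leq\; \tfrac{M}{2}\bigl(|z(t,x+h)-z(t,x)| + |w(t,x+h)-w(t,x)|\bigr).
\end{equation*}
Dividing by $h^{1/2}$, integrating over $x \in K$ and $t \in (\delta,T)$, and taking the supremum over $h>0$ yields the required Besov estimate for $m$ with the same form of constant.

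The proof is essentially assembly of earlier results, so there is no real obstacle; the only substantive point to verify is that the sign hypothesis on $\mu_1$ in Theorem~\ref{thm:besov} is genuinely available for the Riemann invariants in this setting, which is exactly the content of Lemma~\ref{lemma:signedinv} and relies on the cubic dissipation estimate for admissible shocks established in Lemma~\ref{lemma:shockstru}.
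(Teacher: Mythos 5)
Your proposal is correct and follows essentially the same route as the paper's one-line proof (combine Theorem~\ref{thm:besov} with Lemma~\ref{lemma:signedinv}); you simply spell out the steps the paper leaves implicit, in particular the algebraic identities $\rho=(z-w)/2$, $m=(z^2-w^2)/4$ and the use of the uniform $L^\infty$ bound on $\mathcal K$ to pass the fractional estimate from $(w,z)$ to $(\rho,m)$.
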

\begin{proof}
    The proof follows immediately by combining Theorem \ref{thm:besov} and Lemma \ref{lemma:signedinv}.
\end{proof}

\noindent {\bf Acknowledgements.}  
F.~Ancona, E.~Marconi and L.~Talamini are members of GNAMPA of the ``Istituto Nazionale di Alta Matematica
F.~Severi". F.~Ancona and E.~Marconi are partially supported by the PRIN
Project 20204NT8W4 “Nonlinear evolution PDEs, fluid dynamics and transport
equations: theoretical foundations and applications”,
and by the PRIN 2022 PNRR Project P2022XJ9SX ``Heterogeneity on the road - modeling, analysis, control''.
E.~Marconi  is also partially supported by H2020-MSCA-IF “A~Lagrangian approach: from conservation laws to line-energy Ginzburg-Landau models”. Luca Talamini is partially supported by the GNAMPA - Indam Project 2025 \say{Rappresentazione lagrangiana per sistemi di leggi di conservazione ed equazioni cinetiche.}

\vspace{0.5cm}
\noindent {\bf Declaration of interests.}  
The authors report there are no competing interests to declare.

\vspace{1cm}

\end{document}